\documentclass[12pt,fleqn]{article}

\usepackage{todonotes}
{}

\usepackage[english]{babel}

\usepackage[utf8]{inputenc}
\usepackage[T1]{fontenc}

\usepackage{mathtools}
\usepackage{amsfonts,amsmath, amsthm,amssymb,esint}
\usepackage{dsfont}

\allowdisplaybreaks

\usepackage{Alegreya}
\usepackage{float}
\usepackage{authblk}
\usepackage{tikz}
\usepackage{color}
\definecolor{DB}{rgb}{0.0,0.0,0.8} 
\definecolor{DG}{rgb}{0.0,0.55,0.14}
\definecolor{DR}{rgb}{0.5,0,0.07}

\usepackage[hyperindex=true,colorlinks=true, urlcolor={DB}, linkcolor={DB}, menucolor={DB}, citecolor={DG}, anchorcolor={DG},linktoc=all]{hyperref}

\usepackage{url}

\usepackage{amsrefs}

\usepackage{bbding}
\newcommand{\verti}[1]{{\left\vert #1 \right\vert}}
\newcommand{\vertii}[1]{{\left\vert\kern-0.25ex\left\vert #1 \right\vert\kern-0.25ex\right\vert}}  
\newcommand{\vertiii}[1]{{\left\vert\kern-0.25ex\left\vert\kern-0.25ex\left\vert #1 \right\vert\kern-0.25ex\right\vert\kern-0.25ex\right\vert}}

\def\bt{\begin{theo}}
\def\et{\end{theo}}
\def\bpr{\begin{prop}}
\def\epr{\end{prop}}
\def\bl{\begin{lemma}}
\def\el{\end{lemma}}
\def\bc{\begin{coro}}
\def\ec{\end{coro}}
\def\br{\begin{rema}}
\def\er{\end{rema}}
\def\bp{\begin{proof}}
\def\ep{\end{proof}}

\numberwithin{equation}{section}

\usepackage{mathrsfs}
\usepackage{stmaryrd}

\usepackage{graphicx}

\usepackage[margin=25mm]{geometry}

\def\R{{\mathbb R}}
\def\N{{\mathbb N}}

\def\Z{{\mathbb Z}}

\def\fo{\forall\,}

\def\bes{\begin{equation*}}
\def\ees{\end{equation*}}
\def\be{\begin{equation}}
\def\ee{\end{equation}}
\def\ba{\begin{aligned}}
\def\ea{\end{aligned}}

\def\supp{\operatorname{supp}}

\usepackage{enumerate}
\def\ben{\begin{enumerate}}
\def\een{\end{enumerate}}

\theoremstyle{definition}
\newtheorem{prop}{Proposition}[section]
\newtheorem{theo}[prop]{Theorem}
\newtheorem{coro}[prop]{Corollary}
\newtheorem{lemma}[prop]{Lemma}
\theoremstyle{definition}

\theoremstyle{definition}
\newtheorem{rema}[prop]{Remark}
\theoremstyle{definition}
\newtheorem{defi}[prop]{Definition}

\def\ep{\end{proof}}
\def\bp{\begin{proof}}

\newcommand\blfootnote[1]{%
  \begingroup
  \renewcommand\thefootnote{}\footnote{#1}%
  \addtocounter{footnote}{-1}%
  \endgroup
}

\usepackage[autostyle=true]{csquotes}
\title{Existence of extremal functions in higher-order affine Sobolev inequalities}
\author[Tristan]{Tristan Bullion-Gauthier*}
\date{}

\begin{document}
\maketitle
\begin{abstract}
    In this article, we prove the existence of extremal functions in higher-order affine Sobolev inequalities. Proofs rely on concentration-compactness methods in spaces of integer or fractional regularity. The tools we use, available in spaces of arbitrary regularity, might be of independent interest.
\end{abstract}
\blfootnote{*Universite Claude Bernard Lyon 1, CNRS, Centrale Lyon, INSA Lyon, Université Jean Monnet, ICJ UMR5208, 69622 Villeurbanne, France.  
\url{bullion@math.univ-lyon1.fr}}
\blfootnote{Keywords: Affine Sobolev semi-norms; Sobolev inequalities; concentration-compactness}
\blfootnote{MSC 2020 classification: 46E35}
\blfootnote{Acknowledgments. I thank \'Oscar Dom\'inguez and Petru Mironescu for their mathematical advice and suggestions to improve the presentation.}

\section{Introduction}
This is a follow-up of \cite{bullion2026higher}, where we have addressed the question of the validity of higher-order affine Sobolev inequalities. These inequalities involve the \enquote{affine} energies defined by 
\bes
 \mathscr{E}_{s,p}(f):=\left(\int_{\mathbb{S}^{N-1}} \left(\int_{0}^{\infty}t^{-sp-1} \vertii{\Delta^{\lfloor s \rfloor +1}_{t\xi}f}_{L^p(\R^N)}^p \, dt\right)^{-N/sp}\,  d\mathscr{H}^{N-1}(\xi) \right)^{-s/N},
\ees
 if $s>0$ is non-integer, respectively
\bes
    \mathscr{E}_{s,p}(f):= \left(\int_{\mathbb{S}^{N-1}} \left(\int_{\R^N}\verti{\partial^s_{\xi} f(x)}^p \ dx\right)^{-N/sp} d \mathscr{H}^{N-1}(\xi)\right)^{-s/N}, 
    \ees
if $s\ge 1$ is an integer. 

One of the contributions in \cite{bullion2026higher} asserts that, for each $s>0$ and $1\leq p<\infty$ satisfying $sp<N$, with $p>1$ if $s\geq 2$ is an integer, there exists $S>0$ such that 
\be \label{aff sob}
S \vertii{f}_{L^{q}(\R^N)} \leq \mathscr{E}_{s,p}(f)\text{,} \ \fo f \in L^{q}(\R^N) \cap \dot{W}^{s,p}(\R^N),
\ee
 where $q\coloneq Np/(N-sp)$. Note that \eqref{aff sob} is a strengthened version of the classical Sobolev inequality that is invariant under affine transformations. In the following, we implicitly assume that $S$ is the best constant in \eqref{aff sob}. In some sense, \eqref{aff sob} is a generalization of the remarkable results of Lutwak, Yang, and Zhang \cites{zhang1999affine,lutwak2002sharp}, and of their fractional analogues obtained by Haddad and Ludwig  \cites{haddad2024affine,haddad2025affine} corresponding, respectively, to  $s=1$ and to $0<s<1$. A major difference between \cite{bullion2026higher} and these works is that, in \cite{bullion2026higher}, the value of $S$ is not explicit. 

  In the case where $0<s\leq 1$, rearrangement or convex geometry techniques were used in \cites{haddad2024affine,haddad2025affine,lutwak2002sharp} to characterize extremal functions in \eqref{aff sob} (for which there is equality in \eqref{aff sob}). Such tools are unavailable when $s>1$ and extremal functions may \textit{a priori} not exist. The goal of this article is to prove the existence of extremal functions in \eqref{aff sob} in the general setting $s>0$.
  
More specifically, we prove the following. 
\begin{theo}\label{exist sob}
    Let $s>0$ and $1\leq p<\infty$ be such that $sp<N$. When $s$ is an integer, we assume that $p>1$. There exists $f \in \dot{W}^{s,p}(\R^N)$ such that  $\vertii{f}_{L^{q}(\R^N)}=1$ and 
    $\mathscr{E}_{s,p}(f)=S$.
    
    Moreover, when $p>1$, minimizing sequences for $ \inf\{\mathscr{E}_{s,p}(g); \ \vertii{g}_{L^{q}(\R^N)}=1 \}$ are relatively compact in $\dot{W}^{s,p}$ up to unimodular transformations, translations and dilations.

\end{theo}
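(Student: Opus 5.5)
The plan is to run Lions' concentration-compactness on a normalized minimizing sequence, after first using the affine invariance to bring the sequence into a bounded subset of $\dot{W}^{s,p}$. Three facts about $\mathscr{E}_{s,p}$ are used repeatedly.
\begin{itemize}
\item It is invariant under $f\mapsto f\circ A$ for $A\in SL(N)$, and under translations.
\item It scales like $\vertii{\cdot}_{L^q}$ under dilations $f\mapsto \lambda^{(N-sp)/p}f(\lambda\,\cdot)$.
\item It is dominated by the usual (direction-averaged) $\dot W^{s,p}$ seminorm. In the other direction, by Hölder and the definition as a negative-power mean, $\mathscr{E}_{s,p}(f)^p$ is comparable to $\inf_{A\in SL(N)}\vertii{f\circ A}_{\dot W^{s,p}}^p$ up to constants depending only on $N,s,p$. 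This is the analogue of the Lutwak--Yang--Zhang observation: the "directional norm body" can be put into isotropic position.
\end{itemize}

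\textbf{Step 1 (normalization).} Take $(f_n)$ with $\vertii{f_n}_{L^q}=1$ and $\mathscr{E}_{s,p}(f_n)\to S$. By the comparability above, choose $A_n\in SL(N)$ such that $g_n:=f_n\circ A_n$ satisfies $\sup_n\vertii{g_n}_{\dot W^{s,p}}<\infty$. The map $A_n$ is chosen to make the function $\xi\mapsto \vertii{\partial_\xi^s g_n}_p$ (or its fractional analogue) almost isotropic. The normalization $\vertii{g_n}_{L^q}=1$ and the value of the energy are unchanged.

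\textbf{Step 2 (profile extraction).} Apply concentration-compactness to the bounded sequence $(g_n)$ in $\dot W^{s,p}$. Concretely, I would use a profile decomposition or a refined Sobolev inequality. The refined inequality has the form $\vertii{g}_{L^q}\lesssim \vertii{g}_{\dot W^{s,p}}^{\theta}\vertii{g}_{\dot B^{-N/q}_{\infty,\infty}}^{1-\theta}$, and it holds for every $s>0$, including fractional $s$ and $p=1$ in the non-integer case. It shows that vanishing of all profiles would force $\vertii{g_n}_{L^q}\to0$, which is a contradiction. Hence, after translations $x_n$ and dilations $\lambda_n$, the rescaled functions $h_n$ converge weakly (weak-$*$ in $BV$-type spaces when $p=1$) to some $h\ne0$, and also converge a.e.

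\textbf{Step 3 (no dichotomy).} This step uses the Brezis--Lieb lemma in $L^q$, which gives $1=\vertii{h}_q^q+\vertii{h_n-h}_q^q+o(1)$. It also needs an asymptotic superadditivity of the energy,
\bes
\mathscr{E}_{s,p}(h_n)^p\ \ge\ \mathscr{E}_{s,p}(h)^p+\mathscr{E}_{s,p}(h_n-h)^p+o(1),
\ees
which I expect to be the \textbf{main obstacle}. My approach would be as follows. First, derive the Brezis--Lieb splitting for each direction $\xi$, for $\vertii{\partial_\xi^s h_n}_p^p$ or for the fractional directional quantity. For $p>1$ this comes from a.e. convergence of the differences, which in turn follows from local compactness; for fractional $p=1$ one uses only the liminf inequality. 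Next, pass to the negative-power mean over $\xi$. The needed inequality is the reverse-Minkowski (concavity) property of $a\mapsto (\int a(\xi)^{-N/sp}d\xi)^{-sp/N}$ on positive functions, which is superadditive. Fatou's lemma handles the uniformity in $\xi$. Combining with the inequality $S^p t^{p/q}\le \mathscr{E}^p$ and the strict concavity of $t\mapsto t^{p/q}$ (using $p<q$), the splitting forces either $\vertii{h}_q=1$ or $h=0$. The latter is excluded by Step 2, so $\vertii{h}_q=1$.

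\textbf{Step 4 (conclusion).} By weak lower semicontinuity of $\mathscr{E}_{s,p}$ (Fatou in $t$ and $x$ plus the superadditivity argument), $\mathscr{E}_{s,p}(h)\le S$. Hence $h$ is an extremal function. When $p>1$, we have strong convergence $h_n\to h$ in $L^q$ together with equality in the superadditivity. This gives $\mathscr{E}_{s,p}(h_n-h)\to0$. By the comparability in the third bullet, together with the uniform ellipticity coming from the isotropic normalization of Step 1, it follows that $\vertii{h_n-h}_{\dot W^{s,p}}\to0$, possibly after a further bounded $SL(N)$ correction; one can extract a convergent subsequence of these corrections by compactness. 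The result is relative compactness up to unimodular maps, translations and dilations.
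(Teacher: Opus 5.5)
Your fractional-case argument is essentially the paper's: isotropic normalization via Lemma \ref{equili}, a nonzero weak limit from the improved Sobolev inequality, directional Brezis--Lieb, reverse Minkowski, and strict concavity of $t\mapsto t^{p/q}$. Replacing the uniform-in-$\xi$ Lemma \ref{uniform BL} by a pointwise-in-$\xi$ Brezis--Lieb plus dominated convergence on $\mathbb{S}^{N-1}$ is legitimate. It does require the uniform positive lower bounds on the directional energies of $h_n$ (from the normalization) and of $h\neq 0$ (Lemma \ref{no constant direction}). Two small repairs are needed. For $p=1$ a liminf inequality does not give the superadditivity, but the Brezis--Lieb lemma holds for $p=1$, so use it there too. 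Also, only the bound $\mathscr{E}_{s,p}\lesssim\verti{\cdot}_{W^{s,p}}$ follows from H\"older; the isotropic lower bound is the nontrivial Lemma \ref{equili}.

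\textbf{First gap: Step 3 for integer $s$.} The splitting $\vertii{\partial^s_\xi h_n}_{L^p}^p=\vertii{\partial^s_\xi h}_{L^p}^p+\vertii{\partial^s_\xi (h_n-h)}_{L^p}^p+o(1)$ needs a.e. convergence of $\partial^s_\xi h_n$, and local compactness does not give it.
\begin{itemize}
\item Rellich--Kondrachov yields strong $L^p_{loc}$ convergence only for derivatives of order $<s$; the top-order ones converge merely weakly.
\item For $p\neq 2$, weak convergence does not give the splitting. Think of $\partial^s_\xi(h_n-h)$ oscillating between $\pm1$ on a set where $\partial^s_\xi h=1$.
\item The fractional case escapes this only because $\Delta^{\lfloor s\rfloor+1}_{t\xi}h_n(x)$ involves point values of $h_n$.
\end{itemize}
This is why the paper switches to Lions' method for integer $s$. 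Dichotomy is excluded with cut-offs $\varphi_n$: the difference $\partial^s_\xi(f_n\varphi_n)-\varphi_n\partial^s_\xi f_n$ contains only lower-order derivatives of $f_n$, which vanish in $L^p$ (see \eqref{struw}). Concentration is excluded by the affine second concentration-compactness Lemma \ref{affin CC}, built on defect measures $\mu_\xi$ defined for all $\xi$ at once (Lemma \ref{extract}). The missing identity is replaced by the inequality $\liminf_n\vertii{\partial^s_\xi f_n}_{L^p}^p\geq\vertii{\partial^s_\xi f}_{L^p}^p+\sum_i\mu_\xi(\{x_i\})$, which does survive weak convergence. Via reverse Minkowski it gives $S^p\geq\mathscr{E}_{s,p}(f)^p+\sum_i S^p\nu_i^{p/q}$.

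\textbf{Second gap: Step 4.} From $\mathscr{E}_{s,p}(h_n-h)\to 0$ you cannot infer $\verti{h_n-h}_{W^{s,p}}\to 0$.
\begin{itemize}
\item $\mathscr{E}_{s,p}$ is not equivalent to the seminorm (see the Remark after Lemma \ref{not in 0}).
\item Your isotropic normalization concerns $h_n$, not $h_n-h$.
\item $\mathrm{SL}_N$ maps isotropizing $h_n-h$ may degenerate, so there is no compactness of such ``corrections''.
\end{itemize}
The paper uses Lemma \ref{not in 0} instead. Weak convergence together with $\mathscr{E}_{s,p}(h_n)\to\mathscr{E}_{s,p}(h)$ forces every directional energy of $h_n$ to converge to that of $h$. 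Otherwise, by equicontinuity in $\xi$ and directional lower semicontinuity, $\liminf\mathscr{E}_{s,p}(h_n)$ along a subsequence would exceed $\mathscr{E}_{s,p}(h)$. Uniform convexity of $L^p$, $1<p<\infty$, then yields strong convergence. This needs no splitting, so it also works for integer $s$.

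In the fractional case you could instead exploit your directional splitting. The directional energies of $h$ are bounded, so strict monotonicity of the negative-power mean forces the directional energies of $h_n-h$ to tend to $0$ in measure on $\mathbb{S}^{N-1}$. Being uniformly bounded, they then tend to $0$ in $L^1(\mathbb{S}^{N-1})$, which is exactly $\verti{h_n-h}_{W^{s,p}}\to 0$.
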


Our proof of Theorem \ref{exist sob} in the integer case is based on the concentration-compactness method due to Lions \cites{lions1984concentration,lions1985concentration}. In the fractional case, we rely on a variant of this method presented by Bellazzini, Frank, and Visciglia in   \cite{bellazzini2014maximizers}, and used by Zhang in   \cite{zhang2021optimizers}. It is based on refined Sobolev inequalities (with roots in the works of Gérard \cite{gerard1998description}) and adequate versions of the Brezis-Lieb lemma, the latter depending on the variational problem under consideration (in our case, Lemma \ref{uniform BL} will be instrumental). In both cases, we need refined and affine versions of certain classical results. These results allow us to deal with the \enquote{anisotropic} features of affine energies (see, e.g., Proposition \ref{obser} for an observation on the link between affine and \enquote{anisotropic} Sobolev energies). Some results obtained in this direction might be of independent interest  (see, e.g., Lemmas \ref{uniform BL}  and \ref{affin CC}).

Our paper is organized as follows. In Section \ref{affine sec}, we collect some auxiliary results used in the proof of Theorem \ref{exist sob}. Sections \ref{frac} and \ref{int} are devoted to the proof of Theorem \ref{exist sob} in the fractional and integer cases, respectively. 
\section{Basic facts about affine energies and weak convergence}\label{affine sec}
Otherwise is stated, all function spaces are defined on $\R^N$.
Given $s>0$ non-integer and $1 \leq p<\infty$, we denote by $\dot{W}^{s,p}$ the space of functions $f \in L^1_{loc}$ such that 
\bes
\verti{f}_{W^{s,p}}^p\coloneq \int_{\R^N}\frac{\vertii{\Delta^{\lfloor s \rfloor +1}_h f}_{L^p}^p}{\verti{h}^{sp+N}} \ dh<\infty.
\ees
Given $s$ an integer and $1 \leq p<\infty$, we denote by $\dot{W}^{s,p}$ the space of functions $f \in W^{s,1}_{loc}$ such that 
\bes
\verti{f}_{W^{s,p}}^p\coloneq \int_{\R^N} \vertii{D^s_x f}^p \ dx<\infty,
\ees 
where $\vertii{\eta}\coloneq \sup_{\verti{x_1}\leq 1,\dots, \verti{x_s}\leq 1} \verti{\eta(x_1,\dots,x_s)}$, for each $s$-linear form $\eta$. Given $f\in W^{s,1}_{loc}$ and $\xi \in \mathbb{S}^{N-1}$, we will also denote by $\partial^s_{\xi} f$ the function defined for a.e. $x\in \R^N$ by
\bes
\partial^s_{\xi}f(x) = D_x^sf(\xi,\dots,\xi)= \sum_{\verti{\alpha}=s} \partial^{\alpha} f (x). 
\ees
We now present the main features of affine energies. 
\begin{prop}(\cite[Lemma 3.1, Proposition 3.2]{bullion2026higher}) Let $0<s<\infty$ and $1 \leq p<\infty$.
    \begin{enumerate}[(1)]
        \item For each $f\in \dot{W}^{s,p}$ and $T \in \text{SL}_N$, we have $\mathscr{E}_{s,p}(f\circ T)=\mathscr{E}_{s,p}(f)$.
        \item There exists $\alpha_{s,p,N}<\infty$ such that
        \bes
        \mathscr{E}_{s,p}(f) \leq \alpha_{s,p,N}\verti{f}_{W^{s,p}}, \ \fo f\in \dot{W}^{s,p}.
        \ees
    \end{enumerate}
\end{prop}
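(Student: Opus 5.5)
Both items reduce to a direct computation with the definition of $\mathscr{E}_{s,p}$: item (1) via a change of variables on the sphere, item (2) via H\"older's inequality in $\xi$ followed by polar coordinates.

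\textbf{Plan for (1).} Write $\Phi_f(v)$ for the directional energy of $f$ along an arbitrary nonzero vector $v$:
\bes
\Phi_f(v)=\int_0^\infty t^{-sp-1}\vertii{\Delta^{\lfloor s\rfloor+1}_{tv}f}_{L^p}^p\,dt
\quad\text{or}\quad
\Phi_f(v)=\int_{\R^N}\verti{D^s_xf(v,\dots,v)}^p\,dx.
\ees
In both cases $\Phi_f$ is homogeneous of degree $sp$: $\Phi_f(\lambda v)=\lambda^{sp}\Phi_f(v)$ for $\lambda>0$. This follows from substituting $t\mapsto t/\lambda$ in the fractional case and from multilinearity in the integer case.

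Since $\det T=1$, a change of variables $x\mapsto Tx$ gives $\Phi_{f\circ T}(\xi)=\Phi_f(T\xi)$. We then use the polar-coordinates identity, valid for any $g$ that is homogeneous of degree $-N$:
\bes
\int_{\mathbb{S}^{N-1}} g(\xi)\,d\mathscr{H}^{N-1}(\xi)=N\int_{\{\Phi(x)^{1/sp}\le 1\}}\Phi(x)^{-N/sp}\cdots.
\ees
More concretely, $\int_{\mathbb{S}^{N-1}}\Phi(\xi)^{-N/sp}\,d\mathscr{H}^{N-1}=N\,|\{x:\Phi(x)\le 1\}|$. The set $\{\Phi_{f\circ T}\le1\}$ equals $T^{-1}\{\Phi_f\le 1\}$, and $T^{-1}$ preserves volume. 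Hence the two spherical integrals coincide, which proves (1).

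\textbf{Plan for (2).} Apply H\"older (Jensen) on the sphere with the probability measure $d\mathscr{H}^{N-1}/|\mathbb{S}^{N-1}|$, using the convex function $y\mapsto y^{-N/sp}$. This gives
\bes
\frac{1}{|\mathbb{S}^{N-1}|}\int_{\mathbb{S}^{N-1}}\Phi_f^{-N/sp}\ \ge\ \Big(\frac{1}{|\mathbb{S}^{N-1}|}\int_{\mathbb{S}^{N-1}}\Phi_f\Big)^{-N/sp}.
\ees
Raising to the power $-s/N$ yields
\bes
\mathscr{E}_{s,p}(f)\le |\mathbb{S}^{N-1}|^{-s/N-1/p}\Big(\int_{\mathbb{S}^{N-1}}\Phi_f\Big)^{1/p}.
\ees

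It remains to bound $\int_{\mathbb{S}^{N-1}}\Phi_f$ by a multiple of $\verti{f}_{W^{s,p}}^p$.
\begin{enumerate}[(i)]
\item In the fractional case, polar coordinates $h=t\xi$ give exactly $\int_{\mathbb{S}^{N-1}}\Phi_f=\verti{f}_{W^{s,p}}^p$.
\item In the integer case, $\verti{D^s_xf(\xi,\dots,\xi)}\le\vertii{D^s_xf}$ pointwise for $|\xi|=1$. Therefore $\int_{\mathbb{S}^{N-1}}\Phi_f\le|\mathbb{S}^{N-1}|\,\verti{f}_{W^{s,p}}^p$.
\end{enumerate}

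\textbf{Main obstacle.} The only delicate point is measure-theoretic: $\Phi_f$ may vanish or be infinite on parts of the sphere. We interpret $0^{-N/sp}=\infty$ (then $\mathscr{E}=0$) and $\infty^{-N/sp}=0$. Jensen's inequality still holds with these conventions, and both (1) and (2) go through directly.
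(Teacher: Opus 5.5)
Your proof is correct. The paper gives no argument for this statement; it quotes it from \cite{bullion2026higher}. What you wrote is the standard proof. For (1) you use the degree-$sp$ homogeneity of $\Phi_f$, the relation $\Phi_{f\circ T}(v)=\Phi_f(Tv)$ for $\det T=1$, and the identity $\int_{\mathbb{S}^{N-1}}\Phi^{-N/sp}\,d\mathscr{H}^{N-1}=N\verti{\{\Phi\le 1\}}$. That identity turns the affine energy into the volume of a star body, so $\mathrm{SL}_N$-invariance is immediate. For (2) you use the power-mean (Jensen) inequality followed by polar coordinates. Your conventions for the values $0$ and $\infty$ are adequate for both steps.

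Two small remarks:
\begin{itemize}
\item The first display in your plan for (1), involving $g$ and ``$\cdots$'', is not a well-formed identity and should be deleted. Only the ``more concretely'' identity is used, and that one is correct.
\item In the fractional case, your bound $\mathscr{E}_{s,p}(f)\le \mathscr{H}^{N-1}(\mathbb{S}^{N-1})^{-s/N-1/p}\verti{f}_{W^{s,p}}$ is exactly what the paper's Proposition~\ref{obser} gives with the isotropic choice $\rho(h)=\mathscr{H}^{N-1}(\mathbb{S}^{N-1})^{-1/N}\verti{h}^{-1}$. Your Jensen step is the reverse H\"older inequality \eqref{rev Hold} with $v$ constant. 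So (2) could equally be read off from that proposition, whose proof does not rely on (2).
\end{itemize}
In the integer case Jensen is not even needed: the pointwise bound $\Phi_f(\xi)\le\verti{f}_{W^{s,p}}^p$ on the sphere directly gives $\mathscr{E}_{s,p}(f)\le\mathscr{H}^{N-1}(\mathbb{S}^{N-1})^{-s/N}\verti{f}_{W^{s,p}}$.
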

In relation with item (2) of the previous proposition, we point out that, in general,  $\mathscr{E}_{s,p}$ and $\verti{\cdot}_{W^{s,p}}$ are not equivalent (see \cite[Theorem 2]{haddad2021affine}). However, the affine energy of a function is always equivalent to the Sobolev semi-norm of a suitable transformation of this function, thanks to the following lemma.
\begin{lemma}\label{equili}(\cite[Theorem 1.4]{bullion2026higher})
    Let $s>0$ and $1\leq p<\infty$.  For each $f \in \dot{W}^{s,p}$, there exists $T_f \in \text{SL}_N$ such that 
    \bes
    C\verti{f\circ T_f}_{W^{s,p}}\leq \left(\int_{0}^{\infty} t^{-sp-1} \vertii{\Delta_{t\xi}^{\lfloor s \rfloor+1} (f\circ T_f)}_{L^p}^p \ dt \right)^{1/p}, \ \fo \xi \in \mathbb{S}^{N-1}, 
    \ees
    if $s$ is non-integer, respectively
    \bes
     C\verti{f\circ T_f}_{W^{s,p}}\leq \vertii{\partial^s_{\xi} f}_{L^p}, \ \fo \xi \in \mathbb{S}^{N-1}, 
    \ees
    if $s$ is an integer, with $p>1$ if $s>1$.

    In the above inequalities, $C>0$ depends on $s,p,N$ but not on $f$.
\end{lemma}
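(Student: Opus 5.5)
The plan is to treat the directional energy as a homogeneous ``gauge'' on $\R^N$ and put its unit ball in John position with a unimodular map. For $g\in\dot W^{s,p}$ and $\eta\in\R^N$, set
\bes
\varphi_g(\eta)\coloneq \left(\int_0^\infty t^{-sp-1}\vertii{\Delta^{\lfloor s\rfloor+1}_{t\eta}g}_{L^p}^p\,dt\right)^{1/p}
\ees
in the fractional case, and $\varphi_g(\eta)\coloneq\vertii{D^s g(\eta,\dots,\eta)}_{L^p}$ in the integer case.

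\textbf{Elementary properties.} Both expressions are positively homogeneous of degree $s$: $\varphi_g(\lambda\eta)=\verti{\lambda}^s\varphi_g(\eta)$. Under a linear change of variables they transform as $\varphi_{g\circ T}(\eta)=\verti{\det T}^{-1/p}\varphi_g(T\eta)$. Moreover, $\verti{g}_{W^{s,p}}$ is comparable to the $L^p(\mathbb S^{N-1})$ average of $\varphi_g$, and hence bounded by $C\sup_{\mathbb S^{N-1}}\varphi_g$.
\begin{itemize}
\item In the fractional case this comparability follows from polar coordinates $h=t\xi$.
\item In the integer case it follows from the polarization formula. It writes the symmetric $s$-linear form $D^s_xg$ as a fixed finite combination of its diagonal values $D^s_x g(\zeta,\dots,\zeta)$, with $\zeta$ ranging over a finite set of vectors built from the basis. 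This gives $\verti{g}_{W^{s,p}}\le C\max_j \varphi_g(\zeta_j)\le C\sup_{\mathbb S^{N-1}}\varphi_g$ by homogeneity.
\end{itemize}
So it suffices to find $T_f\in\mathrm{SL}_N$ such that $\sup_{\mathbb S^{N-1}}\varphi_{f\circ T_f}\le C\inf_{\mathbb S^{N-1}}\varphi_{f\circ T_f}$.

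\textbf{Key step: a quasi-triangle inequality.} I will prove
\be\label{quasi-tri-plan}
\varphi_g(\eta+\zeta)^{1/s}\le C\bigl(\varphi_g(\eta)^{1/s}+\varphi_g(\zeta)^{1/s}\bigr),\quad \eta,\zeta\in\R^N .
\ee
\begin{itemize}
\item \emph{Integer case.} Expand $\partial^s_{\eta+\zeta}=\sum_k\binom sk\partial^k_\eta\partial^{s-k}_\zeta$. The mixed terms should obey the interpolation bound
\bes
\vertii{\partial^k_\eta\partial^{s-k}_\zeta g}_{L^p}\le C\vertii{\partial^s_\eta g}_{L^p}^{k/s}\vertii{\partial^s_\zeta g}_{L^p}^{(s-k)/s}.
\ees
This is a two-variable statement after a linear change of coordinates. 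It follows from the multiplier $a^kb^{s-k}/(\verti a^s+\verti b^s)$, which is of Marcinkiewicz type on $L^p$ for $1<p<\infty$, combined with a scaling optimization. This is exactly where $p>1$ enters when $s\ge2$; for $s=1$ the estimate is simply the triangle inequality.
\item \emph{Fractional case.} Write $m=\lfloor s\rfloor+1$ and use $\Delta_{h+k}g=\Delta_h g(\cdot+k)+\Delta_k g$. Then $\Delta^m_{h+k}$ is a sum of translates of $\Delta_h^j\Delta_k^{m-j}$. For the mixed terms I would pass to the Littlewood–Paley (Besov) description of the directional seminorm, in which $\varphi_g(\eta)$ is equivalent to a $B^s_{p,p}$-type norm measuring $\verti{\eta\cdot D}^s g$. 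The mixed terms are then handled by the same multiplier-plus-scaling argument, which is harmless at the Besov level even for $p=1$.
\end{itemize}
I expect \eqref{quasi-tri-plan}, especially the mixed terms in the fractional case with $p=1$, to be the main obstacle.

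\textbf{Conclusion via John's theorem.} Given \eqref{quasi-tri-plan}, set $K=\{\eta:\varphi_f(\eta)\le1\}$. This is a centrally symmetric star body; it is bounded and has nonempty interior because $\varphi_f$ is continuous and nonvanishing for $f\neq0$ modulo polynomials. Iterating \eqref{quasi-tri-plan} together with Carathéodory's theorem in dimension $N$ gives $K\subset \mathrm{conv}\,K\subset C_1K$. Let $E$ be the John ellipsoid of $\mathrm{conv}\,K$, so that $E\subset \mathrm{conv}\,K\subset \sqrt N E$. Write $E=A(B)$ with $A\in\mathrm{GL}_N$ and $B$ the unit ball, and set $T_f=\verti{\det A}^{-1/N}A\in\mathrm{SL}_N$. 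Then $T_f^{-1}K$ is sandwiched between two concentric balls of radii $r$ and $C_2 r$. By homogeneity of degree $s$ and the transformation rule, $\varphi_{f\circ T_f}$ is comparable on $\mathbb S^{N-1}$ to a single constant, with ratio at most $C_2^{\,s}$. Combined with the comparison from the first step, this yields both inequalities of Lemma \ref{equili}, with $C$ depending only on $s,p,N$.
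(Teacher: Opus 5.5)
This paper does not prove Lemma \ref{equili}; it imports it from \cite[Theorem 1.4]{bullion2026higher}, so there is no in-text argument to compare yours with. Judged on its own, your architecture is correct and self-contained. The following steps all check out:
\begin{itemize}
\item the reduction to $\sup_{\mathbb{S}^{N-1}}\varphi_{f\circ T}\le C\inf_{\mathbb{S}^{N-1}}\varphi_{f\circ T}$;
\item the rule $\varphi_{g\circ T}(\eta)=|\det T|^{-1/p}\varphi_g(T\eta)$ and the polarization bound;
\item the Carath\'eodory step $\mathrm{conv}\,K\subset C_1K$ for the quasi-norm $\varphi_f^{1/s}$, and the John-position conclusion.
\end{itemize}
You prove the version with $\|\partial^s_\xi(f\circ T_f)\|_{L^p}$ on the right. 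That is the intended reading (it is how the lemma is used in \eqref{lb 1}); the displayed statement has $f$ there.

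Your quasi-triangle inequality is not just sufficient but necessary. The lemma says that $\varphi_f^{1/s}$ is comparable, with constants depending only on $s,p,N$, to a multiple of the norm $|T_f^{-1}\cdot|$. This is why $p>1$ cannot be dropped for integer $s\ge 2$ (Ornstein's $L^1$ non-inequality for mixed derivatives), while nothing is lost for non-integer $s$.

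Two simplifications are available. First, the scaling optimization is unnecessary: $\varphi(\eta+\zeta)\le C(\varphi(\eta)+\varphi(\zeta))$ already gives your inequality, via $(a+b)^{1/s}\le 2^{1/s}(a^{1/s}+b^{1/s})$. Second, you never need continuity of $\varphi_f$, which you have not shown for general $f\in\dot W^{s,p}$ in the fractional case:
\begin{itemize}
\item $\sup_{\mathbb{S}^{N-1}}\varphi_f<\infty$ is Lemma \ref{dir domination};
\item $\inf_{\mathbb{S}^{N-1}}\varphi_f>0$ when $|f|_{W^{s,p}}\neq0$ is Lemma \ref{no constant direction};
\item the quasi-triangle inequality then shows that the closure of $\mathrm{conv}\,K$, the compact body John's theorem is applied to, still lies in a fixed dilate of $K$.
\end{itemize}
If $\det A<0$, compose $A$ with a reflection.

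The only step that is merely sketched is the fractional mixed term, and there one point must be made explicit. The argument has to use the $t$-integral with $0<s<m$, not a Marcinkiewicz multiplier or any scale-by-scale comparison. For $p=1$, a bound of $\|\Delta^j_{te_1}\Delta^{m-j}_{te_2}g\|_{L^1}$ by pure differences at comparable scales would, after dividing by $t^m$ and letting $t\to0$, reproduce the false $L^1$ mixed-derivative estimate. A clean route, after sending $\eta,\zeta$ to $e_1,e_2$:
\begin{enumerate}
\item Bound $\int_0^\infty t^{-sp-1}\|\Delta^j_{te_1}\Delta^{m-j}_{te_2}g\|_{L^p}^p\,dt$ by $\sum_\ell 2^{\ell sp}\|Q_\ell g\|_{L^p}^p$, where $Q_\ell$ are Littlewood--Paley blocks in the variables $(\xi_1,\xi_2)$ only. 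Bernstein gives the factor $\min(1,(2^\ell t)^m)$, and $s<m$ makes the sum converge.
\item Show $\|Q_\ell g\|_{L^p}\lesssim\|P^1_\ell g\|_{L^p}+\|P^2_\ell g\|_{L^p}$, with $P^i_\ell$ a one-dimensional block in $\xi_i$. This holds because the intertwining multipliers are rescalings of fixed smooth compactly supported functions, hence have uniformly $L^1$ kernels.
\item Conclude $\sum_\ell 2^{\ell sp}\|P^i_\ell g\|_{L^p}^p\lesssim\varphi_g(e_i)^p$ from the one-dimensional difference characterization of $\dot B^s_{p,p}$ on lines parallel to $e_i$.
\end{enumerate}
This is the two-dimensional form of the directional characterization of Besov spaces whose easy half is Lemma \ref{dir domination}, and it holds for $p=1$.
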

We also mention the following observation, which illustrates the link between affine and \enquote{anisotropic} fractional Sobolev semi-norms. It is presented in \cite{haddad2024affine}, when $0<s<1$. Similar results hold when $s=1$, see \cite{lutwak2006optimal}.
\begin{prop}\label{obser}
    Let $s$ be non-integer, $1 \leq p<\infty,$ and $f\in \dot{W}^{s,p}$. 
    For each measurable $\rho$ such that
    $\rho \geq 0$,  $\rho \ \text{is} \ (-1)\text{ - homogeneous}$, and \bes \displaystyle \int_{\mathbb{S}^{N-1}} \rho^N(\xi) \ d \mathscr{H}^{N-1}(\xi) =1,\ees
    we have
    \be\label{result}
    \int_{\R^N}\vertii{\Delta_h^{\lfloor s \rfloor +1} f}_{L^p}^p \rho(h)^{N+sp} \ dh \geq \mathscr{E}_{s,p}(f)^p.
    \ee
    If $\verti{f}_{W^{s,p}}\neq 0$, there is equality in \eqref{result} if and only if 
    \bes
    \rho(\xi) = \mathscr{E}_{s,p}(f)^{1/s} \left(\int_{0}^{\infty} t^{-sp-1} \vertii{\Delta^{\lfloor s \rfloor +1}_{t\xi} f}_{L^p}^p \ dt\right)^{-1/sp}, \ \text{for a.e.} \  \xi \in \mathbb{S}^{N-1}. 
    \ees
    \end{prop}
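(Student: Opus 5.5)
The plan is to rewrite the left-hand side of \eqref{result} in polar coordinates and then use Hölder's inequality on the sphere. Write $h=t\xi$ with $t>0$ and $\xi\in\mathbb{S}^{N-1}$, so $dh=t^{N-1}\,dt\,d\mathscr{H}^{N-1}(\xi)$. Since $\rho$ is $(-1)$-homogeneous, $\rho(t\xi)^{N+sp}=t^{-N-sp}\rho(\xi)^{N+sp}$. Set
\bes
\Phi(\xi)\coloneq\int_0^\infty t^{-sp-1}\vertii{\Delta^{\lfloor s\rfloor+1}_{t\xi}f}_{L^p}^p\,dt\in[0,\infty].
\ees
By Tonelli, the left-hand side of \eqref{result} equals
\bes
\int_{\mathbb{S}^{N-1}}\rho(\xi)^{N+sp}\,\Phi(\xi)\,d\mathscr{H}^{N-1}(\xi),
\ees
while by definition $\mathscr{E}_{s,p}(f)^p=\left(\int_{\mathbb{S}^{N-1}}\Phi^{-N/sp}\right)^{-sp/N}$.

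The key step is a reverse Hölder inequality, obtained from ordinary Hölder with exponents $a=(N+sp)/N$ and $a'=(N+sp)/(sp)$. Write
\bes
\rho^N=\left(\rho^{N+sp}\Phi\right)^{N/(N+sp)}\,\Phi^{-N/(N+sp)}.
\ees
Hölder then gives
\bes
1=\int\rho^N\le\left(\int\rho^{N+sp}\Phi\right)^{N/(N+sp)}\left(\int\Phi^{-N/sp}\right)^{sp/(N+sp)}.
\ees
Raising this to the power $(N+sp)/N$ yields
\bes
\int\rho^{N+sp}\Phi\ge\left(\int\Phi^{-N/sp}\right)^{-sp/N}=\mathscr{E}_{s,p}(f)^p,
\ees
which is \eqref{result}.

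Some care is needed with the degenerate values $\Phi=0$ or $\Phi=\infty$ on sets of positive measure, and with the conventions $0^{-N/sp}=\infty$ and $\infty^{-N/sp}=0$. If $\Phi=0$ on a set of positive measure, then $\mathscr{E}_{s,p}(f)=0$ and there is nothing to prove. Otherwise we may assume the right-hand factor is finite. If the left-hand side is infinite, the inequality is trivial. Since $\int\Phi<\infty$ because $f\in\dot W^{s,p}$, we have $\Phi<\infty$ a.e. Also, if $\verti{f}_{W^{s,p}}\ne 0$, the integral $\int\Phi^{-N/sp}$ is finite and positive; finiteness follows from Lemma \ref{equili}, which gives $\Phi\ge c>0$ after the transformation $T_f$, transported back.

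For the equality case, assume $\verti{f}_{W^{s,p}}\neq0$, so that $0<\Phi<\infty$ a.e. and both Hölder factors are finite. Equality in Hölder holds if and only if $\rho^{N+sp}\Phi=\lambda\,\Phi^{-N/sp}$ a.e. for some constant $\lambda$, that is, $\rho=c\,\Phi^{-1/sp}$ a.e. The normalization $\int\rho^N=1$ fixes
\bes
c^N=\left(\int\Phi^{-N/sp}\right)^{-1}=\mathscr{E}_{s,p}(f)^{N/s},
\ees
so $c=\mathscr{E}_{s,p}(f)^{1/s}$, which is exactly the stated formula. Conversely, plugging this $\rho$ into the integral gives $c^{N+sp}\int\Phi^{-N/sp}=c^{sp}=\mathscr{E}_{s,p}(f)^p$, so equality does hold. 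The only delicate point is this bookkeeping of degenerate cases; the core of the argument is the single Hölder step.
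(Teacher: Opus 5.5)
Your proof is correct and follows the paper's route: polar coordinates using the $(-1)$-homogeneity of $\rho$, then the reverse H\"older inequality on $\mathbb{S}^{N-1}$ with $u=\Phi$ and $v=\rho^{N+sp}$, and finally its equality case together with the normalization $\int\rho^N=1$ to identify the constant $\mathscr{E}_{s,p}(f)^{1/s}$. The only differences are that you derive the reverse H\"older inequality from ordinary H\"older with exponents $(N+sp)/N$ and $(N+sp)/(sp)$ instead of quoting it, and you treat the degenerate cases ($\Phi=0$ on a set of positive measure, or $\int\Phi^{-N/sp}=\infty$) explicitly, which the paper leaves implicit.
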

\begin{proof}
    Using radial coordinates and the homogeneity of $\rho$, we find
    \bes
     \int_{\R^N}\vertii{\Delta_h^{\lfloor s \rfloor +1} f}_{L^p}^p \rho(h)^{N+sp} \ dh = \int_{\mathbb{S}^{N-1}} \left(\int_{0}^{\infty}t^{-sp-1} \vertii{\Delta_{t\xi}^{\lfloor s \rfloor +1} f}_{L^p}^p \ dt \right) \rho(\xi)^{N+sp} \ d\mathscr{H}^{N-1}(\xi). 
    \ees
   Applying the reverse H\"older inequality 
    \be\label{rev Hold}
    \ba
    \int_{\mathbb{S}^{N-1}} & u v\ d\mathscr{H}^{N-1}
    \\
    &\geq  \left( \int_{\mathbb{S}^{N-1}} u^{-N/sp}\ d\mathscr{H}^{N-1}\right)^{-sp/N} \left(\int_{\mathbb{S}^{N-1}} v^{N/(N+sp)} \ d\mathscr{H}^{N-1}\right)^{(N+sp)/N}, 
    \\ 
     &\fo u \geq 0, \ v\geq 0 \ \text{measurable}, 
    \ea
    \ee
    to
    \bes
    u(\xi)\coloneq  \int_{0}^{\infty}t^{-sp-1} \vertii{\Delta_{t\xi}^{\lfloor s \rfloor +1} f}_{L^p}^p \ dt, \ v(\xi)\coloneq \rho(\xi)^{N+sp}, \ \fo \xi \in \mathbb{S}^{N-1},
    \ees
    we therefore find
    \bes
    \int_{\R^N}\vertii{\Delta_h^{\lfloor s \rfloor +1} f}_{L^p}^p \rho(h)^{N+sp} \ dh  \geq \mathscr{E}_{s,p}(f)^p \left(\int_{\mathbb{S}^{N-1}} \rho(\xi)^N \ d\mathscr{H}^{N-1}(\xi) \right)^{(N+sp)/N}= \mathscr{E}_{s,p}(f)^p.
    \ees
    Hence, there is equality in \eqref{result} if and only if there is equality in the application of the reverse H\"older inequality \eqref{rev Hold}. In turn, when $\verti{f}_{W^{s,p}}\neq 0$, this is equivalent to
    \bes
    \ba
    \rho(\xi) &=\lambda \left( \int_{0}^{\infty}t^{-sp-1} \vertii{\Delta_{t\xi}^{\lfloor s \rfloor +1} f}_{L^p}^p \ dt \right)^{-1/sp}, \ \text{for almost every} \ \xi \in  \mathbb{S}^{N-1},
    \ea
    \ees
    for some $\lambda>0$. In this case, the condition 
    \bes
    \int_{\mathbb{S}^{N-1}} \rho^N(\xi) \ d\mathscr{H}^{N-1}(\xi) =1 
    \ees
    implies that $\lambda=\mathscr{E}_{s,p}(f)^{1/s}$.
\end{proof}  
A direct consequence of the previous proposition and of \eqref{aff sob} is the following.
\begin{prop}\label{ani prop}
    Let $s$ be non-integer and $1 \leq p<\infty$ be such that $sp<N$.  For each measurable $\rho$ such that
    $\rho \geq 0$,  $\rho \ \text{is} \ (-1)\text{ - homogeneous}$, and \bes \displaystyle \int_{\mathbb{S}^{N-1}} \rho^N(\xi) \ d \mathscr{H}^{N-1}(\xi) =1,\ees
    we have
    \be\label{ani}
        S \vertii{f}_{L^q} \leq \left(\int_{\R^N}\vertii{\Delta_h^{\lfloor s \rfloor +1} f}_{L^p}^p \rho(h)^{N+sp} \ dh \right)^{1/p}, \ \fo f \in L^q \cap \dot{W}^{s,p}.
        \ee
 $S$  (the best constant in \eqref{aff sob}) is the best constant such that \eqref{ani} holds independently of $\rho$.
\end{prop}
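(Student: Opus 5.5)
The inequality \eqref{ani} follows by chaining Proposition \ref{obser} with \eqref{aff sob}. For sharpness, I will compute the infimum over admissible $\rho$ of the right-hand side of \eqref{ani} and check that it equals $\mathscr{E}_{s,p}(f)$, so that the two inequalities have the same best constant.

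\emph{Inequality.} Fix $f \in L^q \cap \dot{W}^{s,p}$ and an admissible $\rho$. Proposition \ref{obser} gives
\bes
\left(\int_{\R^N}\vertii{\Delta_h^{\lfloor s \rfloor +1} f}_{L^p}^p \rho(h)^{N+sp} \ dh\right)^{1/p}\geq \mathscr{E}_{s,p}(f),
\ees
and \eqref{aff sob} gives $\mathscr{E}_{s,p}(f) \geq S\vertii{f}_{L^q}$. Hence \eqref{ani} holds with the constant $S$ for every admissible $\rho$.

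\emph{Optimality.} Suppose $S'$ is such that \eqref{ani} holds with $S'$ in place of $S$, for all admissible $\rho$ and all $f$. Fix $f$.
\begin{itemize}
\item If $\verti{f}_{W^{s,p}}=0$, then $f$ is a polynomial, and $f\in L^q$ forces $f=0$. Both sides of \eqref{aff sob} then vanish, so there is nothing to check.
\item If $\verti{f}_{W^{s,p}}\neq 0$, set
\bes
u(\xi)\coloneq\int_0^\infty t^{-sp-1}\vertii{\Delta^{\lfloor s \rfloor +1}_{t\xi} f}_{L^p}^p\,dt .
\ees
Choose $\rho_f(\xi)=\mathscr{E}_{s,p}(f)^{1/s}u(\xi)^{-1/sp}$ on $\mathbb{S}^{N-1}$ and extend it $(-1)$-homogeneously.
\end{itemize}

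Two points must be checked for $\rho_f$.
\begin{itemize}
\item \emph{Well-definedness.} We need $\mathscr{E}_{s,p}(f)>0$, i.e.\ $\int u^{-N/sp}<\infty$. This holds because $\mathscr{E}_{s,p}(f)\geq S\vertii{f}_{L^q}>0$. We also need $\mathscr{E}_{s,p}(f)<\infty$, which follows from $\mathscr{E}_{s,p}(f)\le \alpha_{s,p,N}\verti{f}_{W^{s,p}}$.
\item \emph{Normalization.} A direct computation gives
\bes
\int \rho_f^N=\mathscr{E}_{s,p}(f)^{N/s}\int u^{-N/sp}=1,
\ees
by the definition of $\mathscr{E}_{s,p}$. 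Where $u=\infty$, $\rho_f$ takes the value $0$, which is harmless.
\end{itemize}

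By the equality case of Proposition \ref{obser}, the right-hand side of \eqref{ani} for $\rho=\rho_f$ equals $\mathscr{E}_{s,p}(f)$. Therefore $S'\vertii{f}_{L^q}\le \mathscr{E}_{s,p}(f)$ for all $f$, and hence $S'\le S$ by the definition of $S$.

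The main obstacle is the degenerate behaviour of $u$: on a set of directions $u$ may be $0$ or $\infty$, and there $\rho_f$ must be defined with some care. If $u$ vanished on a set of positive measure, $\mathscr{E}_{s,p}(f)$ would be $0$, which is excluded because $f\neq0$. Null sets do not affect any of the integrals.
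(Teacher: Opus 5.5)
Your proof is correct and follows the paper's argument: the inequality comes from chaining Proposition \ref{obser} with \eqref{aff sob}, and sharpness comes from the equality case of Proposition \ref{obser}, applied with $\rho_f$ for each $f$ with $\verti{f}_{W^{s,p}}\neq 0$. Your worry about $u$ being $0$ or $\infty$ on some directions does not arise: when $\verti{f}_{W^{s,p}}\neq 0$, Lemma \ref{dir domination} and Lemma \ref{no constant direction} give $0<\inf u\le \sup u<\infty$.
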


In what follows, we will denote the weak-$*$ convergence of finite Borel measures by $\overset{*}{\rightharpoonup}$. 

We use two notions of weak convergence, adapted to integer, respectively non-integer order spaces. If $s$ is an integer, we use the classical notion of weak convergence and say that $f_n \rightharpoonup f$ in $\dot{W}^{s,p}$ if $\partial^s_{\xi}f_n \rightharpoonup \partial^s_{\xi} f$ in $L^p$, for each $\xi \in \mathbb{S}^{N-1}$. 
If $s$ is non-integer, it will be more convenient to work in the space 
\bes
\mathring{W}^{s,p}\coloneq L^{q} \cap \dot{W}^{s,p}.
\ees Given $s>0$ and $1\leq p<\infty$ such that $sp<N$, we have the Sobolev embedding
\be\label{sob}
C \vertii{f}_{L^q} \leq \verti{f}_{W^{s,p}}, \ \fo f \in \mathring{W}^{s,p},
\ee
see, e.g., \cite[Theorems 12.9, 17.49]{leoni2017first}. 
Hence, $(\mathring{W}^{s,p}, \verti{\cdot}_{W^{s,p}})$ is a Banach space (while $\dot{W}^{s,p}$ is only a semi-normed space). 

We use the following 
\begin{defi}\label{w conv frac}
 Let $s$ be non-integer and $1 \leq p<\infty$ satisfy $sp<N$. We say that a sequence $(f_n) \subset \mathring{W}^{s,p}$ weakly converges to $f$ in $\mathring{W}^{s,p}$  if $\sup_n \verti{f_n}_{W^{s,p}}<\infty$ and $f_n \to f$ in $L^p_{\text{loc}}$. In that case, we write $f_n \rightharpoonup f$ in $\mathring{W}^{s,p}$.
\end{defi}

Definition \ref{w conv frac} is convenient as the following results show.
 \begin{lemma}\label{weak comp}
     Let $s$ be non-integer and $1 \leq p<\infty$ satisfy $sp<N$. If $(f_n) \subset \mathring{W}^{s,p}$ is such that $\sup_n \verti{f_n}_{W^{s,p}}<\infty$, then  there exists a subsequence $(f_{n_k})$ of $(f_n)$, and $f \in \mathring{W}^{s,p}$, such that $f_{n_k} \rightharpoonup f$ in $\mathring{W}^{s,p}$.
 \end{lemma}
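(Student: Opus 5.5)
Given $(f_n)\subset \mathring{W}^{s,p}$ with $M\coloneq\sup_n \verti{f_n}_{W^{s,p}}<\infty$, the Sobolev embedding \eqref{sob} gives $\sup_n\vertii{f_n}_{L^q}\le M/C$. The plan is to produce a limit in $L^p_{\text{loc}}$ by a compactness argument on balls, and then pass to the limit in both $L^q$ and $\verti{\cdot}_{W^{s,p}}$ by lower semicontinuity.

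\textbf{Step 1: local compactness.} Fix a ball $B_R$. Since $q>p$, Hölder's inequality shows that $(f_n)$ is bounded in $L^p(B_R)$. Next, I would prove equicontinuity of translations in $L^p(B_R)$. The natural route is to bound $\vertii{\Delta_h f}_{L^p}$ by means of the higher-order difference $\Delta_h^{\lfloor s\rfloor+1}$ and the seminorm. When $0<s<1$, the standard averaging argument applies. One writes $f(x+h)-f(x)$ as an average over an auxiliary point $y$ in a ball of radius $\verti{h}$, and gets $\int_{B_R}\verti{f_n(x+h)-f_n(x)}^p\,dx\lesssim \verti{h}^{sp}\verti{f_n}_{W^{s,p}}^p$.

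For $s>1$, I would avoid Marchaud-type inequalities. Instead, I would use the known equivalence of $\dot{W}^{s,p}\cap L^q$ with spaces defined via lower-order derivatives. Concretely, $f\in\mathring{W}^{s,p}$ implies $f\in \dot{W}^{\sigma,r}_{\text{loc}}$ for some $0<\sigma<1$ and $r\ge p$. An alternative is to mollify: write $f_n=f_n*\varphi_\varepsilon+(f_n-f_n*\varphi_\varepsilon)$. The mollified parts are uniformly bounded together with their gradients on $B_R$, using only the $L^q$ bound. The remainder is estimated by
\bes
\vertii{f_n - f_n*\varphi_\varepsilon}_{L^p}\lesssim \varepsilon^{s}\verti{f_n}_{W^{s,p}},
\ees
taking $\varphi$ with vanishing moments up to order $\lfloor s\rfloor$. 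This last bound is the main technical point. It follows by writing $f-f*\varphi_\varepsilon$ as an average of $\Delta^{\lfloor s\rfloor+1}_{h}f$ against a suitable kernel supported in $\verti{h}\lesssim\varepsilon$, which is the standard representation of the error of a moment-vanishing kernel, and then applying Minkowski and Hölder in $h$. Once this holds, Arzelà–Ascoli applied to the smooth parts, together with the uniformly small remainder, gives total boundedness of $(f_n)$ in $L^p(B_R)$.

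\textbf{Step 2: diagonal extraction.} Apply Step 1 on $B_R$ for $R=1,2,\dots$ and use a diagonal argument. This yields a subsequence $f_{n_k}\to f$ in $L^p_{\text{loc}}$, and after a further extraction, also a.e.

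\textbf{Step 3: the limit lies in $\mathring{W}^{s,p}$.} By Fatou's lemma and the a.e. convergence, $\vertii{f}_{L^q}\le\liminf\vertii{f_{n_k}}_{L^q}<\infty$. Moreover, $\Delta_h^{\lfloor s\rfloor+1}f_{n_k}\to\Delta_h^{\lfloor s\rfloor+1}f$ a.e. in $x$ for every $h$. Hence Fatou's lemma applied in $(x,h)$ gives $\verti{f}_{W^{s,p}}\le\liminf\verti{f_{n_k}}_{W^{s,p}}\le M$. Thus $f\in\mathring{W}^{s,p}$ and $f_{n_k}\rightharpoonup f$ in the sense of Definition \ref{w conv frac}.

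I expect the moment-vanishing approximation estimate in Step 1 to be the only nontrivial step for $s>1$. Everything else is routine.
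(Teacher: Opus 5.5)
Your proof is correct, and its skeleton is the paper's: a uniform bound in $L^p(B_R)$ from Hölder and the Sobolev embedding \eqref{sob}, then local compactness in $L^p$, diagonal extraction, and Fatou's lemma for both $\vertii{\cdot}_{L^q}$ and $\verti{\cdot}_{W^{s,p}}$.

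The real difference is the compactness step. The paper simply invokes the Rellich--Kondrachov theorem, while you prove it by hand. The citation is shorter, but for $s>1$ it needs a version of Rellich--Kondrachov adapted to the global difference seminorm. One must know that the restrictions to $B_R$ are bounded in a fractional space on $B_R$ that embeds compactly into $L^p(B_R)$. Your mollifier argument uses only the global seminorm and the global $L^q$ bound. It works for every non-integer $s$, including $p=1$. So the separate treatment of $0<s<1$ and the vague alternative via $\dot{W}^{\sigma,r}_{\text{loc}}$ can be dropped.

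One point should be made precise. An exact representation of $f-f*\varphi_\varepsilon$ through $\Delta^{m}_h f$, with $m=\lfloor s\rfloor+1$, is not available for an arbitrary moment-vanishing kernel, so you should choose the kernel explicitly. Take $\psi\in C_c^\infty(B(0,1))$ with $\int\psi=1$ and set
\[
\varphi(y)=-\sum_{k=1}^{m}\binom{m}{k}(-1)^k k^{-N}\psi(-y/k).
\]
Averaging the identity $(-1)^m\Delta_h^m f(x)=\sum_{k=0}^{m}\binom{m}{k}(-1)^k f(x+kh)$ against $\psi_\varepsilon(h)$ gives exactly
\[
f-f*\varphi_\varepsilon=(-1)^m\int\Delta_h^m f\,\psi_\varepsilon(h)\,dh .
\]
This $\varphi$ automatically has integral $1$ and vanishing moments of orders $1,\dots,m-1$. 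Minkowski's inequality, followed by Hölder in $h$ over $\verti{h}\le\varepsilon$, then yields $\vertii{f-f*\varphi_\varepsilon}_{L^p}\le C\varepsilon^s\verti{f}_{W^{s,p}}$, as you claim.
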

 \begin{proof}
   By the Sobolev embedding \eqref{sob} and H\"older's inequality, we have, for each $R>0$
   \bes
    \vertii{f_n}_{L^p(B(0,R))}\leq C\vertii{f_n}_{L^q(B(0,R))} \leq C \verti{f_n}_{W^{s,p}} \leq C \sup_n \verti{f_n}_{W^{s,p}} < \infty, \ \fo n \in \N.
   \ees
    We have, for each $R>0$, $\sup_n \vertii{f_n}_{L^p(B(0,R))} + \verti{f_n}_{W^{s,p}}<\infty$. Therefore, the Rellich-Kondrachov theorem and a diagonal extraction procedure yield a subsequence $(f_{n_k})$ and $f \in L^p_{\text{loc}}$ such that $f_{n_k} \to f$ in $L^p_{\text{loc}}$. 

    By Fatou's lemma and the Sobolev embedding, we have \bes \int_{\R^N} \verti{f(x)}^{q} \ dx \leq \liminf_k \int_{\R^N} \verti{f_{n_k}(x)}^{q} \ dx \leq C\sup_n\verti{f_n}_{W^{s,p}}^q <\infty, \ees
    and $f \in L^q$. This argument also yields $f \in \dot{W}^{s,p}$, and thus $f\in \mathring{W}^{s,p}$.
 \end{proof}
 Another straightforward consequence of the Sobolev embedding \eqref{sob} is
 \begin{lemma}
     Let $s$ be non-integer and $1\leq p <\infty$ satisfy $sp<N$. If $(f_n) \subset \mathring{W}^{s,p}$ weakly converges to $f  \in \mathring{W}^{s,p}$, then $f_n \rightharpoonup f$ in $L^q$.
 \end{lemma}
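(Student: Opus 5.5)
The plan is to show that $(f_n)$ is bounded in $L^q$, that every weak-$L^q$ cluster point must equal $f$, and then to conclude by uniqueness of the limit.

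\emph{Boundedness.} By Definition \ref{w conv frac}, $M\coloneq\sup_n \verti{f_n}_{W^{s,p}}<\infty$. The Sobolev embedding \eqref{sob} then gives
\bes
\sup_n\vertii{f_n}_{L^q}\leq C^{-1}M<\infty .
\ees
Since $sp<N$, we have $1<q<\infty$, so $L^q$ is reflexive and bounded sets are weakly sequentially relatively compact. This holds even when $p=1$, because $q=N/(N-s)>1$.

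\emph{Identification of cluster points.} Suppose a subsequence satisfies $f_{n_k}\rightharpoonup g$ weakly in $L^q$. Fix $\varphi\in C_c^\infty$, with support in some ball $B$. Because $\varphi\in L^{q'}$, weak convergence gives $\int f_{n_k}\varphi\to\int g\varphi$.

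On the other hand, $f_n\to f$ in $L^p(B)$ by Definition \ref{w conv frac}, and $\varphi\in L^{p'}(B)$. This includes $p=1$, where $p'=\infty$ and $\varphi$ is bounded. Hence $\int f_{n_k}\varphi\to\int f\varphi$.

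Therefore $\int (g-f)\varphi=0$ for every test function $\varphi$. Since $f,g\in L^1_{loc}$, this forces $g=f$ almost everywhere.

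\emph{Conclusion.} Every subsequence of $(f_n)$ has a further subsequence converging weakly in $L^q$, and each such limit equals $f$. Hence the whole sequence satisfies $f_n\rightharpoonup f$ in $L^q$.

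There is no real obstacle here. The only points needing care are that $q>1$, which guarantees reflexivity, and that local $L^p$ convergence suffices to pair against compactly supported test functions.
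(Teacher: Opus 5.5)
Your proof is correct and matches the paper's intended route. The paper writes out no proof and only calls the lemma a straightforward consequence of the Sobolev embedding \eqref{sob}, which is exactly your combination of $L^q$-boundedness from \eqref{sob} with identification of the limit through the $L^p_{\text{loc}}$ convergence (you could equally skip the reflexivity/subsequence step, since boundedness in $L^q$ plus convergence against the dense class $C_c^\infty\subset L^{q'}$, $q'<\infty$, already gives weak convergence).
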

Before going further, we recall the Littlewood-Paley characterization of homogeneous Besov spaces (for more details see, e.g., \cite[Chapter 5]{triebel2010theory}, \cite[Chapter 2]{bahouri2011fourier}).

We denote by $\mathscr{S}$ the space of Schwartz functions and let 
\bes
\mathscr{Z} \coloneq \{ \Psi \in \mathscr{S}; \ \partial^{\alpha}\Psi(0)=0, \ \fo \alpha \in \N^N \}.
\ees
$\mathscr{Z}$ is equipped with the Fréchet topology inherited from $\mathscr{S}$. We may consider its topological dual $\mathscr{Z}'$.
Let $(\varphi_j)_{j \in \Z}$ be a sequence of functions such that: 
\begin{itemize}
    \item $\supp \varphi_j \subset B(0,2^{j+1}) \setminus B(0,2^{j-1})$ for each $j \in \Z$.
    \item For each multi-index $\alpha$, there exists $C_{\alpha}<\infty$ such that $\verti{\partial^{\alpha}\varphi_j(x)}\leq C_{\alpha}2^{-j\verti{\alpha}}$, for each $x\in \R^N$, $j \in \Z $.
    \item For each $x \in \R^N \setminus \{0\}$, $\sum_{j\in \Z} \varphi_j(x) = 1.$
\end{itemize}
Given $s\in \R$, $1 \leq p<\infty ,1 \leq q <\infty$, we denote by $\dot{B}^{s}_{p,q}$ the space of distributions $f\in \mathscr{Z}'$ such that 
\bes
\ba
\verti{f}_{B^{s}_{p,q}}^q &\coloneq \sum_{j \in \Z} 2^{-sjq}\vertii{\mathscr{F}^{-1}(\varphi_j \mathscr{F}f)}_{L^p}^q  < \infty,
\ea
\ees
where $\mathscr{F}$ is the Fourier transform and $\mathscr{F}^{-1}$ is the inverse Fourier transform. In relation with this definition of Besov spaces, we have the following result, see e.g. \cite[Theorem 2, Section 5.2.3]{triebel2010theory}. 
\begin{lemma}\label{paley}
Let $s$ be non-integer and $1 \leq p<\infty$ be such that $sp<N$.
There exist $0<C_1 \leq  C_2<\infty$ such that
\bes
C_1\verti{f}_{B^{s}_{p,p}}\leq \verti{f}_{W^{s,p}} \leq C_2\verti{f}_{B^{s}_{p,p}}, \ \fo f \in \mathring{W}^{s,p}.
\ees
\end{lemma}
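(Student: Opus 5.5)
The plan is to reduce Lemma \ref{paley} to the classical equivalence between the Gagliardo-type difference norm and the Littlewood--Paley Besov norm for homogeneous spaces. The one genuinely new point is that elements of $\mathring{W}^{s,p}$ are functions in $L^q$, not just equivalence classes modulo polynomials. Let $m\coloneq\lfloor s\rfloor+1$. The key identity is
\bes
\verti{f}_{W^{s,p}}^p=\int_{\R^N}\frac{\vertii{\Delta_h^{m}f}_{L^p}^p}{\verti{h}^{N+sp}}\,dh .
\ees
Since $m>s$, this is the standard difference characterization of $\dot B^s_{p,p}$ (Triebel, Section 5.2.3). So the main work is to check that the hypotheses of that characterization are met for $f\in L^q$, and that the realization issue (modulo polynomials) causes no trouble.

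\textbf{Step 1: $f$ lies in $\mathscr{Z}'$ and is the canonical realization.} For $f\in L^q\subset\mathscr{S}'$, the restriction of $f$ to $\mathscr{Z}$ defines an element of $\mathscr{Z}'$. Moreover, $\sum_j\mathscr{F}^{-1}(\varphi_j\mathscr{F}f)$ converges to $f$ in $\mathscr{S}'$ modulo polynomials. Because $f\in L^q$ with $q<\infty$, the low-frequency part $\sum_{j\le J}\mathscr{F}^{-1}(\varphi_j\mathscr F f)$ tends to $0$ in $L^q$ as $J\to-\infty$. Hence no polynomial is lost: $f=\sum_j \mathscr{F}^{-1}(\varphi_j\mathscr{F}f)$ in $\mathscr{S}'$.

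\textbf{Step 2: the upper bound $\verti{f}_{W^{s,p}}\le C_2\verti{f}_{B^s_{p,p}}$.} Write $f_j\coloneq\mathscr{F}^{-1}(\varphi_j\mathscr F f)$. Using Step 1, $\Delta_h^m f=\sum_j\Delta_h^m f_j$. By Bernstein's inequality,
\bes
\vertii{\Delta_h^m f_j}_{L^p}\lesssim \min\bigl(1,(2^{j}\verti{h})^m\bigr)\vertii{f_j}_{L^p}.
\ees
Summing over $j$, raising to the power $p$, integrating against $\verti{h}^{-N-sp}dh$ on dyadic shells $\verti{h}\sim2^{-k}$, and applying a discrete Young/Hardy inequality gives the bound. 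Since $0<s<m$, both tails converge: the factor $(2^{j-k})^{m-s}$ controls $j<k$, and $2^{-(j-k)s}$ controls $j\ge k$. For $p=1$ the triangle inequality replaces Minkowski.

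\textbf{Step 3: the lower bound $C_1\verti{f}_{B^s_{p,p}}\le\verti{f}_{W^{s,p}}$.} Note that $\widehat{\Delta_h^m f}(\zeta)=(e^{ih\cdot\zeta}-1)^m\hat f(\zeta)$. Average over $h$ in a set $\{\verti{h}\sim 2^{-j}\}$; on the annulus $\verti{\zeta}\sim2^j$ the averaged multiplier is bounded below. This lets us write $\varphi_j\hat f$ as a smooth $L^1$-bounded multiplier, uniform in $j$ by scaling, applied to an average of $\widehat{\Delta_h^mf}$. It follows that
\bes
\vertii{f_j}_{L^p}^p\lesssim 2^{jN}\int_{\verti{h}\sim2^{-j}}\vertii{\Delta_h^mf}_{L^p}^p\,dh .
\ees
Multiplying by $2^{jsp}$ and summing over $j$ yields the claim. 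Here the hypothesis $sp<N$ enters only through $f\in L^q$, which was used in Step 1.

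\textbf{Main obstacle.} The delicate point is Step 1. One must check that for $f\in\mathring W^{s,p}$ the Littlewood--Paley series reconstructs $f$ itself, not $f$ plus a polynomial, since otherwise the identity $\Delta_h^m f=\sum_j\Delta_h^m f_j$ in Step 2 could fail. The decay of low frequencies in $L^q$ is exactly what is needed. Alternatively, one can note that a polynomial in $L^q$ must vanish, and that $\Delta_h^m$ kills polynomials of degree less than $m$.
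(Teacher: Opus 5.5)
Your proposal is correct and follows the same route as the paper, which simply invokes the classical difference characterization of $\dot B^s_{p,p}$ (Triebel, Section 5.2.3). Your Steps 2--3 sketch the standard proof of that theorem, and your Step 1 makes explicit a point the paper leaves implicit: for $f\in L^q$ the Littlewood--Paley series recovers $f$ itself, so there is no polynomial ambiguity.

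One small precision is needed in Step 3. The average of $(e^{ih\cdot\zeta}-1)^m$ over a shell of $h$ is only guaranteed to stay away from $0$ on $|\zeta|\sim 2^j$ if you take the shell to be $|h|\sim \epsilon 2^{-j}$, with $\epsilon$ small enough that $|h\cdot\zeta|<\pi/m$; the symmetrized integrand then has a fixed sign. Alternatively, average $|e^{ih\cdot\zeta}-1|^{2m}$ instead and absorb the extra factor $(e^{-ih\cdot\zeta}-1)^m$ as the symbol of $\Delta^m_{-h}$.
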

 The $\dot{B}^{s}_{p,p}$ norm is particularly useful to prove density results in $\mathring{W}^{s,p}$. Combining Lemma \ref{paley} and \cite[Proposition 2.27]{bahouri2011fourier}, we have
 \begin{lemma}\label{density}
 Let $s$ be non-integer and $1 \leq p<\infty$ be such that $sp<N$. For each $f \in \mathring{W}^{s,p}$, there exists $(f_n) \subset \mathscr{S}$ such that $\verti{f_n - f}_{W^{s,p}} \to 0$, as $n \to \infty$.
 \end{lemma}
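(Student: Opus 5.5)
The plan is to obtain Lemma \ref{density} as a corollary of Lemma \ref{paley} combined with the classical density of Schwartz functions (with Fourier transform supported away from the origin) in homogeneous Besov spaces, \cite[Proposition 2.27]{bahouri2011fourier}. The one point needing care is the realization issue: $\dot{B}^{s}_{p,p}$ is a space of distributions modulo polynomials ($\mathscr{Z}'$), whereas $\mathring{W}^{s,p}$ consists of genuine functions in $L^q$. The Littlewood–Paley pieces must therefore be shown to reconstruct $f$ itself, not merely $f$ up to a polynomial.

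Fix $f\in\mathring{W}^{s,p}$. By Lemma \ref{paley}, $\verti{f}_{B^{s}_{p,p}}<\infty$. For $M\in\N$, set
\bes
f_M\coloneq \sum_{|j|\leq M}\mathscr{F}^{-1}(\varphi_j\mathscr{F}f).
\ees
Since $f\in L^q$, each piece lies in $L^q\cap L^p$ (Bernstein) and is smooth with Fourier support in an annulus.

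Using the finite overlap of the supports of the $\varphi_j$, the Besov seminorm of $f-f_M$ is controlled by the tail
\bes
\sum_{|j|\geq M-1}2^{-sjp}\vertii{\mathscr{F}^{-1}(\varphi_j\mathscr{F}f)}_{L^p}^p,
\ees
which tends to $0$. The sign convention in the definition of the norm is irrelevant here; only summability matters. Moreover, since $f\in L^q$ with $q<\infty$, the low-frequency part $\sum_{j<-M}\mathscr{F}^{-1}(\varphi_j\mathscr{F}f)$ tends to $0$ in $\mathscr{S}'$. This is exactly the criterion in \cite{bahouri2011fourier}, and it guarantees that $f_M\to f$ in $L^q$, hence in $\mathscr{S}'$ and not just modulo polynomials. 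Consequently $f-f_M\in\mathring{W}^{s,p}$, and Lemma \ref{paley} applied to $f-f_M$ gives $\verti{f-f_M}_{W^{s,p}}\to0$.

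Each $f_M$ is smooth and band-limited, but it need not be Schwartz, since it may decay only like an $L^q$ function. To fix this, mollify on the Fourier side: replace $f_M$ by $g\coloneq \psi_\varepsilon * f_M$ with $\psi_\varepsilon\in\mathscr{S}$, $\mathscr{F}\psi_\varepsilon$ compactly supported and equal to $1$ near the origin, and then multiply by a cutoff $\chi(\cdot/R)$.

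The multiplication by $\chi(\cdot/R)$ must be controlled in the $W^{s,p}$ seminorm. Expanding the finite difference with the discrete Leibniz rule for $\Delta_h^{\lfloor s\rfloor+1}$, the resulting terms are small as $R\to\infty$, because $f_M$ and all its derivatives lie in $L^p\cap L^\infty$ and derivatives of $\chi(\cdot/R)$ carry factors $R^{-k}$. Indeed, for band-limited $L^p$ functions the full inhomogeneous $W^{s,p}$ norm is finite, and truncation is continuous in the inhomogeneous norm.

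I expect the main obstacle to be the realization step: showing that the Littlewood–Paley reconstruction converges to $f$ itself. This is where $f\in L^q$ with $q<\infty$ is essential, and where $sp<N$ enters.
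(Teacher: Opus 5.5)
Your proposal is correct and follows the paper's route. The paper simply combines Lemma \ref{paley} with \cite[Proposition 2.27]{bahouri2011fourier}, which you unpack via Littlewood--Paley truncation and a spatial cutoff. The realization issue you flag as the main obstacle is already absorbed by Lemma \ref{paley} applied to $f-f_M\in L^q\cap\dot{W}^{s,p}$.

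Two small slips. First, $\mathscr{F}^{-1}(\varphi_j\mathscr{F}f)\in L^p$ comes from the finiteness of $\verti{f}_{B^{s}_{p,p}}$, not from Bernstein, which cannot pass from $L^q$ down to $L^p$ since $p<q$. Second, the Fourier-side mollification is redundant, because $\chi(\cdot/R)f_M$ already lies in $C_c^\infty$.
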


We now gather some lemmas and start by recalling the reverse Minkowski inequality which will be instrumental in this article, see \cite[Theorem 198]{hardy1952inequalities}.
Let $(X,\mu)$ be a measure space and $\alpha<0$. We have
\be \label{reverse minko}
\vertii{f}_{L^{\alpha}}+\vertii{g}_{L^{\alpha}} \leq \vertii{f+g}_{L^{\alpha}},
\ee
for each $\mu-$measurable $f,g$ such that $f\geq 0$, $g\geq 0$.

For the convenience of the reader, we also state a straightforward consequence of Fatou's lemma that we will use repeatedly.
\begin{lemma}\label{rev fatou}
    Let $(X,\mu)$ be a measure space and $(f_n)$ be a sequence of positive functions. If there exists $g \in L^1(X,\mu)$ such that $f_n \leq g$, for each $n \in \N$, then 
    \bes
    \limsup_n \int_{X} f_n \ d\mu \leq \int_{X} \limsup_n f_n \ d \mu.
    \ees
\end{lemma}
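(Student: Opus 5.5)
The statement is Lemma~\ref{rev fatou}, a reverse Fatou lemma. The plan is to apply Fatou's lemma to the nonnegative functions $g - f_n$. These are measurable and satisfy $g-f_n\geq 0$ $\mu$-a.e. by hypothesis. Since $g\in L^1(X,\mu)$ and $0\leq f_n\leq g$, each $f_n$ is integrable and $\int_X f_n\,d\mu\leq \int_X g\,d\mu<\infty$. This finiteness is what allows us to subtract integrals below without meeting $\infty-\infty$. Likewise $\limsup_n f_n\leq g$ pointwise, so $\limsup_n f_n$ is measurable and integrable.

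Fatou's lemma gives
\bes
\int_X \liminf_n (g-f_n)\,d\mu \leq \liminf_n \int_X (g-f_n)\,d\mu .
\ees
Pointwise, $\liminf_n(g-f_n)=g-\limsup_n f_n$, which is finite wherever $g$ is finite, hence $\mu$-a.e. On the right-hand side, $\int_X (g-f_n)\,d\mu=\int_X g\,d\mu-\int_X f_n\,d\mu$, and therefore
\bes
\liminf_n \int_X (g-f_n)\,d\mu=\int_X g\,d\mu-\limsup_n\int_X f_n\,d\mu .
\ees
Combining these, we obtain
\bes
\int_X g\,d\mu-\int_X \limsup_n f_n\,d\mu\leq \int_X g\,d\mu-\limsup_n \int_X f_n\,d\mu .
\ees
Cancelling the finite quantity $\int_X g\,d\mu$ yields the claim.

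There is no real obstacle. The only point requiring care is the justification that every quantity being subtracted is finite, and this is exactly where the integrability of the dominating function $g$ is used.
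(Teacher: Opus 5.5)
Your proposal is correct, and it is exactly the argument the paper intends. The paper gives no written proof and simply calls the lemma a straightforward consequence of Fatou's lemma; your version applies Fatou's lemma to $g-f_n\geq 0$ and cancels the finite quantity $\int_X g\,d\mu$, correctly noting that integrability of $g$ rules out any $\infty-\infty$ issue.
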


We will frequently use the following lemmas when studying affine energies.
\begin{lemma}\label{dir domination} (\cite[Theorem, Section 2.5.13]{triebel2010theory})
    Let $s>0$ be non-integer and $1 \leq p<\infty$. There exists $C>0$ such that, for each $f \in \dot{W}^{s,p}$,
    \bes
    C \left(\int_{0}^{\infty}t^{-sp-1} \vertii{\Delta_{t\xi}^{\lfloor s \rfloor +1} f}_{L^p}^p dt\right)^{1/p} \leq \verti{f}_{W^{s,p}}, \ \fo \xi \in \mathbb{S}^{N-1}.
    \ees
\end{lemma}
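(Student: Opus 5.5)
The idea is to control the one-dimensional difference $\Delta^{m}_{t\xi}f$, where $m\coloneq\lfloor s\rfloor+1$, by averages of $\Delta^m_{\eta}f$ over vectors $\eta$ of length comparable to $t$ and pointing in all directions. Integrating in $t$ will then produce the full seminorm $\verti{f}_{W^{s,p}}$, with constants independent of $\xi$.

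\emph{Step 1: an algebraic identity.} For all $h,\eta\in\R^N$ and a.e.\ $x$,
\bes
\Delta^m_h f(x)=\sum_{j=1}^m(-1)^j\binom{m}{j}\Big(\Delta^m_{j\eta}f(x+jh)-\Delta^m_{h+j\eta}f(x)\Big).
\ees
To verify it, expand both families of differences as double sums $\sum_{j,k}(-1)^{m-k+j}\binom{m}{j}\binom{m}{k}f(x+\cdot)$. Over the full range $0\le j,k\le m$, the two double sums coincide after exchanging $j$ and $k$. Indeed, the terms are $f(x+jh+jk\eta)$ and $f(x+kh+kj\eta)$, and the signs agree. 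The sums in the identity differ from these full sums only by their $j=0$ terms, which are $\Delta^m_0f=0$ and $\Delta^m_hf(x)$ respectively. This gives the identity; for $m=1$ it reduces to $\Delta_hf(x)=\Delta_{h+\eta}f(x)-\Delta_\eta f(x+h)$.

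\emph{Step 2: averaging.} Fix $\xi\in\mathbb{S}^{N-1}$ and $t>0$, and put $h=t\xi$. Taking $L^p$ norms in Step 1 and using translation invariance gives
\bes
\vertii{\Delta^m_{t\xi}f}_{L^p}^p\le C\sum_{j=1}^m\Big(\vertii{\Delta^m_{j\eta}f}_{L^p}^p+\vertii{\Delta^m_{t\xi+j\eta}f}_{L^p}^p\Big).
\ees
Average this over $\eta\in B(t\xi,t/2)$. For such $\eta$ and $1\le j\le m$, both $j\eta$ and $t\xi+j\eta$ have length between $t/2$ and $(2m+1)t$. After the linear changes of variables $z=j\eta$ and $z=t\xi+j\eta$, whose Jacobians are bounded in terms of $m,N$ only, we get
\bes
\vertii{\Delta^m_{t\xi}f}_{L^p}^p\le C\,t^{-N}\int_{t/2\le\verti{z}\le(2m+1)t}\vertii{\Delta^m_zf}_{L^p}^p\,dz ,
\ees
with $C=C(m,p,N)$ independent of $\xi$ and $t$.

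\emph{Step 3: integration in $t$.} Multiply by $t^{-sp-1}$, integrate over $t\in(0,\infty)$, and apply Tonelli. For fixed $z$,
\bes
\int_{\verti{z}/(2m+1)}^{2\verti{z}}t^{-sp-1-N}\,dt\le C\verti{z}^{-sp-N},
\ees
which yields
\bes
\int_0^\infty t^{-sp-1}\vertii{\Delta^m_{t\xi}f}_{L^p}^p\,dt\le C\verti{f}_{W^{s,p}}^p,
\ees
uniformly in $\xi$. This is the claim.

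The only non-routine point is Step 1. A naive splitting of the translation $\tau_h$ as $\tau_\eta\tau_{h-\eta}$ produces differences of order $2m-1$ rather than $m$, which would not match the seminorm. The identity above keeps the order equal to $m$ while moving every difference vector off the line $\R\xi$ into a cone of directions of full measure.
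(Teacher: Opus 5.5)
Your proof is correct, and it takes a different route from the paper. The paper gives no argument: it quotes the estimate from Triebel's monograph, where it belongs to the difference characterizations of the \emph{inhomogeneous} spaces $B^s_{p,q}$.

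All three of your steps check out. In Step 1, the exchange $j\leftrightarrow k$ preserves the sign $(-1)^{m+j-k}$, and the two $j=0$ terms are $\Delta^m_0 f=0$ and $\Delta^m_h f(x)$. In Step 2, for $\eta\in B(t\xi,t/2)$ one has $\verti{j\eta}\in(jt/2,3jt/2)$ and $\verti{t\xi+j\eta}\in(t+jt/2,t+3jt/2)$. So the annulus $\{t/2\le\verti{z}\le(2m+1)t\}$ is right, and the Jacobian factors $j^{-N}\le 1$ are harmless. Step 3 is a plain Tonelli computation.

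What your approach buys is a self-contained, elementary proof with several advantages:
\begin{itemize}
\item It applies directly to every measurable $f$, hence to all of $\dot{W}^{s,p}$.
\item It does not need to pass from an inhomogeneous statement to a homogeneous one by dilation.
\item It does not need to approximate functions that are not in $L^p$.
\item The uniformity in $\xi$ is explicit.
\item It never uses that $s$ is non-integer or that $m=\lfloor s\rfloor+1$; any $m\ge 1$ and $s>0$ work.
\end{itemize}
The citation is shorter, but it leaves those reductions to the reader.

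One minor inaccuracy in your closing heuristic, which plays no role in the proof: writing $\tau_h=\tau_\eta\tau_{h-\eta}$ naively gives mixed differences $\Delta_\eta^{k}\Delta_{h-\eta}^{m-k}$ of total order $m$ in two directions. It does not give pure differences of order $2m-1$. The real issue is that such mixed differences are not controlled by pure ones for fixed increments, and that is exactly what your identity plus averaging circumvents.
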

\begin{lemma}\label{no constant direction} (\cite[Lemma 2.12] {bullion2026higher})
    Let $1 \leq p < \infty$.
    \begin{enumerate}[(a)]
    \item Let $s$ be non-integer.  If $f \in \dot{W}^{s,p}$  is such that
    \begin{flalign*}
        \inf_{\xi \in \mathbb{S}^{N-1}} \int_{0}^{\infty}t^{-sp-1} \vertii{\Delta_{t\xi}^{\lfloor s \rfloor +1} f}_{L^p}^p dt =0,
    \end{flalign*}
    then $\verti{f}_{W^{s,p}}=0$.
    \item Let $ s $ be an integer.
    If $f \in \dot{W}^{s,p}$ is such that 
    \begin{flalign*} 
    \inf_{\xi \in \mathbb{S}^{N-1}} \int_{\R^N} \verti{\partial_{\xi}^s f(x)}^p \ dx = 0,
    \end{flalign*}
    then $\verti{f}_{W^{s,p}}=0$.
\end{enumerate}
\end{lemma}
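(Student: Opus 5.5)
The plan is to turn the vanishing infimum into a direction $\xi_0$ along which the directional energy is exactly zero. Then I show that this forces $f$ to be a polynomial along the lines parallel to $\xi_0$. Finally, finiteness of $\verti{f}_{W^{s,p}}$ should force all the relevant differences or derivatives to vanish.

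\emph{Step 1: producing a zero-energy direction.} Choose $\xi_n\in\mathbb{S}^{N-1}$ along which the directional energy tends to $0$. By compactness of the sphere, pass to a subsequence with $\xi_n\to\xi_0$.

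In the integer case (b), $\partial^s_{\xi}f=D^sf(\xi,\dots,\xi)$ is a polynomial in $\xi$ with coefficients in $L^p$, because $\vertii{D^s f}\in L^p$. Hence $\xi\mapsto\vertii{\partial^s_\xi f}_{L^p}$ is continuous, and $\partial^s_{\xi_0}f=0$ a.e.

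In the fractional case (a), set $m=\lfloor s\rfloor+1$. Since $f\in L^1_{loc}$, for each $t$ we have $\Delta^m_{t\xi_n}f\to\Delta^m_{t\xi_0}f$ in $L^1_{loc}$, by continuity of translations in $L^1_{loc}$. This convergence is locally uniform in $t$, so it also holds in $L^1_{loc}((0,\infty)\times\R^N)$. After a further subsequence we get a.e. convergence in $(t,x)$. Fatou's lemma applied to $t^{-sp-1}\verti{\Delta^m_{t\xi_n}f(x)}^p$ then gives zero energy in direction $\xi_0$. Consequently $\Delta^m_{t\xi_0}f=0$ a.e. for a.e. $t>0$.

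\emph{Step 2: polynomial structure along $\xi_0$.} Rotate so that $\xi_0=e_1$ and write $x=(\tau,x')$.

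In the fractional case, the vanishing of $\Delta^m_{te_1}f$ for a.e. $t$ means that for a.e. $x'$ the function $\tau\mapsto f(\tau,x')$ has vanishing $m$-th differences. By a standard mollification argument (mollify in $\tau$, then use that a smooth function with all $m$-th differences zero is a polynomial) this function coincides a.e. with a polynomial of degree $<m$. So
\[
f(\tau,x')=\sum_{k<m}a_k(x')\tau^k \quad \text{a.e.}
\]
In the integer case the same conclusion, with degree $<s$, follows from $\partial_1^s f=0$ in the distributional sense. Here $f\in W^{s,1}_{loc}$, and mollification commutes with $\partial_1^s$.

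\emph{Step 3: the seminorm vanishes.} Fix $h$. The function $\Delta^m_h f$ is again of the form $\sum_{k<m}b_k^h(x')\tau^k$, since translating in $\tau$ preserves polynomials of degree $<m$ and translating in $x'$ only changes the coefficients. For a.e. $h$ we know $\Delta^m_hf\in L^p(\R^N)$, because $\verti{f}_{W^{s,p}}<\infty$. By Fubini, for a.e. $x'$ this polynomial in $\tau$ lies in $L^p(\R)$, so it is identically zero. Hence $\Delta_h^mf=0$ a.e. for a.e. $h$, which gives $\verti{f}_{W^{s,p}}=0$.

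In the integer case, $D^sf(x_1,\dots,x_s)$ is likewise polynomial in $\tau$ with coefficients depending on $x'$: differentiate the representation, with the coefficients $a_k$ in $W^{s,1}_{loc}$ in $x'$, obtained by solving a Vandermonde system. Since $\vertii{D^sf}\in L^p$, the same Fubini argument gives $D^sf=0$ a.e.

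\emph{Main obstacle.} I expect the technical core to be Step 2 in the fractional case. One must pass from "$\Delta^m_{te_1}f=0$ for a.e. $t$" to a genuine polynomial representation with measurable coefficients, while handling null sets carefully. The first thing I would try is to mollify $f$ in all variables. This preserves the identity for every $t$, by continuity in $t$. The smooth version then satisfies $\partial_1^m(f*\eta_\varepsilon)=0$, hence is a polynomial in $\tau$ of degree $<m$. Letting $\varepsilon\to0$ in $L^1_{loc}$, and using that polynomials of bounded degree form a finite-dimensional, hence closed, space on each line, gives the representation.
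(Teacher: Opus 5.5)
Your proof is correct. The paper gives no proof of this lemma; it imports it from the author's earlier work, so there is no in-text argument to compare against. Judged on its own, your three steps work: a zero-energy direction, then polynomial dependence of degree $<m$ along lines parallel to $\xi_0$, then the fact that a polynomial in $L^p(\R)$ vanishes.

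The closest tools in this paper are the lower-semicontinuity step in Lemma \ref{far from 0}(a), which goes through Lemma \ref{dir weak conv}, and the continuity result Lemma \ref{c0}. Both are set in $\mathring{W}^{s,p}$ and use $sp<N$, through the Sobolev embedding and density of $\mathscr{S}$. Your Step 1 uses only $f\in L^1_{loc}$ and continuity of translations. That is the right level of generality, since the lemma is stated for all $1\le p<\infty$ without the restriction $sp<N$. In the integer case, your continuity of $\xi\mapsto\partial^s_\xi f$ in $L^p$ is exactly the Lipschitz bound used in Lemma \ref{weak int conv}.

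Two routine points should be tightened.

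\textbf{The limit $\varepsilon\to 0$ in Step 2.} The phrase ``closed on each line'' needs to be made precise, and either of the following does it.
\begin{enumerate}
\item Pick $\psi_k\in C_c^\infty(\R)$ with $\int\psi_k(\tau)\tau^j\,d\tau=\delta_{jk}$ for $j,k<m$. The functions of the form $\sum_{k<m}a_k(x')\tau^k$ are exactly the fixed points of the map $g\mapsto\sum_k\bigl(\int\psi_k(\tau)g(\tau,\cdot)\,d\tau\bigr)\tau^k$. This map is continuous on $L^1_{loc}(\R^N)$, so its fixed-point set is closed.
\item Alternatively, pass to a subsequence that converges in $L^1_{loc}(\R)$ on a.e.\ line.
\end{enumerate}

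\textbf{Coefficient regularity in (b).} Evaluating $f(\tau_j,\cdot)$ pointwise in a Vandermonde system gives $W^{s,1}_{loc}$ coefficients only if the nodes $\tau_j$ are chosen among the a.e.\ good slices; alternatively, integrate against the $\psi_k$ instead.

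You can avoid needing a global representation of $f$ in both cases.
\begin{itemize}
\item In (b), for $|\alpha|=s$ the function $g=\partial^\alpha f\in L^p$ satisfies $\partial_1^s g=\partial^\alpha\partial_1^s f=0$ in $\mathscr{D}'$. Your Step 2 applies to any $L^1_{loc}$ function, so $g$ is a polynomial in $\tau$ on a.e.\ line, and therefore $g=0$.
\item In (a), for a.e.\ $h$ the function $g=\Delta^m_h f$ lies in $L^p$ and satisfies $\Delta^m_{te_1}g=\Delta^m_h\Delta^m_{te_1}f=0$ for a.e.\ $t$. The one-dimensional argument on a.e.\ line then gives $g=0$ directly.
\end{itemize}
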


\begin{lemma}\label{dir weak conv}
   Let $s$ be non-integer and $1\leq p<\infty$ satisfy $sp<N$.  Let $(f_n) \subset \mathring{W}^{s,p}$, $f \in \mathring{W}^{s,p}$, and consider the maps
    \bes
    \ba
    &F_{n,\xi}: (t,x) \mapsto t^{-s-1/p}\Delta^{\lfloor s \rfloor +1}_{t\xi}f_n(x) , \ \xi \in \mathbb{S}^{N-1}, n \in \N, \\ 
    & F_{\xi}: (t,x) \mapsto t^{-s-1/p}\Delta^{\lfloor s \rfloor +1}_{t\xi}f(x) , \ \xi \in \mathbb{S}^{N-1}.
    \ea
    \ees
    If $f_n \rightharpoonup f$ in $\mathring{W}^{s,p}$ and $\xi_n \to \xi \in \mathbb{S}^{N-1}$, then $F_{n, \xi_n} \rightharpoonup F_{\xi}$ in $L^p(\R^{+} \times \R^N)$ when $p>1$,  $F_{n, \xi_n} \overset{*}{\rightharpoonup}  F_{\xi}$ when $p=1$.  
\end{lemma}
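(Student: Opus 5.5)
The plan is to first reduce to $L^p$ boundedness of $F_{n,\xi_n}$, then identify every weak (respectively weak-$*$) limit point with $F_\xi$ by testing against smooth compactly supported functions.

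\textbf{Boundedness.} By Lemma \ref{dir domination},
\[
\vertii{F_{n,\xi_n}}_{L^p(\R^+\times\R^N)}^p=\int_0^\infty t^{-sp-1}\vertii{\Delta^{\lfloor s\rfloor+1}_{t\xi_n}f_n}_{L^p}^p\,dt\le C\verti{f_n}_{W^{s,p}}^p .
\]
Definition \ref{w conv frac} gives $\sup_n\verti{f_n}_{W^{s,p}}<\infty$, so $(F_{n,\xi_n})$ is bounded in $L^p(\R^+\times\R^N)$. When $p>1$, reflexivity makes every subsequence have a further subsequence converging weakly to some $G$. When $p=1$, view $F_{n,\xi_n}$ as a bounded sequence of finite measures, or as functionals on $C_c(\R^+\times\R^N)$. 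Banach--Alaoglu then gives weak-$*$ subsequential limits $G$ among Radon measures on the open set $\R^+\times\R^N$. It therefore suffices to show $G=F_\xi$. Testing against $C_c^\infty((0,\infty)\times\R^N)$ is enough, since this class is dense in $L^{p'}$ and, for $p=1$, in $C_c$ for the sup norm with controlled supports.

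\textbf{Identification.} Fix $\phi\in C_c^\infty((0,\infty)\times\R^N)$ supported in $[a,b]\times B(0,R)$ with $0<a$. Write $m=\lfloor s\rfloor+1$ and expand
\[
\Delta^{m}_{h}g(x)=\sum_{k=0}^{m}\binom{m}{k}(-1)^{m-k}g(x+kh).
\]
Change variables $x\mapsto x-kt\xi_n$ in each term. This gives
\[
\int F_{n,\xi_n}\phi=\sum_k c_k\int_a^b\!\!\int t^{-s-1/p} f_n(y)\,\phi(t,y-kt\xi_n)\,dy\,dt .
\]
The functions $\psi_{n,k}(t,y)=\phi(t,y-kt\xi_n)$ are supported in $[a,b]\times B(0,R+mb)$, a set independent of $n$. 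They converge uniformly to $\psi_k(t,y)=\phi(t,y-kt\xi)$, because $\phi$ is Lipschitz and $\xi_n\to\xi$. Moreover $f_n\to f$ in $L^p(B(0,R+mb))$ and $t^{-s-1/p}$ is bounded on $[a,b]$. Hence each term converges to the corresponding one with $f,\xi$. Indeed,
\[
\Big|\int f_n\psi_{n,k}-\int f\psi_k\Big|\le \vertii{f_n-f}_{L^1(K)}\vertii{\psi_{n,k}}_\infty+\vertii{f}_{L^1(K)}\vertii{\psi_{n,k}-\psi_k}_\infty ,
\]
where $K=[a,b]\times B(0,R+mb)$ and the $L^1(K)$ norms are controlled by the local $L^p$ norms. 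Undoing the change of variables yields $\int F_{n,\xi_n}\phi\to\int F_\xi\phi$, so $G=F_\xi$ on such test functions.

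\textbf{Conclusion.} The limit point is unique, so the whole sequence converges weakly (respectively weak-$*$) to $F_\xi$. Here $F_\xi\in L^p$ by Lemma \ref{dir domination}, since $f\in\mathring W^{s,p}$. The main subtlety is the $p=1$ case: one must be precise about the topology and pass from $C_c^\infty$ test functions to $C_c$ (or $C_0$) by density, using the uniform mass bound. There is no real analytic obstacle, because the translated test functions stay in a fixed compact set bounded away from $t=0$.
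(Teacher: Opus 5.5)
Your proof is correct, but it handles the test functions differently from the paper. The paper reduces to $\varphi\in C_c^\infty([0,\infty)\times\R^N)$, so its test functions may be nonzero up to $t=0$. After moving the finite difference onto $\varphi$, it controls the singular weight $t^{-s-1/p}$ near $t=0$ through the cancellation $\verti{\Delta^{\lfloor s\rfloor+1}_{-t\xi_n}\varphi(t,x)}\le \vertii{D^{\lfloor s\rfloor+1}\varphi}_{L^\infty}t^{\lfloor s\rfloor+1}$, which makes the resulting exponent $\lfloor s\rfloor+1-s-1/p$ greater than $-1$. It then concludes by dominated convergence, after extracting a subsequence with $f_n\to f$ a.e.\ and $\verti{f_n}^p\le g$ for a locally integrable $g$, together with a subsequence-of-subsequence argument.

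You test only against $\phi$ supported in $\{t\ge a>0\}$, where the weight is bounded, so no cancellation is needed. You also replace dominated convergence by an elementary estimate combining the uniform convergence $\psi_{n,k}\to\psi_k$ with $L^1_{\text{loc}}$ convergence of $f_n$, so no a.e.\ convergence or extraction is required.

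For $p>1$ the two routes give exactly the same conclusion, since $C_c^\infty((0,\infty)\times\R^N)$ is already dense in $L^{p'}$. For $p=1$ there is a difference in strength. Your proof gives weak-$*$ convergence of measures on the open half-space $(0,\infty)\times\R^N$. The paper's proof, thanks to the $t^{\lfloor s\rfloor+1}$ cancellation, also allows test functions that do not vanish at $t=0$, so the signed weak-$*$ limit on $[0,\infty)\times\R^N$ has no part on $\{t=0\}$. That extra strength is never used: the lemma is only invoked in Lemmas \ref{far from 0} and \ref{weak lsc}, via convergence in $\mathscr{D}'((0,\infty)\times\R^N)$ and the resulting lower semicontinuity of the $L^p$ norm, and your version suffices for that.
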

\begin{proof}

 First note that, for each $g \in \dot{W}^{s,p}$, $\varphi \in L^{p'}(\R^+ \times \R^N)$ ($1/p+1/p'=1)$, and $\omega \in \mathbb{S}^{N-1},$ we have
\bes\label{apriori}
\ba
\int_{0}^{\infty}\int_{\R^N}t^{-s-1/p}\Delta^{\lfloor s \rfloor +1}_{t\omega} g(x) \varphi(t,x) \ dxdt &\leq \left(\int_{0}^{\infty}t^{-sp-1}\vertii{\Delta^{\lfloor s \rfloor +1}_{t\omega} g}_{L^p} \ dt \right)^{1/p}\vertii{\varphi}_{L^{p'}(\R^+ \times \R^N)} \\ & \leq C \verti{g}_{W^{s,p}} \vertii{\varphi}_{L^{p'}(\R^+ \times \R^N)},
\ea
\ees
where we used H\"older's inequality for the first inequality, and Lemma \ref{dir domination} for the second one.
Therefore, by density of $C_c^{\infty}$ in $L^{p'}$ when $p>1$, in $C_c$ when $p=1$, it suffices to show that 
\be \label{weak smooth conv}
\int_{0}^{\infty}\int_{\R^N}t^{-s-1/p}\Delta^{\lfloor s \rfloor +1}_{t\xi_n} f_n(x) \varphi(t,x) \ dxdt \to  \int_{0}^{\infty}\int_{\R^N}t^{-s-1/p}\Delta^{\lfloor s \rfloor +1}_{t\xi} f(x) \varphi(t,x) \ dxdt,
\ee
as $n \to \infty$,
for each $\varphi \in C_c^{\infty}([0,\infty) \times \R^N)$ (since the maps $F_{n,\xi_n}$ are bounded in $L^p(\R^{+}\times \R^N$), as a consequence of the boundedness of $(f_n)$ in $\dot{W}^{s,p}$, by Lemma \ref{dir domination}.

Let $\varphi \in C_c^{\infty}([0,\infty) \times \R^N)$. It is supported in $[0,a)\times B(0,R)$, for some $a,R>0$. Since $f_n \rightharpoonup f$ in $\mathring{W}^{s,p}$, we have $f_n \to f$ in $L^p(B(0,R))$. Therefore, there exists  $g\in L^1(B(0,R))$ such that, up to a subsequence, 
\be\label{dom}
\verti{f_n}^p(x)  \leq g(x),
\ee
for a.e. $x\in B(0,R)$.
By a standard argument, it suffices to prove \eqref{weak smooth conv} along a sequence satisfying \eqref{dom}.

For each $n \in \N$,  we notice that
\bes 
\ba
\int_{0}^{\infty}&\int_{\R^N}t^{-s-1/p}\Delta^{\lfloor s \rfloor +1}_{t\xi_n} f_n(x) \varphi(t,x) \ dxdt = \int_{0}^{\infty}\int_{\R^N}t^{-s-1/p} f_n(x) \Delta^{\lfloor s \rfloor +1}_{-t\xi_n}\varphi(t,x) \ dxdt.
\ea
\ees
 We have
\bes 
t^{-s-1/p} f_n(x) \Delta^{\lfloor s \rfloor +1}_{-t\xi_n}\varphi(t,x) \to t^{-s-1/p} f(x) \Delta^{\lfloor s \rfloor +1}_{-t\xi}\varphi(t,x), \ \text{for a.e. } \ (t,x)  \in \R^{+}\times \R^N.
\ees
  We also have
\bes \ba \label{1st dom}
t^{-s-1/p} \verti{f_n(x) \Delta^{\lfloor s \rfloor +1}_{-t\xi}\varphi(t,x)} &\leq  \vertii{D^{\lfloor s \rfloor +1} \varphi }_{L^{\infty}}\verti{f_n(x)} t^{\lfloor s \rfloor -s+1-1/p} \mathds{1}_{[0,a) \times B(0,R+\left(\lfloor s \rfloor +1) a\right)}(t,x)
\\ & \leq C \verti{g(x)}^{1/p} t^{\lfloor s \rfloor -s+1-1/p}  \mathds{1}_{[0,a) \times B(0,R+\left(\lfloor s \rfloor +1) a\right)}(t,x)
\ea
\ees
for a.e. $(t,x) \in \R^{+} \times \R^N$, for each $n \in \N $.
The right-hand side in the last inequality is $L^1(\R^{+} \times \R^N)$. Therefore, dominated convergence yields \eqref{weak smooth conv}. 
\end{proof}

In the integer case, we have the following analogue of Lemma \ref{dir weak conv}.
\begin{lemma} \label{weak int conv}
    Let $s$ be an integer and $1 \leq p < \infty$.  If $(f_n) \subset \dot{W}^{s,p}$ is such that $f_n \rightharpoonup f$ in $\dot{W}^{s,p}$, and $(\xi_n) \subset \mathbb{S}^{N-1}$ is such that $ \xi_n \to \xi$, then $\partial^s_{\xi_n} f_n \rightharpoonup \partial^s_{\xi} f$ in $L^p$.
\end{lemma}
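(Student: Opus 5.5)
The plan is to write $\partial^s_{\xi}$ as a homogeneous polynomial of degree $s$ in $\xi$ whose coefficients are the partial derivatives $\partial^{\alpha}$, $\verti{\alpha}=s$. The hypothesis, which only concerns finitely many fixed directions, then gives weak convergence of every $\partial^{\alpha}f_n$, and the moving direction $\xi_n$ is handled by a strong-convergence perturbation.

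\textbf{Step 1: polynomial expansion.} Since $D^s_x f$ is a symmetric $s$-linear form, for every $\eta\in\mathbb{S}^{N-1}$ and a.e. $x$ we have
\bes
\partial^s_{\eta} f(x)=D^s_xf(\eta,\dots,\eta)=\sum_{\verti{\alpha}=s}\frac{s!}{\alpha!}\,\eta^{\alpha}\,\partial^{\alpha}f(x).
\ees
So $\eta\mapsto\partial^s_\eta f(x)$ is a homogeneous polynomial of degree $s$ whose coefficients are the $\partial^\alpha f(x)$, weighted by the nonzero constants $s!/\alpha!$.

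\textbf{Step 2: weak convergence of each $\partial^{\alpha}f_n$.} Let $\mathcal{P}_s$ be the space of homogeneous polynomials of degree $s$ on $\R^N$. It is classical that the powers $x\mapsto(\eta\cdot x)^s$, $\eta\in\mathbb{S}^{N-1}$, span $\mathcal{P}_s$. One way to see this: if a linear functional on $\mathcal{P}_s$ kills all such powers, then pairing with the apolar inner product shows it is zero. Since $\mathcal{P}_s$ is finite dimensional, we can fix finitely many directions $\eta_1,\dots,\eta_M\in\mathbb{S}^{N-1}$ such that the vectors $\big((s!/\alpha!)\,\eta_k^{\alpha}\big)_{\verti{\alpha}=s}$, $k=1,\dots,M$, span $\R^{\#\{\verti{\alpha}=s\}}$. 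Inverting this linear system gives constants $c_{\alpha,k}$, depending only on $s$, $N$ and the $\eta_k$, such that
\bes
\partial^{\alpha} g=\sum_{k=1}^M c_{\alpha,k}\,\partial^s_{\eta_k} g,\qquad \fo g\in \dot{W}^{s,p},\ \verti{\alpha}=s.
\ees
By the definition of $f_n\rightharpoonup f$ in $\dot{W}^{s,p}$, each $\partial^s_{\eta_k}f_n$ converges weakly to $\partial^s_{\eta_k}f$ in $L^p$. Hence $\partial^{\alpha}f_n\rightharpoonup\partial^{\alpha}f$ in $L^p$ for every $\verti{\alpha}=s$. In particular, by the uniform boundedness principle in the Banach space $L^p$ (valid also for $p=1$), we get $\sup_n\vertii{\partial^{\alpha}f_n}_{L^p}<\infty$.

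\textbf{Step 3: moving directions.} Split
\bes
\partial^s_{\xi_n}f_n-\partial^s_{\xi}f=\sum_{\verti{\alpha}=s}\frac{s!}{\alpha!}(\xi_n^{\alpha}-\xi^{\alpha})\,\partial^{\alpha}f_n+\big(\partial^s_{\xi}f_n-\partial^s_{\xi}f\big).
\ees
The first sum tends to $0$ strongly in $L^p$: its $L^p$ norm is at most $C\max_{\alpha}\verti{\xi_n^{\alpha}-\xi^{\alpha}}\,\sup_n\max_\alpha\vertii{\partial^{\alpha}f_n}_{L^p}$, and $\xi_n^\alpha\to\xi^\alpha$ because $\xi_n\to\xi$. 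The second term converges weakly to $0$ by hypothesis, or equally by Step 2. Since strong convergence implies weak convergence, the claim follows.

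The only point that requires care is Step 2, namely spanning the coefficient space by finitely many pure powers $(\eta_k\cdot x)^s$ and extracting the uniform $L^p$ bound needed in Step 3. Both are standard linear algebra and functional analysis, so I do not expect any real obstacle.
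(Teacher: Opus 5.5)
Your proof is correct and follows the same route as the paper. You split $\partial^s_{\xi_n}f_n-\partial^s_{\xi}f$ into the fixed-direction term $\partial^s_{\xi}f_n-\partial^s_{\xi}f$, which converges weakly by hypothesis, and a perturbation term bounded in $L^p$ by $C\verti{\xi_n-\xi}\sup_n\vertii{D^sf_n}_{L^p}$; the paper gets this bound by telescoping the multilinear form $D^sf_n$, while you use the multinomial expansion in the $\partial^{\alpha}f_n$.

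Your Step 2 adds something the paper leaves out. The paper asserts that its Lipschitz constant is independent of $n$, which amounts to using $\sup_n\vertii{D^sf_n}_{L^p}<\infty$, but it does not derive this from the purely directional definition of weak convergence. Your polarization argument combined with uniform boundedness supplies exactly that justification.
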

\begin{proof}
    This is a straightforward consequence of the inequality
    \be \label{Lip}
     \vertii{\partial_{\xi}^s f_n - \partial^s_{\omega} f_n}_{L^p} \leq C \verti{\xi - \omega}, \fo \xi, \omega \in \mathbb{S}^{N-1},
    \ee
    where $C$ is independent of $n$,$\xi$, and $\omega$. 

    We obtain \eqref{Lip} by observing that
    \bes
    \ba
    \vertii{\partial_{\xi}^s f_n - \partial^s_{\omega} f_n}_{L^p}&= \vertii{D^sf_n (\xi, \dots,\xi) - D^sf_n (\omega, \dots,\omega)}_{L^p} \\ & \leq \sum_{k=0}^{s-1} \vertii{D^s f_n([\omega]^k, [\xi]^{s-k}) - D^sf_n([\omega]^{k+1}, [\xi]^{s-(k+1)})}_{L^p}  
    \ea
    \ees
 where, for each $0 \leq k\leq s-1$, 
 \bes
 ([\omega]^k, [\xi]^{s-k})\coloneq (\underbrace{\omega,\dots,\omega}_{k \ \text{ times}},\underbrace{\xi,\dots,\xi}_{(s-k) \ \text{ times}}).
 \ees
  We thus have
 \bes
  \vertii{\partial_{\xi}^s f_n - \partial^s_{\omega} f_n}_{L^p} \leq s \verti{\omega-\xi} \vertii{D^s f_n}_{L^p}. \qedhere
 \ees
\end{proof}
\begin{lemma}\label{far from 0}
    Let $s>0$, $1 \leq p < \infty$, $(f_n) \subset \dot{W}^{s,p}$, and $f \in \dot{W}^{s,p}$ be such that $\verti{f}_{W^{s,p}} \neq 0$. 
    \begin{enumerate}[(a)]
        \item If $s$ non-integer is such that $sp<N$, $(f_n) \subset \mathring{W}^{s,p}$, $f \in \mathring{W}^{s,p}$, and $f_n \rightharpoonup f$ in $\mathring{W}^{s,p}$, then 
        \bes
         \liminf_{n} \inf_{\xi \in \mathbb{S}^{N-1}} \int_{0}^{\infty} t^{-sp-1} \vertii{\Delta^{\lfloor s \rfloor +1}_{t\xi} f_n}_{L^p}^p \ dt >0.
         \ees
          \item If $s$ is an integer and $f_n \rightharpoonup f$ in $\dot{W}^{s,p}$, then
        \bes \liminf_{n}\inf_{\xi \in \mathbb{S}^{N-1}} \vertii{\partial^s_{\xi} f_n}_{L^p} >0.
        \ees
    \end{enumerate}
   
\end{lemma}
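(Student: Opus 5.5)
We argue by contradiction, combining weak lower semicontinuity along a sequence of directions with the compactness of $\mathbb{S}^{N-1}$ and Lemma \ref{no constant direction}. We treat (a); (b) is identical with Lemma \ref{weak int conv} replacing Lemma \ref{dir weak conv}.

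Suppose the conclusion of (a) fails. Then, along a subsequence (not relabeled), we have
\bes
\inf_{\xi \in \mathbb{S}^{N-1}} \int_0^\infty t^{-sp-1}\vertii{\Delta^{\lfloor s \rfloor+1}_{t\xi} f_n}_{L^p}^p\, dt \to 0 .
\ees
For each $n$ we pick $\xi_n\in\mathbb{S}^{N-1}$ that almost realizes this infimum, say within $1/n$. Then
\bes
\vertii{F_{n,\xi_n}}_{L^p(\R^+\times\R^N)}^p=\int_0^\infty t^{-sp-1}\vertii{\Delta^{\lfloor s \rfloor+1}_{t\xi_n} f_n}_{L^p}^p\, dt\to 0 .
\ees
By compactness of the sphere we extract a further subsequence with $\xi_n\to\xi$.

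Lemma \ref{dir weak conv} gives $F_{n,\xi_n}\rightharpoonup F_\xi$ in $L^p(\R^+\times\R^N)$ when $p>1$, and weak-$*$ convergence (as measures) when $p=1$. In either case the norm, respectively the total variation, is weakly lower semicontinuous. Hence
\bes
\int_0^\infty t^{-sp-1}\vertii{\Delta^{\lfloor s \rfloor+1}_{t\xi} f}_{L^p}^p\, dt=\vertii{F_\xi}^p_{L^p}\le \liminf_n \vertii{F_{n,\xi_n}}^p_{L^p}=0 .
\ees
The infimum over directions for $f$ is therefore $0$, and Lemma \ref{no constant direction}(a) gives $\verti{f}_{W^{s,p}}=0$. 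This contradicts the hypothesis.

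For (b), Lemma \ref{weak int conv} gives $\partial^s_{\xi_n}f_n\rightharpoonup\partial^s_\xi f$ in $L^p$, and weak lower semicontinuity of the $L^p$ norm gives $\vertii{\partial^s_\xi f}_{L^p}=0$. Lemma \ref{no constant direction}(b) then yields the contradiction. When $p=1$, weak convergence in $L^1$ still gives lower semicontinuity of the norm by duality with $L^\infty$.

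The only delicate point is $p=1$ in (a). There we must check that weak-$*$ convergence, understood as testing against $C_c$ as in the proof of Lemma \ref{dir weak conv}, still gives $\vertii{F_\xi}_{L^1}\le\liminf_n\vertii{F_{n,\xi_n}}_{L^1}$. This holds because $\vertii{F_\xi}_{L^1}=\sup\{\int F_\xi\varphi \,;\ \varphi\in C_c,\ \vertii{\varphi}_\infty\le1\}$.
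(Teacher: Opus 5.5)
Your proposal is correct and follows essentially the same argument as the paper. You extract near-minimizing directions $\xi_n\to\xi$, apply Lemma \ref{dir weak conv} (resp.\ Lemma \ref{weak int conv}) together with lower semicontinuity of the norm to show that $f$ has vanishing directional energy in the direction $\xi$, and conclude with Lemma \ref{no constant direction}. Your explicit justification of lower semicontinuity for $p=1$, via the supremum over $\varphi\in C_c$ with $\vertii{\varphi}_{\infty}\le 1$, is a welcome detail that the paper leaves implicit in its appeal to distributional convergence.
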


\begin{proof}
(a) Let $(f_n)$ be such that $ f_n \rightharpoonup f$ in $\mathring{W}^{s,p}$ and \bes \liminf_n \inf_{ \ \xi \in \mathbb{S}^{N-1}} \int_{0}^{\infty} t^{-sp-1} \vertii{\Delta^{\lfloor s \rfloor +1}_{t\xi} f_n}_{L^p}^p \ dt=0. \ees We find sequences $(\xi_k) \subset \mathbb{S}^{N-1}$ and $(n_k) \subset \N$ such that
   \bes \label{strong conv}
     \vertii{F_{n_k, \xi_k}}^p_{L^p(\R^{+}\times \R^N)}=\int_{0}^{\infty} t^{-sp-1} \vertii{\Delta^{\lfloor s \rfloor +1}_{t\xi_k} f_{n_k}}_{L^p}^p \ dt \to 0,
   \ees
   and $\xi_k \to \xi$, as $k \to \infty$. (Here we use the notation $F_{n,\xi}$ introduced in Lemma \ref{dir weak conv}).  
   
   By Lemma \ref{dir weak conv}, we also have $F_{n_k,\xi_k} \to F_{\xi}$ in $\mathscr{D'}((0,\infty) \times \R^N)$,  hence
   \be \label{D lsc}
   \int_{0}^{\infty} t^{-sp-1} \vertii{\Delta^{\lfloor s \rfloor +1}_{t\xi} f}_{L^p}^p \ dt= \vertii{F_{\xi}}^p_{L^p(\R^{+}\times \R^N)}  \leq \liminf_k \vertii{F_{n_k, \xi_k}}^p_{L^p(\R^{+}\times \R^N)}=0.
   \ee 
   Therefore, we find that $\verti{f}_{W^{s,p}}=0$ by Lemma \ref{no constant direction}.

\noindent (b) We argue as in (a), using Lemma \ref{weak int conv} instead of Lemma \ref{dir weak conv}.
\end{proof}
As a simple consequence of Lemma \ref{far from 0}, we have the following result.
\begin{lemma}\label{weak lsc}
   Let $s>0$, $1 \leq p < \infty$, $(f_n) \subset \dot{W}^{s,p}$, and $f \in \dot{W}^{s,p}$ be such that $\verti{f}_{W^{s,p}} \neq 0$. 
    \begin{enumerate}[(a)]
        \item If $s$ non-integer is such that $sp<N$, $(f_n) \subset \mathring{W}^{s,p}$, $f \in \mathring{W}^{s,p}$, and $f_n \rightharpoonup f$ in $\mathring{W}^{s,p}$, then 
        \bes
         \mathscr{E}_{s,p}(f) \leq \liminf_n \mathscr{E}_{s,p}(f_n).
         \ees
          \item The same conclusion holds if $s$ is an integer and $f_n \rightharpoonup f$ in $\dot{W}^{s,p}$, then
          \bes
           \mathscr{E}_{s,p}(f) \leq \liminf_n \mathscr{E}_{s,p}(f_n).
          \ees
    \end{enumerate}
\end{lemma}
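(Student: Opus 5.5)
The plan is to prove lower semicontinuity direction by direction and then pass to the limit in the outer integral over the sphere, where the dominating function comes from Lemma \ref{far from 0}.

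\textbf{Reduction.} Write $u_n(\xi)\coloneq \int_0^\infty t^{-sp-1}\vertii{\Delta^{\lfloor s\rfloor+1}_{t\xi}f_n}_{L^p}^p\,dt$ and $u(\xi)$ for the corresponding quantity for $f$ in case (a); in case (b) set $u_n(\xi)\coloneq\vertii{\partial^s_\xi f_n}_{L^p}^p$ and $u(\xi)\coloneq\vertii{\partial^s_\xi f}_{L^p}^p$. Then
\bes
\mathscr{E}_{s,p}(f_n)=\Bigl(\int_{\mathbb{S}^{N-1}}u_n^{-N/sp}\,d\mathscr{H}^{N-1}\Bigr)^{-s/N}.
\ees
The map $t\mapsto t^{-s/N}$ is decreasing, so it suffices to show
\bes
\limsup_n \int_{\mathbb{S}^{N-1}} u_n^{-N/sp} \le \int_{\mathbb{S}^{N-1}} u^{-N/sp}.
\ees
Passing to a subsequence, we may assume $\mathscr{E}_{s,p}(f_n)$ converges to its $\liminf$.

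\textbf{Pointwise step.} For each fixed $\xi$, we have $u(\xi)\le\liminf_n u_n(\xi)$. In case (a) this follows from Lemma \ref{dir weak conv} with $\xi_n=\xi$ and weak lower semicontinuity of the $L^p(\R^+\times\R^N)$ norm; for $p=1$ one uses weak-$*$ convergence of measures, exactly as in \eqref{D lsc}. Case (b) is the same, using Lemma \ref{weak int conv}. Since the exponent $-N/sp$ is negative, this gives
\bes
\limsup_n u_n(\xi)^{-N/sp}\le u(\xi)^{-N/sp},
\ees
with the convention $0^{-N/sp}=\infty$. However, $u(\xi)>0$ for every $\xi$ by Lemma \ref{no constant direction}, because $\verti{f}_{W^{s,p}}\neq0$.

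\textbf{Domination.} Lemma \ref{far from 0} gives $\delta>0$ and $n_0$ such that $\inf_\xi u_n(\xi)\ge\delta$ for all $n\ge n_0$; in case (b) take $\delta$ as a lower bound for $\inf_\xi\vertii{\partial^s_\xi f_n}_{L^p}^p$. Hence $u_n^{-N/sp}\le\delta^{-N/sp}$ for $n\ge n_0$, and this constant is integrable on the compact sphere. Lemma \ref{rev fatou} now applies:
\bes
\limsup_n\int u_n^{-N/sp}\le\int\limsup_n u_n^{-N/sp}\le\int u^{-N/sp}.
\ees
This yields $\mathscr{E}_{s,p}(f)\le\liminf_n\mathscr{E}_{s,p}(f_n)$.

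\textbf{Main obstacle.} The delicate point is the uniform lower bound on $u_n$, since without it the negative power could blow up on small sets and Fatou's lemma would go the wrong way. That bound is exactly what Lemma \ref{far from 0} provides, so it is available here. The remaining care is only to check that Lemma \ref{far from 0} is applied to the already-extracted subsequence; this is harmless because its conclusion is a $\liminf$ over the whole sequence.
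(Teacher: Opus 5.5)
Your proposal is correct and follows essentially the same route as the paper: directionwise lower semicontinuity (as in \eqref{D lsc}), a uniform lower bound on the directional energies from Lemma \ref{far from 0} to dominate the negative powers, then Lemma \ref{rev fatou} on the sphere before raising to the power $-sp/N$. The subsequence extraction is harmless but unnecessary, since the $\limsup$ bound on the spherical integrals already gives the $\liminf$ inequality for the full sequence.
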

\begin{proof}

(a) We have, as in \eqref{D lsc}, 
\be \label{2 lsc}
 \int_{0}^{\infty} t^{-sp-1} \vertii{\Delta^{\lfloor s \rfloor +1}_{t\xi} f}_{L^p}^p \ dt \leq \liminf_n  \int_{0}^{\infty} t^{-sp-1} \vertii{\Delta^{\lfloor s \rfloor +1}_{t\xi} f_n}_{L^p}^p \ dt, \ \fo \xi \in \mathbb{S}^{N-1}. 
\ee 

But $(f_n)$ also satisfies the conclusion of Lemma \ref{far from 0} (a), hence
\bes
\left(\int_{0}^{\infty} t^{-sp-1} \vertii{\Delta^{\lfloor s \rfloor +1}_{t\xi} f_n}_{L^p}^p \ dt \right)^{-N/sp} \leq C <\infty, \ \fo \xi \in \mathbb{S}^{N-1},
\ees
for each sufficiently large $n \in \N$. 
Therefore, Lemma \ref{rev fatou} yields
\bes
\ba
\limsup_n \int_{\mathbb{S}^{N-1}}& \left( \int_{0}^{\infty} t^{-sp-1} \vertii{\Delta^{\lfloor s \rfloor +1}_{t\xi} f_n}_{L^p}^p \ dt\right)^{-N/sp} \ d \mathscr{H}^{N-1}(\xi) \\ & \leq \int_{\mathbb{S}^{N-1}} \limsup_n \left( \int_{0}^{\infty} t^{-sp-1} \vertii{\Delta^{\lfloor s \rfloor +1}_{t\xi} f_n}_{L^p}^p \ dt\right)^{-N/sp} \ d \mathscr{H}^{N-1}(\xi)\\ & = \int_{\mathbb{S}^{N-1}} \left(\liminf_n \int_{0}^{\infty} t^{-sp-1} \vertii{\Delta^{\lfloor s \rfloor +1}_{t\xi} f_n}_{L^p}^p \ dt\right)^{-N/sp} \ d \mathscr{H}^{N-1}(\xi)
\\ & \leq \int_{\mathbb{S}^{N-1}} \left(\int_{0}^{\infty} t^{-sp-1} \vertii{\Delta^{\lfloor s \rfloor +1}_{t\xi} f}_{L^p}^p \ dt\right)^{-N/sp} \ d \mathscr{H}^{N-1}(\xi)
\ea
\ees
where we rely on \eqref{2 lsc} to obtain the last inequality.
Raising the last inequality to the $-sp/N$ power, we obtain the desired conclusion. The proof of (b) is similar. \qedhere

\end{proof}\begin{lemma}\label{not in 0}
Let $s>0$ and $1<p<\infty$.  Let $(f_n) \subset \dot{W}^{s,p}$ be such that $f_n \rightharpoonup f \neq 0 $ in $\dot{W}^{s,p}$ and $\mathscr{E}_{s,p}(f_n) \to \mathscr{E}_{s,p}(f) $, as $n \to \infty$.
\begin{enumerate}[(a)]
    
    \item If $s$ is non-integer such that $sp<N$ and $f_n \in \mathring{W}^{s,p}$, then $f_n \to f$ in $\dot{W}^{s,p}$. 
    \item If $s$ is an integer, then $f_n \to f $ in $\dot{W}^{s,p}$.
\end{enumerate}
\end{lemma}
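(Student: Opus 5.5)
We would follow a Radon--Riesz type strategy: upgrade the convergence of the affine energies to convergence of every directional energy, and then use uniform convexity of $L^p$ ($1<p<\infty$). We treat (a); (b) is identical with $\partial^s_\xi$ in place of $F_\xi$ and Lemma \ref{weak int conv} in place of Lemma \ref{dir weak conv}. Write
\bes
u_n(\xi)\coloneq \vertii{F_{n,\xi}}^p_{L^p(\R^+\times\R^N)}, \qquad u(\xi)\coloneq \vertii{F_{\xi}}^p_{L^p(\R^+\times\R^N)},
\ees
so that $\mathscr{E}_{s,p}(f)^{-N/s}=\int_{\mathbb{S}^{N-1}} u^{-N/sp}$. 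Since $f\neq 0$ in $\mathring{W}^{s,p}$ we have $\verti{f}_{W^{s,p}}\neq 0$. Lemma \ref{far from 0} then gives $\inf_\xi u_n\geq c>0$ for large $n$, and Lemma \ref{dir domination} gives $u_n\leq C$ uniformly.

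\textbf{Step 1 (pointwise convergence of directional energies).} By \eqref{2 lsc}, $u(\xi)\leq \liminf_n u_n(\xi)$ for every $\xi$. Set $v_n\coloneq u_n^{-N/sp}$ and $v\coloneq u^{-N/sp}$. These are uniformly bounded, and $\limsup_n v_n\leq v$ pointwise. The hypothesis $\mathscr{E}_{s,p}(f_n)\to\mathscr{E}_{s,p}(f)$ gives $\int v_n\to\int v$. Applying Fatou to the nonnegative functions $v-v_n+\sup_m|v_m-v|$, or equivalently applying Lemma \ref{rev fatou} to $(v_n-v)^+\leq C$, we obtain $\int (v_n-v)^+\to 0$. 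Together with $\int (v_n-v)\to 0$ this yields $v_n\to v$ in $L^1(\mathbb{S}^{N-1})$. Hence $u_n\to u$ in measure on the sphere, and along a subsequence almost everywhere.

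To get every direction and the full sequence, we use equicontinuity in $\xi$. In the integer case this is exactly \eqref{Lip}. In the fractional case we would like an analogous modulus of continuity of $\xi\mapsto F_{n,\xi}$ in $L^p$, uniform in $n$. Rather than proving such a bound, we plan to argue only along a.e. $\xi$, which is enough for the next step.

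\textbf{Step 2 (strong convergence from norm convergence).} For a.e. $\xi$, Lemma \ref{dir weak conv} gives $F_{n,\xi}\rightharpoonup F_\xi$ weakly in $L^p$, and Step 1 gives $\vertii{F_{n,\xi}}_{L^p}\to\vertii{F_\xi}_{L^p}$. Uniform convexity of $L^p$ for $1<p<\infty$ (Radon--Riesz) then gives $F_{n,\xi}\to F_\xi$ strongly, that is,
\bes
w_n(\xi)\coloneq\int_0^\infty t^{-sp-1}\vertii{\Delta^{\lfloor s\rfloor+1}_{t\xi}(f_n-f)}_{L^p}^p\,dt\to 0 \quad \text{for a.e. } \xi .
\ees
Since $w_n\leq C$ uniformly by Lemma \ref{dir domination}, dominated convergence on the sphere yields $\int_{\mathbb{S}^{N-1}} w_n\,d\mathscr{H}^{N-1}\to 0$.

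\textbf{Step 3 (back to the full seminorm).} Passing to polar coordinates, $\int_{\mathbb{S}^{N-1}}w_n$ is exactly $\verti{f_n-f}_{W^{s,p}}^p$. In the integer case, $\int_{\mathbb{S}^{N-1}}\vertii{\partial^s_\xi g}^p_{L^p}$ is equivalent to $\verti{g}^p_{W^{s,p}}$, because a symmetric $s$-linear form is controlled by its values on the diagonal (polarization). Hence $f_n\to f$ in $\dot{W}^{s,p}$. Every subsequence has a further subsequence with this property, so the whole sequence converges.

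The main obstacle is Step 1: turning convergence of the integrated negative powers into pointwise convergence of the directional energies. The key points are the uniform lower bound from Lemma \ref{far from 0}, which makes the $v_n$ bounded, and the one-sided inequality \eqref{2 lsc}, which converts integral convergence into $L^1$ convergence.
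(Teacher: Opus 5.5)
Your proof is correct, but Step 1 takes a different route from the paper's.

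The paper argues by contradiction at a fixed direction. If $\vertii{\partial^s_\xi f_n}_{L^p}\ge\vertii{\partial^s_\xi f}_{L^p}+\varepsilon$ along a subsequence, the uniform Lipschitz bound \eqref{Lip} spreads this gap to a set $A\subset\mathbb{S}^{N-1}$ of positive measure. Lemma \ref{rev fatou}, with the lower bound of Lemma \ref{far from 0}, then forces $\limsup_n\mathscr{E}_{s,p}(f_n)^{-N/s}<\mathscr{E}_{s,p}(f)^{-N/s}$, contradicting the hypothesis. This gives convergence of the directional energies for \emph{every} $\xi$ along the full sequence. Uniform convexity and polarization then finish (b), and (a) is only declared ``similar''.

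You instead combine the one-sided bound $\limsup_n v_n\le v$ with $\int v_n\to\int v$ to get $v_n\to v$ in $L^1(\mathbb{S}^{N-1})$, hence a.e.\ convergence along subsequences. You then close with dominated convergence on the sphere: polar coordinates in the fractional case, and the norm equivalence $\int_{\mathbb{S}^{N-1}}\verti{\eta(\xi,\dots,\xi)}^p\,d\mathscr{H}^{N-1}(\xi)\simeq\vertii{\eta}^p$ on symmetric $s$-linear forms in the integer case. The subsequence principle recovers the whole sequence.

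The paper's route gives pointwise information in every direction. Yours needs no modulus of continuity in $\xi$ that is uniform in $n$, and that matters for (a). The paper never proves a fractional analogue of \eqref{Lip}: Lemma \ref{c0} concerns a single $f$, and Lemma \ref{equi} only handles truncated integrals over $t>\delta$ and a compact $K$. For $0<s<1$ such a bound is easy. Indeed $\vertii{\Delta_{t\xi}g-\Delta_{t\omega}g}_{L^p}=\vertii{\Delta_{t(\xi-\omega)}g}_{L^p}$, and Lemma \ref{dir domination} then gives a modulus $C\verti{\xi-\omega}^s\verti{g}_{W^{s,p}}$ for the $p$-th roots of the directional energies. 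For higher-order differences this is not immediate. So your argument makes ``(a) is similar'' rigorous, with a single proof covering both cases.
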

\begin{proof} We only prove (b) ((a) is similar). We first show that $\vertii{\partial^{s}_{\xi} f_n}_{L^p} \to \vertii{\partial_{\xi}^{s} f}_{L^p}$, as $n\to \infty$, for each $\xi \in \mathbb{S}^{N-1}$. 
    
    By contradiction, assume that there exists $\xi \in \mathbb{S}^{N-1}$ and $\varepsilon>0$ such that $\vertii{\partial_{\xi}^{s} f_n}_{L^p} \geq \vertii{\partial_{\xi}^{s} f}_{L^p} +\varepsilon $, for each sufficiently large $n$. By \eqref{Lip}, this implies that there exists a set $A \subset \mathbb{S}^{N-1}$ such that $\mathscr{H}^{N-1}(A) >0$, and $\vertii{\partial^{s}_{\omega}f_n}_{L^p} \geq \vertii{\partial^{s}_{\omega}f}_{L^p} +\varepsilon/2$, for each $\omega \in A$, for each $n$ sufficiently large.  We then have
    \be\label{a}
    \ba
     \int_{\mathbb{S}^{N-1}} \vertii{\partial^s_{\xi} f_n}_{L^p}^{-N/s} \ d\mathscr{H}^{N-1}(\xi) \leq& \int_{A} \left(\vertii{\partial_ {\xi}^sf}_{L^p}+ \varepsilon/2\right)^{-N/s} \ d\mathscr{H}^{N-1}(\xi) 
     \\ & +   \int_{\mathbb{S}^{N-1}\setminus A} \vertii{\partial_ {\xi}^s f_n}_{L^p}^{-N/s} \ d\mathscr{H}^{N-1}(\xi),
    \ea
    \ee
    for each sufficiently large $n$.
    By Lemma \ref{far from 0}, we have
     \bes \label{lower}
     \vertii{\partial_{\xi}^s f_n}_{L^p}^{-N/s} \leq C <\infty, \ \fo \xi \in \mathbb{S}^{N-1},
    \ees
    for each sufficiently large $n \in \N$. 
    Therefore, Lemma \ref{rev fatou} yields
    \bes
     \limsup_n \int_{\mathbb{S}^{N-1}\setminus A} \vertii{\partial_ {\xi}^s f_n}_{L^p}^{-N/s} \ d\mathscr{H}^{N-1}(\xi) \leq \int_{\mathbb{S}^{N-1}\setminus A} \limsup_n \vertii{\partial_ {\xi}^s f_n}_{L^p}^{-N/s} \ d\mathscr{H}^{N-1}(\xi).
    \ees
    Passing to the $\limsup$ in \eqref{a}, we find
    \bes
    \ba
     &\mathscr{E}_{s,p}(f)^{-N/s} \\ &\leq \int_{A} \left(\vertii{\partial_ {\xi}^sf}_{L^p}+ \varepsilon/2\right)^{-N/s} \ d\mathscr{H}^{N-1}(\xi)
     + \limsup_n \int_{\mathbb{S}^{N-1}\setminus A} \vertii{\partial_ {\xi}^s f_n}_{L^p}^{-N/s} \ d\mathscr{H}^{N-1}(\xi)
     \\ &\leq  \int_{A} \left(\vertii{\partial_ {\xi}^sf}_{L^p}+ \varepsilon/2\right)^{-N/s} \ d\mathscr{H}^{N-1}(\xi)
      +\int_{\mathbb{S}^{N-1}\setminus A} \limsup_n \vertii{\partial_ {\xi}^s f_n}_{L^p}^{-N/s} \ d\mathscr{H}^{N-1}(\xi)
      \\ & \leq  \int_{A} \left(\vertii{\partial_ {\xi}^sf}_{L^p}+ \varepsilon/2\right)^{-N/s} \ d\mathscr{H}^{N-1}(\xi)
      +\int_{\mathbb{S}^{N-1}\setminus A}  \vertii{\partial_ {\xi}^s f}_{L^p}^{-N/s} \ d\mathscr{H}^{N-1}(\xi)
      \\ & < \mathscr{E}_{s,p}(f)^{-N/s}
    \ea
    \ees
    and a contradiction. 
    
    Hence, for each $\xi \in \mathbb{S}^{N-1}$, we have $\partial^s_{\xi} f_n \rightharpoonup \partial^s_{\xi} f$ in $L^p$ and $\vertii{\partial^s_{\xi} f_n }_{L^p} \to \vertii{\partial^s_{\xi}f}_{L^p}$. Since $L^p$ with $1<p<\infty$ is uniformly convex, we find that $\partial^s_{\xi} f_n \to \partial^s_{\xi} f$ in $L^p$ , as $n \to \infty$. This clearly implies that $\verti{f_n - f}_{W^{s,p}} \to 0$, as $n \to \infty$.
\end{proof}
\begin{rema}
    Lemma \ref{not in 0} is in contrast with the fact that there exist sequences $(f_n) \subset \dot{W}^{1,p}$ such that $\vertii{\nabla f_n}_{L^p}=1$, for each $n \in \N$, and $\mathscr{E}_{1,p}(f_n) \to 0$, $f_n \rightharpoonup 0$ in $\dot{W}^{1,p}$, see \cite[Theorem 2]{haddad2021affine}. 
\end{rema}

\section{Proof of Theorem \ref{exist sob} in the fractional case}\label{frac}
We start this section with a key auxiliary result.
\begin{lemma}\label{uniform BL}
    Let $s$ be non-integer and $1 \leq p < \infty$ be such that $sp<N$. If $f_n \rightharpoonup f$ in $\mathring{W}^{s,p}$, then 
    \bes
    \ba
    \sup_{\xi \in \mathbb{S}^{N-1}} \int_{0}^{\infty}\int_{\R^N} t^{-sp-1}\bigg|\verti{\Delta_{t\xi}^{\lfloor s \rfloor +1} f_n(x)}^p - &\verti{\Delta_{t\xi}^{\lfloor s \rfloor +1}(f_n(x)-f(x))}^p- \verti{\Delta_{t\xi}^{\lfloor s \rfloor +1} f(x)}^p  \bigg| \ dx dt \\ & \longrightarrow 0, \ \text{as} \ n \to \infty.
    \ea
    \ees
\end{lemma}
In the proof of Lemma \ref{uniform BL}, we will rely on the following results. 
\begin{lemma}\label{equi}
Let $s$ be non-integer and $1 \leq  p< \infty$ be such that $sp<N$. Let $M>0$, $\delta>0$, and $K \subset \R^N$ be a compact set.
We have
\bes
\lim_{\varepsilon \to 0} \underset{\substack{f \in \mathring{W}_M^{s,p} \\  \omega, \ \xi \in \  \mathbb{S}^{N-1}, \    |\omega-\xi| \leq \varepsilon}}{\sup}\ \verti{\int_{\delta}^{\infty} t^{-sp-1}\vertii{\Delta_{t\omega}^{\lfloor s \rfloor +1} f}_{L^p(K)}^p \ dt -  \int_{\delta}^{\infty}t^{-sp-1} \vertii{\Delta_{t\xi}^{\lfloor s \rfloor +1} f}_{L^p(K)}^p \ dt}  = 0, 
\ees
where $\mathring{W}_M^{s,p} \coloneq \{ f \in \mathring{W}^{s,p}; \, \verti{f}_{W^{s,p}} \leq M \}$.
\end{lemma}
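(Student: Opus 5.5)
The plan is to split the $t$-integral into a tail $t\geq T$, which is small uniformly in $f$ and the directions, and a compact range $t\in[\delta,T]$. On the compact range, the difference between the directions $\omega$ and $\xi$ becomes a translation by a vector of length at most $(\lfloor s\rfloor+1)T\varepsilon$. That is then controlled by a modulus of continuity of translations in $L^p_{loc}$, uniform over $\mathring{W}^{s,p}_M$. Throughout, set $m\coloneq\lfloor s\rfloor+1$, and note that by the Sobolev embedding \eqref{sob} every $f\in\mathring{W}^{s,p}_M$ satisfies $\vertii{f}_{L^q}\leq CM$.

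\emph{Step 1 (tail).} Fix a ball $B$ of radius $r$. For every $y\in\R^N$, H\"older's inequality gives $\vertii{f}_{L^p(B+y)}\leq |B|^{1/p-1/q}\vertii{f}_{L^q}\leq C_B M$. Expanding $\Delta^m_{t\xi}f(x)=\sum_{j=0}^m\binom{m}{j}(-1)^{m-j}f(x+jt\xi)$, we get $\vertii{\Delta^m_{t\xi}f}_{L^p(K)}\leq C_K M$ for all $t$ and $\xi$. Hence
\bes
\int_T^\infty t^{-sp-1}\vertii{\Delta^m_{t\xi}f}_{L^p(K)}^p\,dt\leq C_K M^p T^{-sp}/(sp),
\ees
uniformly in $f\in\mathring{W}^{s,p}_M$ and $\xi\in\mathbb{S}^{N-1}$. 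Given $\eta>0$, we choose $T$ so that this tail is at most $\eta$, for both $\omega$ and $\xi$.

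\emph{Step 2 (reduction to translations).} For $t\in[\delta,T]$, we write
\bes
\Delta^m_{t\omega}f(x)-\Delta^m_{t\xi}f(x)=\sum_{j=1}^m\binom{m}{j}(-1)^{m-j}\big(f(x+jt\omega)-f(x+jt\xi)\big).
\ees
With $y=x+jt\xi$ and $h=jt(\omega-\xi)$, each term is $f(y+h)-f(y)$, where $y$ ranges over $K+jt\xi\subset B(0,R)$ for some $R=R(K,T,m)$ and $|h|\leq mT\varepsilon$. Therefore
\bes
\vertii{\Delta^m_{t\omega}f-\Delta^m_{t\xi}f}_{L^p(K)}\leq 2^m\sup_{|h|\leq mT\varepsilon}\vertii{f(\cdot+h)-f}_{L^p(B(0,R))}\eqqcolon 2^m\theta_R(mT\varepsilon).
\ees
We combine this with $\big|\,\vertii{u}^p-\vertii{v}^p\big|\leq p\vertii{u-v}\max(\vertii{u},\vertii{v})^{p-1}$ and the bound $C_KM$ from Step 1. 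This shows that the difference of the integrals over $[\delta,T]$ is at most $C\,\delta^{-sp}M^{p-1}\sup_{f\in\mathring{W}^{s,p}_M}\theta_R(mT\varepsilon)$.

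\emph{Step 3 (uniform modulus; the main point).} It remains to show that $\sup_{f\in\mathring{W}^{s,p}_M}\vertii{f(\cdot+h)-f}_{L^p(B(0,R))}\to0$ as $h\to0$. I would obtain this by compactness. As in the proof of Lemma \ref{weak comp}, the set $\{f|_{B(0,R+1)}\,;\,f\in\mathring{W}^{s,p}_M\}$ is bounded in $W^{s,p}(B(0,R+1))$, so Rellich--Kondrachov makes it relatively compact in $L^p(B(0,R+1))$. A relatively compact family in $L^p$ has translations that are equicontinuous, by Kolmogorov--Riesz, or directly via a finite $\eta$-net combined with the continuity of translations for each net element. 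Applying this with $|h|\leq1$ gives the claim.

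As an alternative, for $0<s<1$ one has the direct estimate $\vertii{f(\cdot+h)-f}_{L^p}\lesssim|h|^s\verti{f}_{W^{s,p}}$. For general $s$ I would prefer the compactness route, because $f\notin L^p$ globally.

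Conclusion: first choose $T$ for a given $\eta$, and then $\varepsilon$ small so that the bound from Steps 2--3 is at most $\eta$; this yields the limit.
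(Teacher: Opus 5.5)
Your proof is correct, but it follows a different route from the paper's.

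\textbf{The paper's route.} The paper argues by contradiction. It takes $(f_n)\subset\mathring{W}^{s,p}_M$ and directions $\omega_n,\xi_n$ with $|\omega_n-\xi_n|\le 1/n$. It extracts $f_{n_k}\rightharpoonup f$ in $\mathring{W}^{s,p}$ (Lemma~\ref{weak comp}) together with a common limit direction $\xi$. It then invokes Lemma~\ref{norm conv cpt}: along such sequences, $\int_\delta^\infty t^{-sp-1}\|\Delta^{\lfloor s\rfloor+1}_{t\xi_n}f_n\|^p_{L^p(K)}\,dt$ converges to the same quantity for $(f,\xi)$. That lemma is proved by two ingredients:
\begin{itemize}
\item pointwise convergence in $t$, coming from $L^p_{\mathrm{loc}}$ convergence of $f_n$ and continuity of translations of the single limit $f$;
\item dominated convergence with the bound $Ct^{-sp-1}$ on $(\delta,\infty)$, which is exactly your Step~1 estimate.
\end{itemize}
No cut at a large $T$ is needed there.

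\textbf{Your route.} You argue directly. You remove the tail $t\ge T$ with the same H\"older--Sobolev bound. On $[\delta,T]$ you turn the change of direction into translations of size at most $(\lfloor s\rfloor+1)T\varepsilon$. You then use a modulus of continuity of translations in $L^p(B(0,R))$ that is uniform over $\mathring{W}^{s,p}_M$, obtained from Rellich--Kondrachov compactness and a finite-net argument. That step is sound, and you could simply cite Lemma~\ref{weak comp} for the relative compactness of the restrictions to $B(0,R+1)$.

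\textbf{Comparison.} Both proofs rest on the same compactness input. The paper packages it sequentially through a reusable continuity lemma, and only ever needs continuity of translations for one fixed function. Yours makes the uniformity explicit and shows how the bound depends on $\delta$ (through $\delta^{-sp}$) and on $T$ (through $T^{-sp}$). It also becomes fully quantitative when $0<s<1$, via $\|f(\cdot+h)-f\|_{L^p}\lesssim |h|^s|f|_{W^{s,p}}$.
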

\begin{lemma}\label{c0}
    Let $s$ be non-integer and $1\leq p<\infty$ be such that $sp<N$. For each $f \in \mathring{W}^{s,p}$ and each measurable set $B \subset (0,\infty) \times \R^N$,
    \bes
     \mathbb{S}^{N-1} \ni \xi \to \int_{B} t^{-sp-1} \verti{\Delta^{\lfloor s \rfloor +1}_{t\xi} f(x)}^p \ dx dt
    \ees
    is continuous.
\end{lemma}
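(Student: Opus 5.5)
We have to prove Lemma \ref{c0}: for fixed $f\in\mathring{W}^{s,p}$ and measurable $B$, the map $\xi\mapsto \int_B t^{-sp-1}|\Delta^{\lfloor s\rfloor+1}_{t\xi}f(x)|^p\,dxdt$ is continuous on $\mathbb{S}^{N-1}$. Write $m=\lfloor s\rfloor+1$ and $F_\xi(t,x)=t^{-s-1/p}\Delta^m_{t\xi}f(x)$, as in Lemma \ref{dir weak conv}. The plan is to show that $\xi\mapsto F_\xi$ is continuous from $\mathbb{S}^{N-1}$ into $L^p((0,\infty)\times\R^N)$ with the strong topology. Granting this, $\xi\mapsto \mathds{1}_B F_\xi$ is also continuous, and so is $\xi\mapsto\|\mathds{1}_BF_\xi\|_{L^p}^p$, which is exactly the map in the statement.

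\textbf{Step 1: an a priori bound.} For $g\in\mathring{W}^{s,p}$, Lemma \ref{dir domination} gives
\bes
\|t^{-s-1/p}\Delta^m_{t\omega}g\|_{L^p(\R^+\times\R^N)}\le C|g|_{W^{s,p}}
\ees
uniformly in $\omega\in\mathbb{S}^{N-1}$. Note that $\Delta^m_{t\omega}$ is linear in $g$. By Lemma \ref{density}, choose $g\in\mathscr{S}$ with $|f-g|_{W^{s,p}}<\varepsilon$. Then $\|F_\xi-G_\xi\|_{L^p}\le C\varepsilon$ for every $\xi$, where $G_\xi$ is built from $g$ in the same way. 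It therefore suffices to prove strong continuity of $\xi\mapsto G_\xi$ for Schwartz $g$, by a standard $3\varepsilon$ argument.

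\textbf{Step 2: Schwartz functions.} Fix $g\in\mathscr{S}$ and let $\xi_n\to\xi$. Pointwise, $G_{\xi_n}(t,x)\to G_\xi(t,x)$ by continuity of $g$. To get convergence in $L^p$, split the integral in $t$.

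For $t\le 1$, use the Taylor/mean value bound
\bes
|\Delta^m_{t\omega}g(x)|\le t^m\sup_{|y|\le mt}|D^mg(x+y)|\le C t^m(1+|x|)^{-N-1},
\ees
uniformly in $\omega$. This gives the dominating function $t^{(m-s)p-1}(1+|x|)^{-(N+1)p}$, which is integrable on $(0,1)\times\R^N$ because $m>s$.

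For $t\ge1$, avoid a pointwise dominating function in $x$. Instead, use $\|\Delta^m_{t\omega}g\|_{L^p}\le 2^m\|g\|_{L^p}$. Then the contribution of $(T,\infty)$ is at most $C\|g\|_{L^p}^pT^{-sp}$, uniformly in $\omega$, and this can be made small. On $[1,T]\times\R^N$, use continuity of translations in $L^p$: the map $(t,\omega)\mapsto\Delta^m_{t\omega}g\in L^p$ is continuous. Hence $\sup_{t\in[1,T]}\|\Delta^m_{t\xi_n}g-\Delta^m_{t\xi}g\|_{L^p}\to0$ by uniform continuity on the compact set $[1,T]\times\mathbb{S}^{N-1}$.

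Combining the two ranges yields $\|G_{\xi_n}-G_\xi\|_{L^p}\to0$.

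\textbf{Main obstacle.} The delicate point is the small-$t$ region. There the weight $t^{-sp-1}$ is singular, and integrability comes only from the gain $t^m$ with $m>s$. Smoothness of $g$ is what makes a uniform-in-$\omega$ domination possible. The density reduction of Step 1 is what allows a general $f$, which has no such pointwise control. That reduction rests on the uniform directional bound of Lemma \ref{dir domination}, and this uniformity in $\omega$ is what makes the approximation valid for all directions simultaneously.
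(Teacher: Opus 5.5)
Your proof is correct. It has the same two-stage skeleton as the paper: the Schwartz case first, then density via Lemma \ref{density} combined with the direction-uniform bound of Lemma \ref{dir domination}.

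The difference is that you organize everything around a stronger statement, namely that $\xi\mapsto F_\xi$ is norm-continuous from $\mathbb{S}^{N-1}$ into $L^p((0,\infty)\times\R^N)$. The paper instead works throughout with the scalar quantities $\int_B t^{-sp-1}\verti{\Delta^{\lfloor s\rfloor+1}_{t\xi}f}^p\,dx\,dt$.

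This choice pays off in the density step. By linearity you get $\sup_\xi\vertii{F_\xi-G_\xi}_{L^p}\le C\verti{f-g}_{W^{s,p}}$, so a triangle inequality suffices. The paper has to use the mean value inequality for $t\mapsto t^p$ and H\"older's inequality to get uniform-in-$\xi$ convergence of the functionals. It then invokes the fact that a uniform limit of continuous functions is continuous.

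The Schwartz step is also split differently. The paper discards both tails $(0,a)$ and $(b,\infty)$ at once, using the integrability of $t^{-sp-1}\sup_{\verti{h}\le t}\vertii{\Delta^{\lfloor s\rfloor+1}_h f}_{L^p}^p$. It then applies dominated convergence on the middle range, with a pointwise bound coming from the decay of $D^{\lfloor s\rfloor+1}f$. You use:
\begin{itemize}
\item pointwise Taylor domination only on $(0,1)$, where the singular weight makes it necessary;
\item continuity of translations in $L^p$ on $[1,T]$, uniform on the compact set $[1,T]\times\mathbb{S}^{N-1}$;
\item the crude bound $2^{\lfloor s\rfloor+1}\vertii{g}_{L^p}$ for $t>T$.
\end{itemize}

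What your route buys: a modulus of continuity independent of $B$ at no extra cost, and a reusable intermediate fact (strong continuity of $\xi\mapsto F_\xi$). The paper's argument stays closer to the scalar form in which the lemma is later applied. Neither approach loses anything essential.
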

Granted Lemmas \ref{equi} and \ref{c0}, we turn to the

\begin{proof}[Proof of Lemma \ref{uniform BL}]
It suffices to show that, for each sequence $(\xi_n) \subset \mathbb{S}^{N-1}$, we have
 \bes
\ba
\int_{0}^{\infty}\int_{\R^N} t^{-sp-1}&\verti{\verti{\Delta_{t\xi_n}^{\lfloor s \rfloor +1} f_{n}(x)}^p - \verti{\Delta_{t\xi_n}^{\lfloor s \rfloor +1}(f_{n}(x)-f(x))}^p-\verti{\Delta_{t\xi_n}^{\lfloor s \rfloor +1} f(x)}^p } \ dx dt \\ &\longrightarrow 0, \ \text{as} \ n \to \infty.
    \ea
    \ees
    Without loss of generality, we may assume that $\xi_n \to \xi \in \mathbb{S}^{N-1}$ and that $f_n \to f$ a.e.

We first show that, for each $R, \delta>0$,
\be\label{1st goal}
\int_{\delta}^{\infty}\int_{B(0,R)} t^{-sp-1}G_n(\xi_n,t,x)\ dx dt \to 0, \ \text{as} \ n \to \infty,
\ee
where
\bes
G_n(\omega,t,x)\coloneq \verti{\verti{\Delta_{t\omega}^{\lfloor s \rfloor +1} f_{n}(x)}^p - \verti{\Delta_{t\omega}^{\lfloor s \rfloor +1}(f_{n}(x)-f(x))}^p-\verti{\Delta_{t\omega}^{\lfloor s \rfloor +1} f(x)}^p }, 
\ees
for each $\omega \in \mathbb{S}^{N-1},t>0, x \in \R^N$. 

On the one hand, we have
 \be
    \ba
    \label{in xi}
    \int_{\delta}^{\infty}\int_{B(0,R)} t^{-sp-1}G_n(\xi,t,x) \ dx dt  \longrightarrow 0, \ \text{as} \ n \to \infty,
    \ea
    \ee
by an application of the Brezis-Lieb lemma to the sequence
\bes
(\delta,\infty) \times B(0,R) \ni (t,x) \mapsto t^{-s-1/p}\Delta_{t\xi}^{\lfloor s \rfloor +1} f_n(x),
\ees
which is bounded in $L^p((0,\infty)\times \R^N)$ (by Lemma \ref{dir domination}) and converges almost everywhere to the function~ $(t,x) \mapsto  t^{-sp-1}\Delta_{t\xi}^{\lfloor s \rfloor +1} f(x)$.

On the other hand, by Lemma \ref{equi} and the triangular inequality, we have
\be \label{uniform l}
\ba
 \underset{\substack{n \in \N \\ \omega \in \mathbb{S}^{N-1}, \verti{\omega- \xi} \leq \varepsilon}}{\sup}  \biggl| \int_{\delta}^{\infty}\int_{B(0,R)}t^{-sp-1}G_n(w,t,x)  \ dx dt &- \int_{\delta}^{\infty}\int_{B(0,R)}t^{-sp-1}G_n(\xi,t,x)  \ dx dt\biggl|
 \\ &  \longrightarrow 0, \ \text{as} \ \varepsilon \to 0.
 \ea
 \ee 
 Since $\xi_n \to \xi$, combining \eqref{in xi} and \eqref{uniform l}, we obtain \eqref{1st goal}.

 We now complete the proof by showing that, for each $\varepsilon>0$, there exist $R,\delta>0$, such that 
 \bes
 \ba
 &\int_{\left((\delta, \infty)\times B(0,R) \right)^{C}} t^{-sp-1}G_n(\xi_n,t,x)\ dx dt <\varepsilon,
 \ea
 \ees
 for each sufficiently large $n$.
 
 By Lemma \ref{dir domination}, we have
\bes
 \alpha \coloneq \sup_n \int_{0}^{\infty}\int_{\R^N} t^{-sp-1} \verti{\Delta^{\lfloor s \rfloor +1}_{t\xi_n} (f_ {n}(x)-f(x))}^p \ dx dt <\infty.
\ees
We may therefore fix 
\be \label{eta ch}
0<\eta  < \frac{\varepsilon}{3 \alpha}.  
\ee
    
There exists $C(\eta)<\infty$ such that
\be\label{mink eps}
\verti{\verti{a+b}^p - \verti{b}^p} \leq \eta \verti{b}^p +C(\eta) \verti{a}^p, \ \fo a,b \in \R 
\ee
and this yields
\be \label{eta est}
\ba
&\int_{B} t^{-sp-1} G_n(\xi_n,t,x) \ dx dt \\ &=\int_{B} t^{-sp-1}\verti{\verti{\Delta_{t\xi_n}^{\lfloor s \rfloor +1} f_{n}(x)}^p - \verti{\Delta_{t\xi_n}^{\lfloor s \rfloor +1}(f_{n}(x)-f(x))}^p-\verti{\Delta_{t\xi_n}^{\lfloor s \rfloor +1} f(x)}^p } \ dx dt \\ & \leq \eta \int_{B} t^{-sp-1} \verti{\Delta^{\lfloor s \rfloor +1}_{t\xi_n} (f_{n}(x)-f(x))}^p \ dx dt   \\ &+ (C(\eta)+1) \int_{B} t^{-sp-1} \verti{\Delta^{\lfloor s \rfloor +1}_{t\xi_n} f(x)}^p \ dx dt
\\ & \leq \varepsilon/3 + (C(\eta)+1)\int_{B} t^{-sp-1} \verti{\Delta^{\lfloor s \rfloor +1}_{t\xi_n} f(x)}^p \ dx dt, 
\ea
\ee
for each measurable set $B \subset (0,\infty) \times \R^N$, using \eqref{mink eps} with
\bes
a=\Delta^{\lfloor s \rfloor +1}_{t\xi_n}f(x), \ b= \Delta^{\lfloor s \rfloor +1}_{t\xi_n}(f_n(x)-f(x)).
\ees
 Let $\delta>0$ be sufficiently small, $R$ be sufficiently large, such that
\be\label{LEBE LEMMA}
\ba
&(C(\eta)+1) \int_{0}^{\delta} \int_{\R^N} t^{-sp-1} \verti{\Delta^{\lfloor s \rfloor +1}_{t\xi} f(x)}^p \ dx dt  < \varepsilon/3,
\\ & (C(\eta)+1)\int_{0}^{\infty} \int_{\R^N\setminus B(0,R)} t^{-sp-1} \verti{\Delta^{\lfloor s \rfloor +1}_{t\xi} f(x)}^p \ dx dt <\varepsilon/3.
\ea
\ee
(The existence of such $\delta, R>0$ follows from dominated convergence).
Since $\xi_n \to \xi$, combining \eqref{LEBE LEMMA} and Lemma \ref{c0}, we obtain
\bes
\ba
&(C(\eta)+1) \int_{0}^{\delta} \int_{\R^N} t^{-sp-1} \verti{\Delta^{\lfloor s \rfloor +1}_{t\xi_n} f(x)}^p \ dx dt<\varepsilon/3 \\
&(C(\eta)+1) \int_{0}^{\infty} \int_{\R^N \setminus B(0,R)} t^{-sp-1} \verti{\Delta^{\lfloor s \rfloor +1}_{t\xi_n} f(x)}^p \ dx dt <\varepsilon/3,
\ea
\ees
i.e. 
\be \label{by c0}
(C(\eta)+1) \int_{\left((\delta,\infty) \times B(0,R) \right)^C} t^{-sp-1} \verti{\Delta^{\lfloor s \rfloor +1}_{t\xi_n} f(x)}^p \ dx dt<2\varepsilon/3, 
\ee
for each sufficiently large $n$.

In view of \eqref{eta ch}, \eqref{eta est}, and \eqref{by c0}, we find that, for this choice of $\delta,R>0$,
\bes
\ba
&\int_{\left((\delta, \infty) \times B(0,R) \right)^{C}} t^{-sp-1}G_n(\xi_n,t,x)\ dx dt 
\\ & \leq \varepsilon/3 + (C(\eta)+1) \int_{\left((\delta,\infty) \times B(0,R) \right)^C} t^{-sp-1} \verti{\Delta^{\lfloor s \rfloor +1}_{t\xi_n} f(x)}^p \ dx dt  < \varepsilon,
\ea
\ees
for each sufficiently large $n$.
\end{proof}

Lemma \ref{equi} is a consequence of the following
\begin{lemma}\label{norm conv cpt}
    Let $s$ be non-integer and $1 \leq p<\infty$ be such that $sp<N$. If $(f_n) \subset \mathring{W}^{s,p}$ weakly converges to $f$ in $\mathring{W}^{s,p}$, and $(\xi_n) \subset \mathbb{S}^{N-1}$ converges to $\xi$, then
    \be \label{norm limit}
    \lim_n \int_{\delta}^{\infty}t^{-sp-1}\vertii{\Delta^{\lfloor s \rfloor +1}_{t\xi_n}f_n}_{L^p(K)}^p \ dt = \int_{\delta}^{\infty}t^{-sp-1}\vertii{\Delta^{\lfloor s \rfloor +1}_{t\xi}f }_{L^p(K)}^p \ dt, 
    \ee
    for each $\delta>0$ and each compact set $K \subset \R^N$.
\end{lemma}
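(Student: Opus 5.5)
The idea is to show that, on $(\delta,\infty)\times K$, the functions $F_{n,\xi_n}(t,x)=t^{-s-1/p}\Delta^{\lfloor s \rfloor +1}_{t\xi_n}f_n(x)$ converge \emph{strongly} in $L^p$ to $F_\xi$. The key point is that the cut-off $t>\delta$ removes the small-scale part, where weak convergence could lose mass. Both $x$ and $x+jt\xi_n$ then live in a bounded region whenever $t$ is bounded, and $f_n\to f$ in $L^p_{\text{loc}}$ there. The large-$t$ part is handled by a uniform tail estimate coming from the Sobolev embedding \eqref{sob}.

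First, the large $t$. Expanding $\Delta^{\lfloor s \rfloor +1}_{t\omega}g(x)=\sum_{j=0}^{m}(-1)^{m-j}\binom{m}{j}g(x+jt\omega)$ with $m=\lfloor s\rfloor+1$, I would bound, for any $g\in\mathring{W}^{s,p}$, the quantity $\|\Delta^{m}_{t\omega}g\|_{L^p(K)}$ by $C\sum_j\|g\|_{L^p(K+jt\omega)}\le C|K|^{1/p-1/q}\|g\|_{L^q}$ using H\"older's inequality. Then
\[
\int_T^\infty t^{-sp-1}\|\Delta^{m}_{t\omega}g\|^p_{L^p(K)}\,dt\le C_K T^{-sp}\|g\|^p_{L^q}\le C_K T^{-sp}|g|^p_{W^{s,p}},
\]
uniformly in $\omega$ and in $g$ bounded in $\dot W^{s,p}$. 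Since $\sup_n|f_n|_{W^{s,p}}<\infty$ by definition of weak convergence in $\mathring{W}^{s,p}$ (and $f\in\mathring{W}^{s,p}$), the tail is uniformly small for both $f_n$ and $f$. It therefore suffices to prove \eqref{norm limit} with $\int_\delta^\infty$ replaced by $\int_\delta^T$.

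Second, on $[\delta,T]$ the weight $t^{-sp-1}$ is bounded, so I aim to show $\Delta^m_{t\xi_n}f_n\to\Delta^m_{t\xi}f$ in $L^p([\delta,T]\times K)$. Fix $R$ with $K+B(0,mT)\subset B(0,R)$. For each $j$, write
\[
f_n(x+jt\xi_n)-f(x+jt\xi)=\big(f_n-f\big)(x+jt\xi_n)+\big(f(x+jt\xi_n)-f(x+jt\xi)\big).
\]
The first term has $L^p([\delta,T]\times K)$ norm at most $T^{1/p}\|f_n-f\|_{L^p(B(0,R))}\to0$, by the $L^p_{\text{loc}}$ convergence. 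The second is bounded by $T^{1/p}\sup_{|h|\le jT|\xi_n-\xi|}\|f(\cdot+h)-f\|_{L^p(B(0,R))}$, which tends to $0$ by continuity of translations in $L^p$ applied to $f\mathds{1}_{B(0,R+1)}\in L^p$. This gives strong convergence in $L^p$, hence convergence of the $L^p$ norms on $[\delta,T]\times K$. Combined with the uniform tail bound, letting $n\to\infty$ and then $T\to\infty$ yields \eqref{norm limit}.

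I do not expect a serious obstacle. The only delicate point is the uniform control of the $t\to\infty$ tail, and the $L^q$ bound via \eqref{sob} handles it, since $K$ is compact and $sp>0$. This is precisely where the restriction to a compact $K$ and to $t\ge\delta$ matters: without either one, only the weak convergence of Lemma \ref{dir weak conv} would be available.
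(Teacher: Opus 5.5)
Your proof is correct and uses the same ingredients as the paper. You split $f_n(\cdot+jt\xi_n)-f(\cdot+jt\xi)$ into a term handled by $L^p_{\text{loc}}$ convergence and a term handled by continuity of translations, and you bound $\|\Delta^{\lfloor s\rfloor+1}_{t\omega}g\|_{L^p(K)}\le C|K|^{1/p-1/q}|g|_{W^{s,p}}$ via H\"older and the Sobolev embedding. The only cosmetic difference is that the paper proves convergence for each fixed $t$ and concludes by dominated convergence with dominating function $Ct^{-sp-1}\in L^1(\delta,\infty)$, whereas you cut off at $T$ by hand and prove convergence in $L^p([\delta,T]\times K)$.
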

\begin{proof}
    Let $\delta >0$ and consider a compact set $K \subset \R^N$.  

      On the one hand, we have $\lim_n \vertii{\Delta^{\lfloor s \rfloor +1}_{t\xi_n}f_n}_{L^p(K)} = \vertii{\Delta^{\lfloor s \rfloor +1}_{t\xi}f}_{L^p(K)}$, for each $t>0$. Indeed, we have
      \bes
      \ba
      \biggl|& \vertii{\Delta^{\lfloor s \rfloor +1}_{t\xi_n}f_n}_{L^p(K)} - \vertii{\Delta^{\lfloor s \rfloor +1}_{t\xi}f}_{L^p(K)}\biggl| \\ &\leq C \sum_{k=0}^{\lfloor s \rfloor +1} \vertii{f_n(\cdot+kt\xi_n)-f(\cdot +kt\xi)}_{L^p(K)} 
      \\ & \leq C\sum_{k=0}^{\lfloor s \rfloor +1} \left(\vertii{f_n(\cdot + kt\xi_n)-f(\cdot+kt\xi_n)}_{L^p(K)} + \vertii{f(\cdot + kt\xi_n)-f(\cdot+kt\xi)}_{L^p(K)} \right).
      \ea
      \ees
      For each $0 \leq k\leq \lfloor s \rfloor +1$, $\lim_n \vertii{f_n(\cdot+kt\xi_n)-f(\cdot +kt\xi_n)}_{L^p(K)} = 0$, since $f_n \to f$ in $L^p_{\text{loc}}$, and $ \lim_n \vertii{f(\cdot + kt\xi_n)-f(\cdot+kt\xi)}_{L^p(K)}=0$, by continuity of translations in $L^p$ and since $\xi_n \to \xi $. 

On the other hand, for each $n \in \N$ and $t>0$, we have
\bes
\ba
\vertii{\Delta^{\lfloor s \rfloor +1}_{t\xi_n}f_n }_{L^p(K)} &\leq C \sum_{k=0}^{\lfloor s \rfloor +1} \vertii{f_n(\cdot+kt\xi_n)}_{L^p(K)} 
\\ &\leq  C \sum_{k=0}^{\lfloor s \rfloor +1} \verti{K}^{1/p-1/q} \vertii{f_n}_{L^q} \leq  C\verti{K}^{1/p-1/q} \verti{f_n}_{W^{s,p}}.
\ea
\ees
Here, we rely on H\"older's inequality to obtain the second inequality, and on the Sobolev embedding to obtain the last one. Since $(\verti{f_n}_{W^{s,p}})$ is bounded, the last estimate yields
\bes
t^{-sp-1} \vertii{\Delta^{\lfloor s \rfloor +1}_{t\xi_n}f_n }_{L^p(K)} \leq Ct^{-sp-1},
\ees
for some $C$ which is independent of $t$ and $n$. Since $t \mapsto t^{-sp-1}$ is in $L^1(\delta,\infty)$, \eqref{norm limit} follows by dominated convergence.
\end{proof}
\begin{proof}[Proof of Lemma \ref{equi}]
By contradiction, assume that there exist $(\omega_n), (\xi_n) \subset \mathbb{S}^{N-1}$ with $\verti{\xi_n -\omega_n} \leq 1/n$, and $(f_n) \subset \mathring{W}_M^{s,p}$, such that 
\bes
\liminf_n \verti{\int_{\delta}^{\infty} t^{-sp-1}\vertii{\Delta_{t\omega_n}^{\lfloor s \rfloor +1} f_n}_{L^p(K)}^p \ dt -  \int_{\delta}^{\infty} t^{-sp-1}\vertii{\Delta_{t\xi_n}^{\lfloor s \rfloor +1} f_n}_{L^p(K)}^p \ dt}>0.
\ees
We may extract subsequences $(\xi_{n_k})$, $(\omega_{n_k})$, and $(f_{n_k})$ such that $\omega_{n_k} \to \xi$, $\xi_{n_k}\to \xi$ (since $\verti{\omega_n - \xi_n} \leq 1/n$) , and $f_{n_k} \rightharpoonup f$ in $\mathring{W}^{s,p}$ (by Lemma \ref{weak comp}). By Lemma \ref{norm conv cpt}, we obtain 
and a contradiction.
\end{proof}

\begin{proof}[Proof of Lemma \ref{c0}]
We first consider the case where $f \in \mathscr{S}$. 

Let $(\xi_n) \subset \mathbb{S}^{N-1}$ converge to $\xi$ and let $\varepsilon>0$. 
Since $f\in \mathscr{S}$, we have
\bes
\int_{0}^{\infty}t^{-sp-1} \sup_{\verti{h}\leq t} \vertii{\Delta^{\lfloor s \rfloor +1}_{h} f}^p_{L^p} \ dt < \infty.
\ees
Thus, by dominated convergence, we may find $0<a<b$ such that 
\bes
\int_{0}^{a}t^{-sp-1} \vertii{\Delta^{\lfloor s \rfloor +1}_{t\omega} f}^p_{L^p} \ dt <\varepsilon, \int_{b}^{\infty}t^{-sp-1} \vertii{\Delta^{\lfloor s \rfloor +1}_{t\omega} f}^p_{L^p} \ dt <\varepsilon, \ \fo \omega \in \mathbb{S}^{N-1}.
\ees
Therefore, we have
\bes
\ba
&\verti{\int_{B}t^{-sp-1}\verti{\Delta^{\lfloor s \rfloor +1}_{t\xi_n} f(x)}^p\ dx dt - \int_{B}t^{-sp-1}\verti{\Delta^{\lfloor s \rfloor +1}_{t\xi} f(x)}^p \ dx dt} \\  &\leq 4 \varepsilon +    \verti{\int_{B\cap \left((a,b)\times \R^N\right)}t^{-sp-1}\verti{\Delta^{\lfloor s \rfloor +1}_{t\xi_n} f(x)}^p \ dx dt - \int_{B\cap \left((a,b)\times \R^N\right)} t^{-sp-1}\verti{\Delta^{\lfloor s \rfloor +1}_{t\xi} f(x)}^p \ dxdt}
\ea
\ees
and it suffices to show that 
\be \label{ab}
\ba
\int_{B\cap \left((a,b)\times \R^N\right)}t^{-sp-1}&\verti{\Delta^{\lfloor s \rfloor +1}_{t\xi_n} f(x)}^p \ dx dt \to \int_{B\cap \left((a,b)\times \R^N\right)}t^{-sp-1}\verti{\Delta^{\lfloor s \rfloor +1}_{t\xi} f(x)}^p \ dx dt,
\ea
\ee
as $n \to \infty$,
to complete the proof of Lemma \ref{c0} in the case where $f \in \mathscr{S}$.

We have 
\bes
t^{-sp-1} \Delta_{t\xi_n }^{\lfloor s \rfloor+1} f(x) \to t^{-sp-1} \Delta_{t\xi}^{\lfloor s \rfloor+1} f(x), \ \fo (t,x) \in (0,\infty) \times \R^N.
\ees
We also have 
\be \label{S domination}
\ba
 t^{-sp-1} \verti{\Delta_{t\xi_n }^{\lfloor s \rfloor+1} f(x)}^p &\leq t^{\lfloor s \rfloor p+p-sp-1} \left(\int_{0}^{\lfloor s \rfloor +1} \vertii{D^{\lfloor s \rfloor+1} f(x+u\xi_n)}  \ du \right)^{p}
 \\ & \leq Ct^{\lfloor s \rfloor p+p-sp-1}\sup_{\verti{y-x}\leq \lfloor s \rfloor +1} \vertii{D^{\lfloor s \rfloor +1}f(y)}^p, 
\ea 
\ee
for each $(t,x) \in (0,\infty) \times\R^N$. 
Since $f \in \mathscr{S}$, we have
\be \label{x est}
\ba
\sup_{\verti{y-x}\leq \lfloor s \rfloor +1} \vertii{D^{\lfloor s \rfloor +1}f(y)} &\leq C \sup_{\verti{y-x}\leq \lfloor s \rfloor +1} \left(\lfloor s \rfloor +2+\verti{y}\right)^{-N-1} \\ & \leq C  \left(1 + \verti{x} \right)^{-N-1}, \ \fo x \in \R^N.
\ea
\ee
\eqref{x est} shows that the right-hand side in \eqref{S domination} is $L^1((a,b) \times \R^N)$. 
We may therefore apply dominated convergence to obtain \eqref{ab}.

If $f \in \mathring{W}^{s,p}$, we find $(f_n) \subset \mathscr{S}$ such that $\verti{f_n-f}_{W^{s,p}} \to 0$ (by Lemma \ref{density}). For each $n \in \N$, $\xi \in \mathbb{S}^{N-1}$, we have
\bes
\ba
&\verti{\int_{B} t^{-sp-1}\verti{\Delta^{\lfloor s \rfloor +1}_{t\xi} f_n(x)}^p \ dx dt -  \int_{B} t^{-sp-1}\verti{\Delta^{\lfloor s \rfloor +1}_{t\xi} f(x)}^p\ dx dt}
\\ & \leq C \int_{B} t^{-sp-1} \verti{\Delta^{\lfloor s \rfloor +1}_{t\xi} (f_n-f)(x)} \left( \verti{\Delta^{\lfloor s \rfloor +1}_{t\xi} f_n(x)}^{p-1} +   \verti{\Delta^{\lfloor s \rfloor +1}_{t\xi} f(x)}^{p-1} \right)\ dx dt  
\\ & \leq C \int_{0}^{\infty} t^{-sp-1} \int_{\R^N} \verti{\Delta^{\lfloor s \rfloor +1}_{t\xi} (f_n-f)(x)} \left( \verti{\Delta^{\lfloor s \rfloor +1}_{t\xi} f_n(x)}^{p-1} +   \verti{\Delta^{\lfloor s \rfloor +1}_{t\xi} f(x)}^{p-1} \right)\ dx dt 
\\ & \leq C \left( \int_{0}^{\infty} t^{-sp-1}  \vertii{\Delta^{\lfloor s \rfloor +1}_{t\xi} (f_n-f)}_{L^p}^p \ dt \right)^{1/p} \\
& \left(\int_{0}^{\infty} t^{-sp-1}  \left(\vertii{\Delta^{\lfloor s \rfloor +1}_{t\xi}f}^p_{L^p} + \vertii{\Delta^{\lfloor s \rfloor +1}_{t\xi}f_n}^p_{L^p} \right)\ dt \right)^{1-1/p}
\\ &\leq C \verti{f_n-f}_{W^{s,p}}, 
\ea
\ees
where we used the mean value inequality to obtain the first inequality, H\"older's inequality to obtain the third one, and Lemma \ref{dir domination} and the boundedness of $(f_n)$ in $\dot{W}^{s,p}$ for the last one. Therefore, we have

\be \label{unif conv}
\ba
\sup_{\xi \in \mathbb{S}^{N-1}}&\verti{\int_{B}t^{-sp-1}\verti{\Delta^{\lfloor s \rfloor +1}_{t\xi} f_n(x)}^p \ dt -  \int_{B}t^{-sp-1}\verti{\Delta^{\lfloor s \rfloor +1}_{t\xi} f(x)}^p\  dxdt }
\\ & \leq C \verti{f_n-f}_{W^{s,p}} \to 0, \ \, \text{as} \ \, n \to \infty.
\ea
\ee
Since the maps $ \displaystyle \mathbb{S}^{N-1} \ni \xi \mapsto \int_{B} t^{-sp-1}\verti{\Delta^{\lfloor s \rfloor +1}_{t\xi} f_n(x)}^p\ \ dx \ dt $ are continuous by our first step, \eqref{unif conv} yields the desired conclusion.
\end{proof}

Thanks to Lemma \ref{uniform BL}, we are in position to adapt the approach of Zhang \cite{zhang2021optimizers} to the existence of extremal functions in the fractional Sobolev inequality, based on the following result.
\begin{lemma}\label{non zero ex}
Let $s$ be a non-integer and $1\leq p<\infty$ be such that $sp<N$.  
    If $(f_n) $ is a bounded sequence in $\mathring{W}^{s,p}$  such that $\inf_n \vertii{f_n}_{L^q} >0$, then there exist sequences $(R_n) \subset (0,\infty)$ and $(x_n) \subset \R^N$ such that, up to a subsequence, $\frac{1}{R_n^{N/q}}f_n((\cdot-x_n)/R_n) \rightharpoonup f \neq 0$ in $\mathring{W}^{s,p}$.
\end{lemma}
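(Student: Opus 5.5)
We follow the strategy of Bellazzini--Frank--Visciglia and Zhang, and rely on a \emph{refined Sobolev inequality} in Besov form. Fix $\theta\in(0,1)$ with $\theta > p/q$. We use the estimate
\bes
\vertii{g}_{L^q} \leq C \verti{g}_{W^{s,p}}^{\theta} \, \verti{g}_{B^{s-N/p}_{\infty,\infty}}^{1-\theta}, \ \fo g \in \mathring{W}^{s,p},
\ees
where $\verti{g}_{B^{s-N/p}_{\infty,\infty}} = \sup_{j\in\Z} 2^{-(s-N/p)j}\vertii{\mathscr{F}^{-1}(\varphi_j\mathscr{F}g)}_{L^\infty}$. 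Here the sign convention follows the paper's definition of $\dot{B}^s_{p,q}$, in which $2^{-sj}$ appears. This is the inequality of Gérard and of Bellazzini--Frank--Visciglia. We would cite it, or prove it by the standard Chemin--Xu/Ledoux argument: split $g$ into low and high frequencies at a level depending on $\verti{g(x)}$, bound the level sets, and use Lemma \ref{paley} to identify $\verti{g}_{W^{s,p}}$ with the $\dot{B}^s_{p,p}$ norm, which embeds into $\dot B^{s}_{p,\infty}$.

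Applied to $f_n$, the hypotheses $\inf_n\vertii{f_n}_{L^q}>0$ and $\sup_n\verti{f_n}_{W^{s,p}}<\infty$ give $\inf_n \verti{f_n}_{B^{s-N/p}_{\infty,\infty}}\geq c>0$. Hence for each $n$ there are $j_n\in\Z$ and $x_n\in\R^N$ with
\bes
2^{-(s-N/p)j_n}\verti{\mathscr{F}^{-1}(\varphi_{j_n}\mathscr{F}f_n)(-x_n')}\geq c/2 .
\ees
We then set $R_n := 2^{-j_n}$, up to the sign and normalization matching the statement, and choose $x_n$ accordingly. Define $g_n := R_n^{-N/q} f_n((\cdot - x_n)/R_n)$.

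The scaling $R^{-N/q}f(\cdot/R)$ preserves both $\vertii{\cdot}_{L^q}$ and $\verti{\cdot}_{W^{s,p}}$, since $s-N/p=-N/q$. It also preserves the Besov quantity, up to replacing $\varphi_j$ by the rescaled partition, which is harmless because the constants $C_\alpha$ are uniform. After rescaling, the localized piece $\mathscr{F}^{-1}(\psi\mathscr{F}g_n)(0)$ has modulus at least $c/2$, where $\psi$ is a fixed annular cutoff of the form $\varphi_0$ or a finite sum of neighbours to absorb the $j$-dependence of $\varphi_j$. Equivalently, $\verti{\int g_n \check{\psi}}\geq c/2$ for a fixed Schwartz function $\check\psi=\mathscr{F}^{-1}\psi$.

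Next, $(g_n)$ is bounded in $\mathring{W}^{s,p}$, so by Lemma \ref{weak comp} a subsequence satisfies $g_n\rightharpoonup g$ in $\mathring{W}^{s,p}$, and $g_n\rightharpoonup g$ in $L^q$ by the subsequent lemma. Since $\check\psi\in\mathscr{S}\subset L^{q'}$, we get $\int g\check\psi = \lim \int g_n\check\psi$, which has modulus at least $c/2$. Hence $g\neq 0$.

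The main obstacle is the refined Sobolev inequality itself. This is especially so for $p=1$ and for $s>1$, where Lemma \ref{paley} must supply the Besov characterization. Care is also needed that the distributional identity $\mathscr{F}^{-1}(\varphi_j\mathscr{F}g)=g*\check\varphi_j$ is valid for $g\in L^q$, which modulo polynomials is fine because $\varphi_j$ vanishes near $0$. If a direct citation is unavailable, we would prove it first in the form $\vertii{g}_{L^q}^q\lesssim \verti{g}_{B^s_{p,p}}^p \verti{g}_{B^{-N/q}_{\infty,\infty}}^{q-p}$, via the layer-cake splitting. This form gives $\theta=p/q$, and strictly less than $1$ is all that is needed.
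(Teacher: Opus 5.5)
Your proposal is correct and follows essentially the paper's route: the refined Sobolev inequality $\|f\|_{L^q}\lesssim |f|_{W^{s,p}}^{p/q}\,|f|_{B^{-N/q}_{\infty,\infty}}^{1-p/q}$ (which the paper cites from Bahouri--Chemin--Danchin), then a choice of scale and point where the negative Besov norm is nearly attained, rescaling, and testing the weak $L^q$ limit against a fixed Schwartz function. The differences are cosmetic, with one correction needed: the paper uses the low-pass form $\sup_{A>0}A^{-N/q}\|\mathscr{F}^{-1}(\Psi(A^{-1}\cdot)\mathscr{F}f)\|_{L^\infty}$ with a single dilated $\Psi$, which sidesteps your $j$-dependence issue for $\varphi_j$ (you could equally take $\varphi_j=\varphi_0(2^{-j}\cdot)$), and your weight must be $2^{(s-N/p)j}=2^{-Nj/q}$, because the $2^{-sj}$ in the paper's definition of $\dot{B}^s_{p,q}$ is a typo for $2^{sj}$, and with your literal sign both the refined inequality and the scaling invariance you invoke fail.
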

Lemma \ref{non zero ex} is a consequence of the improved Sobolev inequality 
\be\label{improved}
C \vertii{f}_{L^q} \leq  \verti{f}_{B^{-N/q}_{\infty,\infty}}^{sp/N} \verti{f}_{W^{s,p}}^{1-sp/N}, \ \fo f \in \mathring{W}^{s,p},
\ee
where $\verti{f}_{B^{-N/q}_{\infty,\infty}} \coloneq \sup_{A>0}A^{-N/q} \vertii{\mathscr{F}^{-1}\left(\Psi(A^{-1} \cdot) \mathscr{F}f\right)}_{L^{\infty}}$, with $0 \leq \Psi \leq 1$ compactly supported, and with value $1$ near $0$. A proof of \eqref{improved} may be found in \cite[Theorem 2.42]{bahouri2011fourier}, as well as the definition of the Besov space $B_{\infty,\infty}^{-N/sp}$ \cite[Definition 1.40]{bahouri2011fourier}. A proof of Lemma \ref{non zero ex}, when $0<s<1$ and $1<p<\infty$, may be found in \cite[Lemma 3.2]{zhang2021optimizers}. The same proof yields the general case, see Appendix \ref{A}.

We now turn to the 

\begin{proof}[Proof of Theorem \ref{exist sob} in the case where $s$ is non-integer]
Consider a sequence $(f_n) \subset \mathring{W}^{s,p}$ such that $\vertii{f_n}_{L^q}=1$ and $\mathscr{E}_{s,p}(f_n) \to S$.

 By Lemma \ref{equili}, replacing $f_n$ with $f_n\circ T_{f_n}$, we may assume that 
\be  \label{replace}
\verti{f_n}_{W^{s,p}}^p\leq C \int_{0}^{\infty}t^{-sp-1}\vertii{\Delta^{\lfloor s \rfloor +1}_{t\xi}f_n}_{L^p}^p\ dt, \ \fo \xi \in \mathbb{S}^{N-1}.
\ee
In particular, we may assume that $(f_n)$ is bounded in $\dot{W}^{s,p}$.
    By Lemma \ref{non zero ex}, there exist $(x_n)\subset \R^N$, $(R_n) \subset (0,\infty)$, and $f \neq 0$, such that
    \be \label{def til}
    \tilde{f_n}\coloneq \frac{1}{R_n^{N/q}}f_n((\cdot-x_n)/R_n) \rightharpoonup f \ \text{in} \ \mathring{W}^{s,p}
    \ee 
    up to a subsequence. 
    We may replace  $f_n$ by $\tilde{f_n}$ thanks to the identities $\vertii{\tilde{f_n}}_{L^q}= \vertii{f_n}_{L^q}$, 
\bes
\int_{0}^{\infty}t^{-sp-1}\vertii{\Delta^{\lfloor s \rfloor +1}_{t\xi}\tilde{f_n}}_{L^p}^p\ dt = \int_{0}^{\infty}t^{-sp-1}\vertii{\Delta^{\lfloor s \rfloor +1}_{t\xi}f_n}_{L^p}^p\ dt, \ \fo \xi \in \mathbb{S}^{N-1},  
\ees
and $\displaystyle| \tilde{f_n}|_{W^{s,p}}=\verti{f_n}_{W^{s,p}}$ for each $n \in \N$. In particular, $\tilde{f_n}$ satisfies \eqref{replace}. Hence, we may assume that $f_n \rightharpoonup f \neq 0$ in $\mathring{W}^{s,p}$.

    
    By Lemma \ref{uniform BL}, we have, for each $\xi \in \mathbb{S}^{N-1}$,
    \bes
    \ba
    \int_{0}^{\infty}t^{-sp-1} &\vertii{\Delta^{\lfloor s \rfloor +1}_{t\xi} f_n }_{L^p}^p \ dt \\& = \int_{0}^{\infty}t^{-sp-1} \vertii{\Delta^{\lfloor s \rfloor +1}_{t\xi} (f_n-f) }_{L^p}^p \ dt +  \int_{0}^{\infty}t^{-sp-1} \vertii{\Delta^{\lfloor s \rfloor +1}_{t\xi} f }_{L^p}^p \ dt + R_n(\xi)
    \\& = Q_n(\xi)+ R_n(\xi), 
    \ea
    \ees
    with $\sup_{\xi \in \mathbb{S}^{N-1}}\verti{R_n(\xi)} \to 0$, as $n \to \infty$. 
    Note that
     \be \label{for acc}
     \inf_{\xi  \in \mathbb{S}^{N-1}} Q_n(\xi) \geq \inf_{\xi \in \mathbb{S}^{N-1}} \int_{0}^{\infty}t^{-sp-1} \vertii{\Delta^{\lfloor s \rfloor +1}_{t\xi} f }_{L^p}^p \ dt >0.
    \ee
    The last inequality in \eqref{for acc} is a consequence of Lemma \ref{no constant direction}, since $\verti{f}_{W^{s,p}} \geq C\vertii{f}_{L^q}>0$, by the Sobolev inequality \eqref{sob}, and since $f\neq 0$.
    
    Therefore, we find that
     \be \label{affine BL}
    \ba
    &\mathscr{E}_{s,p}(f_n)^p \geq \bigg(\int_{\mathbb{S}^{N-1}}\left(Q_n(\xi)^{-N/sp} +  \varepsilon_n \right) \  d \mathscr{H}^{N-1}(\xi)\bigg)^{-sp/N} \\ 
    &\geq  \left(\int_{\mathbb{S}^{N-1}}Q_n(\xi)^{-N/sp} \ d \mathscr{H}^{N-1}(\xi) \right)^{-sp/N}  + o_{n\to \infty}(1)
    \\ & \geq \mathscr{E}_{s,p}(f_n-f)^p + \mathscr{E}_{s,p}(f)^p + o_{n\to \infty}(1),
    \ea
    \ee
    where $\varepsilon_n$ is independent of $\xi \in \mathbb{S}^{N-1}$ and converges to $0$. 
    Here, thanks to \eqref{for acc}, we apply the mean value inequality to the map $t \mapsto t^{-N/sp}$ to obtain the first inequality, and to $t \mapsto t^{-sp/N}$ to obtain the second one. We then use the reverse Minkowski inequality \eqref{reverse minko} to obtain the last estimate.

Therefore, combining \eqref{affine BL} and the affine Sobolev inequality, we find
\bes 
\ba
\mathscr{E}_{s,p}(f_n)^p \geq S^p\left(\vertii{f_n-f}_{L^q}^p + \vertii{f}_{L^q}^p\right) + o(1),
\ea
\ees
and thus
\bes
\ba
S^p=\lim_{n} \mathscr{E}_{s,p}(f_n)^p &\geq S^p  \left(\limsup_n \vertii{f_n -f}_{L^q}^p + \vertii{f}_{L^q}^p \right).
\ea
\ees
If $\limsup_n \vertii{f_n-f}_{L^q} >0$, we rely on the inequality
\be \label{n fact}
(a+b)^{p/q} < a^{p/q}+b^{p/q}, \ \fo a,b > 0,  \ \text{(since} \ p/q<1 )
\ee
and on the fact that $\vertii{f}_{L^q}>0$, to obtain 
\be \label{contra ineq}
\ba
S^p=\lim_{n} \mathscr{E}_{s,p}(f_n)^p &\geq S^p  \left(\limsup_n \vertii{f_n -f}_{L^q} ^p + \vertii{f}_{L^q}^p \right) \\ &>S^p \left(\limsup_n \vertii{f_n -f}_{L^q}^q + \vertii{f}_{L^q}^q \right)^{p/q}. 
\ea
\ee
By the Brezis-Lieb lemma, we have
\bes
\vertii{f_n-f}_{L^q}^q + \vertii{f}_{L^q}^q= \vertii{f_n}_{L^q}^q+o_{n\to \infty}(1)=1 + o(1)
\ees    
and this yields $\limsup_n \vertii{f_n-f}_{L^q}^q + \vertii{f}_{L^q}^q =1$. This fact, combined with \eqref{contra ineq}, yields a contradiction.

 Therefore, we find that $\vertii{f_n-f}_{L^q} \to 0$, and thus obtain that $\vertii{f}_{L^q}=1$. By the affine Sobolev inequality \eqref{aff sob} and Lemma \ref{weak lsc}, we hence have
 \bes
  S \leq \mathscr{E}_{s,p}(f) \leq \liminf_n \mathscr{E}_{s,p}(f_n) = S
 \ees
 i.e. $\mathscr{E}_{s,p}(f)=S$. Therefore, $f$ is an extremal function in the affine fractional Sobolev inequality. 
 
 When $p>1$, since $f_n \rightharpoonup f$ in $\mathring{W}^{s,p}$ and $\mathscr{E}_{s,p}(f_n) \to \mathscr{E}_{s,p}(f)$, we also find that $f_n \to f$ in $\dot{W}^{s,p}$, by Lemma \ref{not in 0}.
\end{proof}

\section{Proof of Theorem \ref{exist sob} in the integer case}\label{int}
Let $s$ be an integer and $1<p<\infty$.  We will rely on the following modification of the second concentration-compactness lemma. 
\begin{lemma}\label{affin CC}
    Let $(f_n) \subset \mathring{W}^{s,p}$ be such that $f_n \rightharpoonup f$ in $\dot{W}^{s,p}$, $\verti{f_n}^q \ dx \overset{*}{\rightharpoonup} \nu$, and
    \be \label{wH dir}
     \verti{\partial^s_{\xi}f_n}^p dx \overset{*}{\rightharpoonup} \verti{\partial^s_{\xi} f }^p dx + \mu_{\xi}, \  \fo \xi \in \mathbb{S}^{N-1}.
    \ee
    Then:
    \begin{enumerate}[(a)]
    \item
     $\displaystyle \nu= \verti{f}^q dx + \sum_{i \in I}\nu_i \delta_{x_i}$, where $I$ is countable, $(x_i)_{i \in I} \subset \R^N$, and $(\nu_i)_{i \in I} \subset (0,\infty) $.
    \item 
    $(\nu_i)_{i \in I}$ and $(\mu_{\xi})_{\xi \in \mathbb{S}^{N-1}}$ satisfy
    \bes
    \sum_{i}S^p(\nu_i)^{p/q} \leq \left(\int_{\mathbb{S}^{N-1}} \left( \sum_{i \in I} \mu_{\xi}(\{x_i\})\right)^{-N/sp} \ d\mathscr{H}^{N-1}(\xi) \right)^{-sp/N}. 
    \ees
    \end{enumerate}
\end{lemma}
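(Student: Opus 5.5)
We follow Lions' proof of the second concentration-compactness lemma. The new ingredient is the anisotropic inequality: for each fixed direction we get a localized reverse inequality, and the affine structure enters through the reverse Minkowski inequality \eqref{reverse minko}.

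\textbf{Reduction to $f=0$.} Set $g_n \coloneq f_n - f$, so $g_n \rightharpoonup 0$ in $\dot{W}^{s,p}$. By Rellich--Kondrachov (lower-order derivatives are controlled via the Sobolev embedding) $g_n \to 0$ in $L^p_{\text{loc}}$, and we may assume $g_n\to 0$ a.e. The Brezis--Lieb lemma in $L^q$ applied against test functions gives $|f_n|^q\,dx - |g_n|^q\,dx \overset{*}{\rightharpoonup} |f|^q\,dx$. Similarly, applying the Brezis--Lieb argument to $\partial^s_\xi f_n = \partial^s_\xi g_n + \partial^s_\xi f$ requires a.e. convergence of $\partial^s_\xi g_n$, which is not available. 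Instead we use the same splitting as in \eqref{mink eps} together with $\partial^s_\xi g_n \rightharpoonup 0$ in $L^p$. Away from the atoms this is routine. At an atom $x_i$ only the mass of $\mu_\xi$ matters, and the weight $\varepsilon$-inequality \eqref{mink eps} shows that $\mu_\xi(\{x_i\})$ equals the atom of the weak-$*$ limit of $|\partial^s_\xi g_n|^p\,dx$, since $|\partial^s_\xi f|^p\,dx$ has no atoms. So it suffices to work with $g_n$, limiting measures $\tilde\nu = \nu - |f|^q\,dx$ and $\tilde\mu_\xi$ whose atoms agree with those of $\mu_\xi$.

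\textbf{Reverse Hölder inequality for the measures.} Fix $\varphi \in C_c^\infty$. Apply the affine Sobolev inequality \eqref{aff sob} to $\varphi g_n$ and expand $\partial^s_\xi(\varphi g_n) = \varphi\,\partial^s_\xi g_n + \sum(\text{derivatives of }\varphi)\,(\text{derivatives of } g_n \text{ of order}<s)$. The lower-order terms tend to $0$ in $L^p$ uniformly in $\xi$ by local compactness. Hence, by Fatou in the spirit of Lemma \ref{rev fatou} (or simply by monotonicity of the outer power),
\[
S\Bigl(\int |\varphi|^q\, d\tilde\nu\Bigr)^{1/q} \le \liminf_n \mathscr{E}_{s,p}(\varphi g_n) \le \Bigl(\int_{\mathbb{S}^{N-1}}\Bigl(\int|\varphi|^p\, d\tilde\mu_\xi\Bigr)^{-N/sp} d\mathscr{H}^{N-1}(\xi)\Bigr)^{-s/N}.
\]
The second inequality needs $\limsup_n \int_{\mathbb{S}^{N-1}} \|\varphi\partial^s_\xi g_n\|_p^{-N/s} \le \int_{\mathbb{S}^{N-1}} (\lim_n\cdots)^{-N/s}$. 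This is actually an inequality in the right direction pointwise in $\xi$, because the inner quantities converge to $\int|\varphi|^p\,d\tilde\mu_\xi$; if that limit is $0$, the right side is $0$ or $\infty$ and the bound is trivial in the needed direction. Uniformity in $\xi$ comes from the Lipschitz bound \eqref{Lip}.

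\textbf{Atomicity, part (a).} Taking a direction-independent bound $\tilde\mu_\xi \le \mu$, where $\mu$ is the weak-$*$ limit of $\|D^s g_n\|^p\,dx$, the previous display yields $S\,\tilde\nu(E)^{1/q} \le C\mu(E)^{1/p}$ for Borel sets $E$. Since $q>p$, Lions' standard argument shows that $\tilde\nu$ is a countable sum of Dirac masses $\nu_i\delta_{x_i}$, which gives (a).

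\textbf{Part (b), the main obstacle.} We must sum over atoms inside the affine functional. Take $\varphi = \varphi_\varepsilon(\cdot - x_i)$ concentrating at $x_i$ and let $\varepsilon \to 0$. Dominated convergence on the sphere applies since $\tilde\mu_\xi(B_\varepsilon(x_i))\downarrow \mu_\xi(\{x_i\})$; the negative power is handled by monotone convergence. This gives
\[
S^p\nu_i^{p/q} \le \Bigl(\int_{\mathbb{S}^{N-1}} \mu_\xi(\{x_i\})^{-N/sp}\, d\mathscr{H}^{N-1}(\xi)\Bigr)^{-sp/N} = \|u_i\|_{L^{-N/sp}},
\]
with $u_i(\xi) \coloneq \mu_\xi(\{x_i\})$. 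Summing over $i$ and applying the reverse Minkowski inequality \eqref{reverse minko} with exponent $\alpha = -N/sp<0$ (extended to countable sums by monotonicity) gives
\[
\sum_i \|u_i\|_{L^{\alpha}} \le \Bigl\|\sum_i u_i\Bigr\|_{L^\alpha},
\]
which is exactly (b).

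The delicate point is measurability and the weak-$*$ identification of $\mu_\xi$ atoms uniformly in $\xi$. This is handled through \eqref{Lip}, which makes $\xi \mapsto \mu_\xi(\{x_i\})^{1/p}$ Lipschitz.
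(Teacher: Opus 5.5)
Your proposal follows the paper's proof: apply \eqref{aff sob} to $\varphi g_n$ with $g_n=f_n-f$, discard the lower-order terms by local compactness, pass to the limit on $\mathbb{S}^{N-1}$, concentrate $\varphi$ at each atom $x_i$, and sum via \eqref{reverse minko}.

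There are two minor deviations. First, the paper takes (a) directly from Lions' lemma. Second, you extract weak-$*$ limits $\tilde\mu_\xi$ of $|\partial^s_\xi g_n|^p\,dx$ (a further extraction as in Lemma \ref{extract}) and match their atoms with those of $\mu_\xi$ via \eqref{mink eps}. That is correct, but the paper avoids it: it bounds $\|\varphi\,\partial^s_\xi g_n\|_{L^p}\le\|\varphi\,\partial^s_\xi f\|_{L^p}+\|\varphi\,\partial^s_\xi f_n\|_{L^p}$ and lets the $f$-term vanish as $\varepsilon\to0$. Your remark that \eqref{Lip} makes $\xi\mapsto\mu_\xi(\{x_i\})^{1/p}$ Lipschitz, hence measurable, settles a point the paper leaves implicit.

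One slip should be corrected. The inequality you say is needed,
\[
\limsup_n\int_{\mathbb{S}^{N-1}}\|\varphi\,\partial^s_\xi g_n\|_{L^p}^{-N/s}\,d\mathscr{H}^{N-1}(\xi)\le\int_{\mathbb{S}^{N-1}}(\lim_n\cdots)^{-N/s}\,d\mathscr{H}^{N-1}(\xi),
\]
goes the wrong way. That is the direction of Lemma \ref{rev fatou}, which requires a uniform lower bound on the inner norms, and localized functions $\varphi g_n$ have no such bound.

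Since $t\mapsto t^{-s/N}$ is decreasing, an upper bound on $\limsup_n\mathscr{E}_{s,p}(\varphi g_n)$ requires
\[
\liminf_n\int_{\mathbb{S}^{N-1}}\|\partial^s_\xi(\varphi g_n)\|_{L^p}^{-N/s}\,d\mathscr{H}^{N-1}(\xi)\ge\int_{\mathbb{S}^{N-1}}\Bigl(\int|\varphi|^p\,d\tilde\mu_\xi\Bigr)^{-N/sp}d\mathscr{H}^{N-1}(\xi).
\]
This is plain Fatou with pointwise convergence in $\xi$; no uniformity in $\xi$ and no domination are needed. With that fix, your argument is complete.
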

Before turning to the proof of Lemma \ref{affin CC}, we first show that, from a bounded sequence in $\dot{W}^{s,p}$, we may always extract a subsequence satisfying the assumptions of Lemma \ref{affin CC}.
\begin{lemma}\label{extract}
    Let $(f_n)$ be  a bounded sequence in $\dot{W}^{s,p}$. Then there exists a subsequence $(f_{n_k})$ and a family of Radon measures $(\lambda_{\xi})_{\xi \in \mathbb{S}^{N-1}}$ such that, for each $\xi \in \mathbb{S}^{N-1}, \verti{\partial_{\xi}^s f_{n_k}}^p \ dx \overset{*}{\rightharpoonup} \lambda_{\xi},$ as $k\to \infty$.
    \end{lemma}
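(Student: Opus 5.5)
We plan to combine a diagonal extraction over a countable dense set of directions with the Lipschitz estimate \eqref{Lip} from the proof of Lemma \ref{weak int conv}, which lets us pass from countably many directions to all of $\mathbb{S}^{N-1}$.

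\emph{Step 1: countably many directions.} Set $M\coloneq\sup_n \verti{f_n}_{W^{s,p}}^p<\infty$ and fix a countable dense set $D=\{\omega_1,\omega_2,\dots\}\subset\mathbb{S}^{N-1}$. For each $\omega\in\mathbb{S}^{N-1}$ we have $\vertii{\partial^s_\omega f_n}_{L^p}^p\le M$, so the measures $\verti{\partial^s_{\omega} f_n}^p\,dx$ are uniformly bounded in total variation. By Banach--Alaoglu (the unit ball of the dual of $C_0(\R^N)$ is weak-$*$ sequentially compact), for $\omega_1$ we extract a weak-$*$ convergent subsequence, from it one for $\omega_2$, and so on. The diagonal subsequence $(f_{n_k})$ then satisfies $\verti{\partial^s_{\omega_j} f_{n_k}}^p\,dx\overset{*}{\rightharpoonup}\lambda_{\omega_j}$ for every $j$.

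\emph{Step 2: all directions.} It suffices to show that, for each $\xi\in\mathbb{S}^{N-1}$ and each $\varphi\in C_c(\R^N)$ (by boundedness of the masses this also gives $C_0$), the sequence $\int\varphi\verti{\partial^s_\xi f_{n_k}}^p\,dx$ is Cauchy. Then $\lambda_\xi(\varphi)\coloneq\lim_k\int\varphi\verti{\partial^s_\xi f_{n_k}}^p\,dx$ defines a positive linear functional of norm at most $M$, hence a Radon measure by Riesz. For $\xi,\omega\in\mathbb{S}^{N-1}$ we apply the mean value inequality $\verti{\verti{a}^p-\verti{b}^p}\le p\verti{a-b}(\verti{a}^{p-1}+\verti{b}^{p-1})$, then H\"older, then \eqref{Lip}:
\bes
\ba
\biggl|\int \varphi\verti{\partial^s_\xi f_n}^p\,dx-\int\varphi\verti{\partial^s_\omega f_n}^p\,dx\biggr|
&\le p\vertii{\varphi}_{L^\infty}\vertii{\partial^s_\xi f_n-\partial^s_\omega f_n}_{L^p}\bigl(\vertii{\partial^s_\xi f_n}_{L^p}^{p-1}+\vertii{\partial^s_\omega f_n}_{L^p}^{p-1}\bigr)\\
&\le C\vertii{\varphi}_{L^\infty}\verti{\xi-\omega},
\ea
\ees
with $C$ depending only on $s$, $p$ and $M$. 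Given $\varepsilon>0$, choose $\omega_j\in D$ with $\verti{\xi-\omega_j}$ small enough that this bound is below $\varepsilon/3$, uniformly in $k$. Step 1 makes $\int\varphi\verti{\partial^s_{\omega_j}f_{n_k}}^p\,dx$ Cauchy, so a $3\varepsilon$ argument shows $\int\varphi\verti{\partial^s_\xi f_{n_k}}^p\,dx$ is Cauchy as well.

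\emph{Main obstacle.} The only non-routine point is that the subsequence must not depend on $\xi$, since the directions are uncountable. The estimate \eqref{Lip}, which is uniform in $n$, is exactly what makes the countable diagonal argument sufficient. When $p=1$ the inequality is simply $\bigl|\verti{a}-\verti{b}\bigr|\le\verti{a-b}$, so the same argument goes through.
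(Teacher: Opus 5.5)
Your proposal is correct and follows essentially the same route as the paper. Both arguments use a diagonal extraction over a countable dense set of directions, a Lipschitz-in-direction estimate that is uniform in $n$, and a $3\varepsilon$ argument followed by Riesz representation. The differences are cosmetic. You combine H\"older's inequality with \eqref{Lip}, whereas the paper bounds the integrand pointwise by $\vertii{D^s_x f_n}^{p-1}\verti{\partial^s_\eta f_n-\partial^s_{\eta'}f_n}$. You also test against $C_c$ first and then pass to $C_0$ via the uniform mass bound.
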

\begin{proof}
    Let $D$ be a dense countable subset of  $\mathbb{S}^{N-1}$. Using the compactness of bounded sequences of Radon measures, and a diagonal extraction procedure,  we may find a subsequence $(f_{n_k})$ and a family of finite measures $(\lambda_{\omega})_{\omega \in D}$ such that $\verti{\partial^s_{\omega}f_{n_k}}^p \ dx \ \overset{*}{\rightharpoonup} \lambda_{\omega}$, for each $\omega \in D$.

    We now show that $\verti{\partial_{\xi}^s f_{n_k}}^p \ dx$ converges in the sense of measures, for each $\xi \in \mathbb{S}^{N-1}$. It suffices to show that, for each $\varphi \in C_0(\R^N)$, the sequence $(\int_{\R^N} \varphi \verti{\partial^s_{\xi}f_{n_k}}^p \ dx)_k$ converges. Indeed, in that case, 
    \bes
    \varphi \mapsto  \lim_k \int_{\R^N} \varphi \verti{\partial^s_{\xi}f_{n_k}}^p \ dx 
    \ees
     is a continuous linear form on $C_0(\R^N)$, associated to a finite Radon measure $\lambda_{\xi}$ (by the Radon-Riesz theorem), and 
     $\verti{\partial^s_{\xi} f_{n_k}}^p \ dx \overset{*}{\rightharpoonup} \lambda_{\xi} $, as $k\to \infty$.

     Let $\varphi \in C_0(\R^N)$. For each $n \in \N$, $\eta, \eta' \in \mathbb{S}^{N-1}$, we note that 
     \be \label{unif}
     \ba
     \bigg|\int_{\R^N} \varphi\verti{\partial^s_{\eta}f_n}^p \ dx &- \int_{\R^N} \varphi\verti{\partial^s_{\eta'}f_n}^p \ dx \bigg| \\ &\leq p \vertii{\varphi}_{L^{\infty}} \int_{\R^N} \vertii{D_x^s f_n}^{p-1} \verti{ \partial^s_{\eta} f_n(x) - \partial^s_{\eta'}f_n(x)} \ dx 
     \\ & \leq pN  \vertii{\varphi}_{L^{\infty}}\verti{\eta - \eta'}\int_{\R^N} \vertii{D_x^s f_n}^{p}  \ dx
     \\ & \leq  C \verti{\eta -\eta'},
     \ea
     \ee
     where $C$ does not depend on $n$, since $(f_n)$ is bounded in $\dot{W}^{s,p}$.
     Here, we apply the mean value inequality to the map $t\mapsto t^p$ to obtain the first inequality. The second inequality is obtained as \eqref{Lip}.

     Now let $\varepsilon>0$. By density of $D$ and \eqref{unif}, we find $\omega \in D$ such that 
     \bes
        \verti{\int_{\R^N} \varphi\verti{\partial^s_{\xi}f_{n_k}}^p \ dx - \int_{\R^N} \varphi\verti{\partial^s_{\omega}f_{n_k}}^p \ dx } \leq \varepsilon, \ \fo k \in \N.
     \ees
     Since $\verti{\partial_{\omega}^s f_{n_k}}^p \ dx \overset{*}{\rightharpoonup} \lambda_{\omega}$,
    \bes
    \verti{\int_{\R^N} \varphi\verti{\partial^s_{\xi}f_{n_l}}^p \ dx - \int_{\R^N} \varphi\verti{\partial^s_{\xi}f_{n_k}}^p \ dx } \leq 3\varepsilon,
    \ees
    for each $k,l$ sufficiently large, by the triangular inequality. This completes the proof of Lemma \ref{extract}.
\end{proof}
We now turn to the proof of Lemma \ref{affin CC}, adapting the approach of Lions to the classical second concentration-compactness lemma (\cite[Lemma 1.1]{lions1985concentration}).
\begin{proof}[Proof of Lemma \ref{affin CC}]
(a) is given by the second concentration-compactness lemma \cite[Lemma 1.1]{lions1985concentration}.

For (b), we argue as follows. Note that $g_n\coloneq f_n-f$ is such that $g_n \rightharpoonup 0 \ \text{in} \ \dot{W}^{s,p}$. $(g_n)$ is bounded in $W^{s,p}(B(0,R))$, for each $R>0$, therefore, we may assume that $g_n \to 0$ in $W^{s-1,p}_{loc}$, by the Rellich-Kondrachov theorem. We may further assume that $g_n \to 0$ a.e. and obtain that 
    \be \label{mes limit}
     \verti{g_n}^q \ dx \overset{*}{\rightharpoonup} \tilde{\nu} \coloneq \sum_{i} \nu_i \delta_{x_i}, \ \text{as} \ n \to \infty,
    \ee 
 by the Brezis-Lieb lemma, and thanks to $(a)$.
 
    Let $\varphi \in C_c^{\infty}$. We have
    \be \label{sb est}
    S^p\left(\int_{\R^N} \verti{g_n\varphi}^q \ dx \right)^{p/q} \leq \mathscr{E}_{s,p}(g_n\varphi)^p, \ \fo n \in \N,
    \ee
    by definition of $S$, see \eqref{aff sob}.
    We also have
    \bes
    \ba
    &\mathscr{E}_{s,p}(g_n\varphi)^p= \left(\int_{\mathbb{S}^{N-1}}\vertii{\sum_{i=0}^s \binom{s}{i}\partial_{\xi}^i g_n \partial_{\xi}^{s-i}\varphi}_{L^p}^{-N/s} \ d\mathscr{H}^{N-1}(\xi) \right)^{-sp/N} 
    \\ & \leq  \left(\int_{\mathbb{S}^{N-1}}\left(\vertii{\partial^s_{\xi}f \varphi}_{L^p}+ \vertii{\partial^s_{\xi}f_n \varphi}_{L^p}+\vertii{\sum_{i=0}^{s-1}\binom{s}{i}\partial_{\xi}^i g_n \partial_{\xi}^{s-i}\varphi}_{L^p} \right)^{-N/s} \ d\mathscr{H}^{N-1}(\xi) \right)^{-sp/N}.
    \ea
    \ees
   Note that $Q_n(\xi) \coloneq \vertii{\sum_{i=0}^{s-1}\binom{s}{i}\partial_{\xi}^i g_n \partial_{\xi}^{s-i}\varphi}_{L^p}$ satisfies $Q_n(\xi) \to 0$, for each $\xi \in \mathbb{S}^{N-1}$, since $g_n \rightharpoonup 0$ in $W^{s-1,p}_{\text{loc}}$ and $\varphi \in C_c^{\infty}$. We also have that 
    \bes
    \vertii{\partial^s_{\xi}f_n \varphi}_{L^p} \to \left(\vertii{\partial^s_{\xi} f \varphi}_{L^p}^p +\int_{\R^N}\verti{\varphi}^p  d \mu_{\xi}   \right)^{1/p} ,
    \ees
    as $n \to \infty$, by \eqref{wH dir}.
    Therefore, we find that
    \begin{flalign}\begin{split} \label{lims}
    &\limsup_n \mathscr{E}_{s,p}(g_n\varphi)^p \\ &\leq \left(\liminf_n \int_{\mathbb{S}^{N-1}}\left(\vertii{\partial^s_{\xi}f \varphi}_{L^p}+ \vertii{\partial^s_{\xi}f_n \varphi}_{L^p}+ Q_n(\xi) \right)^{-N/s} \ d\mathscr{H}^{N-1}(\xi) \right)^{-sp/N}
    \\  &\leq \left( \int_{\mathbb{S}^{N-1}}\liminf _n \left(\vertii{\partial^s_{\xi}f \varphi}_{L^p}+ \vertii{\partial^s_{\xi}f_n \varphi}_{L^p}+ Q_n(\xi) \right)^{-N/s} \ d\mathscr{H}^{N-1}(\xi) \right)^{-sp/N}
    \\ &= \bigg(\int_{\mathbb{S}^{N-1}} \biggl(\vertii{\partial^s_{\xi}f \varphi}_{L^p}  \\ & \hspace{90 pt} + \left(\vertii{\partial^s_{\xi} f \varphi}_{L^p}^p +\int_{\R^N}\verti{\varphi}^p  d \mu_{\xi}   \bigg)^{1/p} \right)^{-N/s} \ d\mathscr{H}^{N-1}(\xi) \bigg)^{-sp/N}.
    \end{split}
    \end{flalign}
    Here, we rely on Fatou's lemma to obtain the second inequality.
    
    Passing to the $\limsup$ in \eqref{sb est} and using \eqref{lims}, we find that
    \begin{flalign} \begin{split}\label{fophi}
    &S^p \left(\int_{\R^N} \verti{\varphi}^q \ d\tilde{\nu} \right)^{p/q} \\ &\leq 
    \biggl(\int_{\mathbb{S}^{N-1}} \bigg(\vertii{\partial^s_{\xi}f \varphi}_{L^p}  \\ & \hspace{90 pt} + \left(\vertii{\partial^s_{\xi} f \varphi}_{L^p}^p +\int_{\R^N}\verti{\varphi}^p  d \mu_{\xi}   \bigg)^{1/p} \right)^{-N/s} \ d\mathscr{H}^{N-1}(\xi) \biggl)^{-sp/N},
    \end{split}
    \end{flalign}
    for each $\varphi \in C_c^{\infty}$ (recall that $\verti{g_n}^q\ dx \overset{*}{\rightharpoonup} \tilde{\nu}$, with $\tilde{\nu}$ defined in \eqref{mes limit}).

    Now consider $\varphi \in C_c^{\infty}(B(0,1))$, such that  $\varphi(0)=1$ and $\vertii{\varphi}_{L^{\infty}}=1$.
    For each $i \in I$, $\varepsilon>0$, we apply \eqref{fophi} to the function $\displaystyle \varphi_{i,\varepsilon}(x)\coloneq \varphi\left(\frac{x-x_i}{\varepsilon}\right)$ and find
    \begin{flalign}\label{epsi esti}
    \begin{split}
     S^p \nu_i^{p/q} &\leq S^p \left(\int_{\R^N} \verti{\varphi_{i,\varepsilon}}^q \ d\tilde{\nu}  \right)^{p/q} 
     \\ & \leq \biggl(\int_{\mathbb{S}^{N-1}} \bigg(\vertii{\partial^s_{\xi}f \varphi_{i,\varepsilon}}_{L^p}  \\ & \hspace{40 pt} + \left(\vertii{\partial^s_{\xi} f \varphi_{i,\varepsilon}}_{L^p}^p +\mu_{\xi}(B(x_i,\varepsilon) )   \bigg)^{1/p} \right)^{-N/s} \ d\mathscr{H}^{N-1}(\xi) \biggl)^{-sp/N}.
    \end{split}
    \end{flalign}
    For each $\xi \in \mathbb{S}^{N-1}$, we have
    \bes
    \mu_{\xi}(B(x_i,\varepsilon)) \to  \mu_{\xi}(\{x_i \}) \ \text{and} \ \vertii{\partial^s_{\xi} f \varphi_{i,\varepsilon}}_{L^p} \to 0, \ \text{as} \ \varepsilon \to 0.
    \ees
    Passing to the $\limsup_{\varepsilon \to 0}$ in \eqref{epsi esti} and using Fatou's lemma, we find
    \begin{flalign}\label{each i}
    \begin{split}
    S^p \nu_{i}^{p/q} &\leq \biggl(\int_{\mathbb{S}^{N-1}} \liminf_{\varepsilon \to 0} \bigg(\vertii{\partial^s_{\xi}f \varphi_{i,\varepsilon}}_{L^p}  \\ & \hspace{60 pt} + \left(\vertii{\partial^s_{\xi} f \varphi_{i,\varepsilon}}_{L^p}^p +\mu_{\xi}(B(x_i,\varepsilon) )   \bigg)^{1/p} \right)^{-N/s} \ d\mathscr{H}^{N-1}(\xi) \biggl)^{-sp/N} 
    \\ & \leq \left(\int_{\mathbb{S}^{N-1}}  \mu_{\xi}(\{x_i\})^{-N/sp} \ d\mathscr{H}^{N-1}(\xi)\right)^{-sp/N},
    \end{split}
    \end{flalign}
    for each $i \in I$.
    Therefore, we find that 
    \bes
    \ba
    \sum_{i \in I} S^{p} \nu_{i}^{p/q} &\leq \sum_{i \in I} \left(\int_{\mathbb{S}^{N-1}}  \mu_{\xi}(\{x_i\})^{-N/sp} \ d\mathscr{H}^{N-1}(\xi)\right)^{-sp/N}
    \\  & \leq \left(\int_{\mathbb{S}^{N-1}}  \left(\sum_{i \in I} \mu_{\xi}(\{x_i\})\right)^{-N/sp} \ d\mathscr{H}^{N-1}(\xi)\right)^{-sp/N},
    \ea
    \ees
    using \eqref{each i} and the reverse Minkowski inequality \eqref{reverse minko}.
\end{proof}
We now recall the first concentration-compactness lemma, due to Lions \cite{lions1984concentration}.
\begin{lemma}\label{1 CC}(\cite[Lemma 1.1]{lions1984concentration}, \cite[4.3] {struwe2000variational})
    Let $(\mu_n)_{n\in \N}$ be a sequence of probability measures. There exists a subsequence $(\mu_{n_k})_{k \in \N}$ such that one of the following holds :
    \begin{enumerate}[(a)]
        \item (Compactness) There exists $(x_k) \subset \R^N$ such that, for each $\varepsilon>0$, there exists $R>0$ satisfying 
        \bes
         \mu_{n_k}(B(x_k,R)) \geq 1-\varepsilon, \ \fo k \in \N.
        \ees
        \item (Vanishing)  For each $R>0$, we have $\sup_{x \in \R^N} \mu_{n_k}(B(x,R)) \to 0$, as $k \to \infty$.
         \item (Dichotomy) There exist $0<\lambda<1$, $(x_k) \subset \R^N$, and $R_k \to \infty$, such that   
        \bes
        \ba
          & \verti{\lambda- \mu_{n_k}(B(x_k,R_k))} + \verti{(1-\lambda)- \mu_{n_k}(\R^N\setminus B(x_k,2R_k))} \to 0, \ \text{as} \ k \to \infty. 
        \ea
        \ees
        \end{enumerate}
\end{lemma}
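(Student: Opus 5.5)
The plan is to follow the classical proof of Lions' first concentration-compactness lemma (as in Struwe), built on the Lévy concentration functions
\[
Q_n(R)\coloneq \sup_{x\in\R^N}\mu_n(B(x,R)),\qquad R>0 .
\]
For each $n$, the map $R\mapsto Q_n(R)$ is nondecreasing with values in $[0,1]$, and $\lim_{R\to\infty}Q_n(R)=1$ because $\mu_n$ is a probability measure. By Helly's selection theorem (a diagonal extraction over rational $R$, followed by monotonicity), I will pass to a subsequence $(\mu_{n_k})$ such that $Q_{n_k}(R)\to Q(R)$ for every $R>0$, where $Q$ is nondecreasing with values in $[0,1]$. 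Set $\lambda\coloneq\lim_{R\to\infty}Q(R)\in[0,1]$. The three alternatives will correspond to $\lambda=0$, $\lambda=1$, and $0<\lambda<1$.

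\textbf{Case $\lambda=0$.} Then $Q\equiv0$, so $\sup_x\mu_{n_k}(B(x,R))\to0$ for every $R$. This is vanishing, alternative (b).

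\textbf{Case $\lambda=1$.} Fix $R_0$ with $Q(R_0)>1/2$, and choose $x_k$ with $\mu_{n_k}(B(x_k,R_0))>1/2$ for all large $k$. Given $\varepsilon\in(0,1/2)$, take $R_\varepsilon$ with $Q(R_\varepsilon)>1-\varepsilon$, and pick $y_k$ with $\mu_{n_k}(B(y_k,R_\varepsilon))>1-\varepsilon$ for large $k$. The two balls $B(x_k,R_0)$ and $B(y_k,R_\varepsilon)$ have total mass exceeding $1$, so they intersect. Hence $B(y_k,R_\varepsilon)\subset B(x_k,R_0+2R_\varepsilon)$, which gives $\mu_{n_k}(B(x_k,R_0+2R_\varepsilon))\ge1-\varepsilon$ for large $k$. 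For the finitely many remaining $k$, I enlarge $R$ using the tightness of each single probability measure. This is compactness, alternative (a), and the sequence $(x_k)$ does not depend on $\varepsilon$.

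\textbf{Case $0<\lambda<1$.} Since $Q(R)\to\lambda$, for each $m$ I choose $r_m\to\infty$ with $Q(r_m)>\lambda-1/m$, and also $Q(r_m)\le Q(4r_m)\le\lambda$. I then select indices $k_m$ increasing such that
\[
|Q_{n_{k_m}}(r_m)-Q(r_m)|<1/m,\qquad |Q_{n_{k_m}}(4r_m)-Q(4r_m)|<1/m .
\]
Next I pick $x_m$ with $\mu_{n_{k_m}}(B(x_m,r_m))>Q_{n_{k_m}}(r_m)-1/m$ and set $R_m\coloneq r_m$. This yields
\[
\lambda-3/m<\mu_{n_{k_m}}(B(x_m,R_m))\le\mu_{n_{k_m}}(B(x_m,2R_m))\le Q_{n_{k_m}}(2R_m)\le\lambda+1/m .
\]
Therefore $\mu_{n_{k_m}}(\R^N\setminus B(x_m,2R_m))\to1-\lambda$ and $\mu_{n_{k_m}}(B(x_m,R_m))\to\lambda$. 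Relabelling the subsequence gives dichotomy, alternative (c).

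The main obstacle is this diagonal choice in the dichotomy case. The radii $R_k\to\infty$ must be coupled with the subsequence so that the mass in the annulus $B(x_k,2R_k)\setminus B(x_k,R_k)$ tends to zero. The key point is to control both $Q_{n_k}(R_k)$ and $Q_{n_k}(2R_k)$ simultaneously near $\lambda$, which the choice of $k_m$ above ensures.
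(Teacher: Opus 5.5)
Your proof is correct and is the standard L\'evy concentration-function argument behind Lions' lemma, which the paper does not reprove but cites (Lions, Struwe 4.3). The only difference is that you build the coupling of $R_k\to\infty$ with the subsequence directly into the concentration-function argument, whereas the paper gets this form of dichotomy from Struwe's $\varepsilon$-dependent version via the diagonal argument on p.~47 of Struwe's book. One small caveat: rational extraction plus monotonicity only gives convergence of $Q_{n_k}$ at continuity points of $Q$ (Helly's theorem needs one more extraction at the countably many jumps), but your argument only uses convergence on a dense set of radii, so nothing is affected.
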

 The dichotomy case in Lemma \ref{1 CC} is slightly different from the one in \cite[4.3]{struwe2000variational}, but it is a straightforward consequence of this last result, as shown in \cite[p. 47]{struwe2000variational}.

We may now adapt the approach of Lions to the existence of extremal functions in the classical Sobolev inequalities, see \cite[Theorem 8]{lions1985concentration}, or the presentation of this approach in \cite[Theorem 4.9]{struwe2000variational}. 
\begin{proof}[Proof of Theorem \ref{exist sob} in the case where $s$ is an integer]
    Let $(f_n)$ be such that $\vertii{f_n}_{L^q}=1$, for each $n \in \N$, and $\mathscr{E}_{s,p}(f_n) \to S$, as $n \to \infty$. As in the proof of Theorem \ref{exist sob} in the fractional case (see Section \ref{frac}), we may assume (using Lemma \ref{equili}) that
    \be\label{lb 1}
    \verti{f_n}_{W^{s,p}} \leq C \vertii{\partial^s_{\xi}f_n}_{L^p}, \ \fo \xi \in \mathbb{S}^{N-1}, \  \fo n \in \N,
    \ee
    and thus that $(f_n)$ is bounded in $\dot{W}^{s,p}$.
     We may choose  $(x_n)\subset \R^N$ and $(R_n) \subset (0,\infty)$ such that  
    \be\label{kill v}
     \int_{B(0,1)} | \tilde{f_n}(y)|^q \ dy = \sup_{x \in \R^N} \int_{B(x,1)} |\tilde{f_n}(y)|^q  \ dy=1/2,
    \ee
    where $\tilde{f_n}$ is defined in \eqref{def til}. We may replace $f_n$ by $\tilde{f_n}$, as it is done in \cite[I.2. Step 1] {lions1985concentration}, and assume that $(f_n)$ satisfies \eqref{kill v}.
    
    Since $(f_n)$ is bounded in $\dot{W}^{s,p}$, by extracting a subsequence of $(f_n)$, we may further assume that $f_n \rightharpoonup f$ in $\dot{W}^{s,p}$ and that $(f_n)$ satisfies the conclusion of Lemmas \ref{extract} and \ref{affin CC} : 
    \be \label{cl CC}
     \verti{\partial^s_{\xi} f_n}^p \ dx \overset{*}{\rightharpoonup} \verti{\partial^s_{\xi} f}^p + \mu_{\xi}, \ \fo \xi \in \mathbb{S}^{N-1}, \verti{f_n}^q \ dx \overset{*}{\rightharpoonup} \nu=\verti{f}^q \ dx + \sum_{i \in I} \nu_i \delta_{x_i},
     \ee
     \be \label{ineq CC}
    \sum_{i}S^p(\nu_i)^{p/q} \leq \left(\int_{\mathbb{S}^{N-1}} \left( \sum_{i \in I} \mu_{\xi}(\{x_i\})\right)^{-N/sp} \ d\mathscr{H}^{N-1}(\xi) \right)^{-sp/N}.
    \ee
    
    By Lemma \ref{1 CC}, and since we have \eqref{kill v}, we may find a subsequence of $(f_{n})_{n\in\N}$ (still denoted $(f_n)$) such that either item (a) (compactness), or item (c) (dichotomy), holds for the sequence of probability measures $(\verti{f_{n}}^q \ dx)_{n\in \N}$.
    
    Assume first that we have dichotomy. There exist $0<\lambda<1$, $(x_n) \subset \R^N $, and $(R_n) \subset (0,\infty)$ such that
    \be \label{split}
     \int_{B(x_n,R_n)} \verti{f_n}^q \ dx \to \lambda \text{,} \ \int_{\R^N \setminus B(x_n,2R_n)} \verti{f_n}^q \ dx \to 1-\lambda, \ \text{as} \ n \to \infty,
    \ee
    and thus
    \be \label{anneau}
     \int_{B(x_n,2R_n)\setminus B(x_n,R_n)} \verti{f_n}^q \ dx \to 0.
    \ee
    
    We set 
    \be \label{def phi}
    \ba
     \varphi_n(x) \coloneq \varphi\left(\frac{x-x_n}{R_n}\right), \ \fo x\in \R^N,
     \ea
     \ee
     with   $\varphi$  \text{satisfying} $0 \leq \varphi \leq 1, \, \varphi=1$  \text{on} $B(0,1)$  \text{and}  $\supp \varphi \subset B(0,2)$.
    By \cite[(4.14)]{struwe2000variational} and the argument that follows, we have, in that case, 
     \be \label{struw}
     \vertii{D^{i} f_n} \vertii{D^{s-i}\varphi_n} \to 0 \ \text{in} \ L^p, \ \text{as} \ n \to \infty,
     \ee
     for each  $0\leq i <s$. Arguing as in \cite{struwe2000variational}, we may show that 
      \be\label{1t}
    \vertii{\partial_{\xi}^s f_n }_{L^p}^p + \varepsilon_n \geq  \vertii{\partial_{\xi}^s (f_n \varphi_n) }_{L^p}^p + \vertii{\partial_{\xi}^s \left(f_n \left(1-\varphi_n\right)\right) }_{L^p}^p\text{,}
    \ee
     where $\varepsilon_n$ is independent of $\xi \in \mathbb{S}^{N-1}$ and converges to $0$ as $n \to \infty$.  For the convenience of the reader, we reproduce this proof in detail.
    
    For each $\eta>0$, $n \in \N$, and $\xi \in \mathbb{S}^{N-1}$, we have that 
    \be \label{appli of mink}
    \ba
     &\vertii{\partial^{s}_{\xi}(f_n \varphi_n) }_{L^p}^p +  \vertii{\partial^{s}_{\xi}(f_n (1-\varphi_n)) }_{L^p}^p \\ &= \vertii{\partial^s_{\xi}f_n \varphi_n + \sum_{i=0}^{s-1} \binom{s}{i}\partial_{\xi}^{i}f_n \partial_{\xi}^{s-i}\varphi_n }_{L^p}^p +  \vertii{\partial^s_{\xi}f_n (1-\varphi_n) - \sum_{i=0}^{s-1} \binom{s}{i}\partial_{\xi}^{i}f_n \partial_{\xi}^{s-i}\varphi_n }_{L^p}^p
     \\ & \leq (1+\eta)\left(\vertii{\partial^{s}_{\xi} f_n \varphi_n}_{L^p}^p + \vertii{\partial^{s}_{\xi} f_n (1-\varphi_n)}_{L^p}^p \right) +C(\eta) \vertii{ \sum_{i=0}^{s-1} \binom{s}{i}\partial_{\xi}^{i}f_n \partial_{\xi}^{s-i}\varphi_n}_{L^p}^p,
    \ea
    \ee
    using \eqref{mink eps}. Since $0 \leq \varphi_n \leq 1$, we also have that 
    \bes
     \vertii{\partial^{s}_{\xi} f_n \varphi_n}_{L^p}^p + \vertii{\partial^{s}_{\xi} f_n (1-\varphi_n)}_{L^p}^p \leq \vertii{\partial^s_{\xi} f_n}_{L^p}^p.
    \ees
    Combining the last inequality with \eqref{appli of mink}, we find that, for each $\eta>0$, $n \in \N$, and $\xi \in \mathbb{S}^{N-1}$,
    \bes
    \ba
    \vertii{\partial^{s}_{\xi}(f_n \varphi_n) }_{L^p}^p &+  \vertii{\partial^{s}_{\xi}(f_n (1-\varphi_n)) }_{L^p}^p \\ &\leq (1+\eta) \vertii{\partial^s_{\xi} f_n}_{L^p}^p +  C(\eta) \vertii{ \sum_{i=0}^{s-1} \binom{s}{i}\partial_{\xi}^{i}f_n \partial_{\xi}^{s-i}\varphi_n}_{L^p}^p 
    \\ & \leq (1+\eta) \vertii{\partial^s_{\xi} f_n}_{L^p}^p+ C(\eta) \sum_{i=0}^{s-1}\vertii{\vertii{D^{i} f_n} \vertii{D^{s-i}\varphi_n}}_{L^p}^p.
    \ea
    \ees
    Thus, for each $\varepsilon>0$, we may fix $\eta>0$ such that $\eta \sup_{n \in \N} \verti{f_n}_{W^{s,p}} < \varepsilon$ (since $(f_n)$ is bounded in $\dot{W}^{s,p}$), and find $N \in \N$ (independent of $\xi \in \mathbb{S}^{N-1}$) such that 
    \bes
     \vertii{\partial^{s}_{\xi}(f_n \varphi_n) }_{L^p}^p +  \vertii{\partial^{s}_{\xi}(f_n (1-\varphi_n))}_{L^p}^p \leq  \vertii{\partial^s_{\xi} f_n}_{L^p}^p +2\varepsilon 
    \ees
    for each $n \geq N$, using \eqref{struw}. This proves \eqref{1t}.

    Since $\vertii{f_n}_{L^q}=1$,  $\vertii{\partial^s_{\xi}f_n}_{L^p}\geq C>0$, for each $n$ and $\xi \in \mathbb{S}^{N-1}$,  by \eqref{lb 1} and the Sobolev inequality.  Thanks to this fact, we may apply the mean value inequality to the functions $t \mapsto x^{-N/sp}$ and $x \mapsto t^{-sp/N}$ successively, and find
    \be\label{2t}
      \left( \int_{\mathbb{S}^{N-1}} \left( \vertii{\partial_{\xi}^s f_n }_{L^p}^p + \varepsilon_n \right)^{-N/sp} \ d\mathscr{H}^{N-1}(\xi)\right)^{-sp/N} \leq \mathscr{E}_{s,p}(f_n)^p + o_{n\to \infty}(1). 
    \ee 
     Using the reverse Minkowski inequality \eqref{reverse minko}, and combining \eqref{1t} and \eqref{2t}, we find
    \be\label{penul}
    \ba
    &\mathscr{E}_{s,p}(f_n\varphi_n)^p + \mathscr{E}_{s,p}(f_n(1-\varphi_n))^p 
    \\ & \leq  \left( \int_{\mathbb{S}^{N-1}} \left(\vertii{\partial_{\xi}^s (f_n \varphi_n) }_{L^p}^p + \vertii{\partial_{\xi}^s \left(f_n \left(1-\varphi_n\right)\right) }_{L^p}^p \right)^{-N/sp} \ d\mathscr{H}^{N-1}(\xi)\right)^{-sp/N}
    \\ & \leq \left( \int_{\mathbb{S}^{N-1}} \left( \vertii{\partial_{\xi}^s f_n }_{L^p}^p + \varepsilon_n \right)^{-N/sp} \ d\mathscr{H}^{N-1}(\xi)\right)^{-sp/N} 
    \\ & \leq \mathscr{E}_{s,p}(f_n)^p + o_{n\to \infty}(1). 
    \ea
    \ee
Using \eqref{penul}, the affine Sobolev inequality \eqref{aff sob}, and the definition of $\varphi_n$ \eqref{def phi}, we thus find
\be \label{last dichotomy}
\ba
&\mathscr{E}_{s,p}(f_n)^p+o_{n\to \infty}(1) \geq S^p \left(\vertii{f_n \varphi_n}_{L^q}^p +   \vertii{f_n (1-\varphi_n)}_{L^q}^p \right)
\\ & \geq S^p\left(\left(\int_{B(x_n,R_n)} \verti{f_n}^q dx\right)^{p/q} +\left(\int_{\R^N \setminus B(x_n,2R_n)} \verti{f_n}^q dx\right)^{p/q} \right).
\ea
\ee
 Passing to the limit in \eqref{last dichotomy}, using \eqref{split}, we find
\bes
S^p \geq S^p \left( \lambda^{p/q}+(1-\lambda)^{p/q}\right)>S^p,
\ees
since $p<q$ and $0<\lambda<1$. This yields a contradiction, and shows that dichotomy cannot occur.

Hence, by Lemma \ref{1 CC}, there is compactness for the sequence of probability measures $\displaystyle (\verti{f_n}^q \ dx)_{n\in \N}$: there exists $(x_n) \subset \R^N$ such that, for each $\varepsilon>0$, there exists $R(\varepsilon)$ satisfying
\be\label{compact}
\int_{B(x_n,R(\varepsilon))} \verti{f_n}^q \ dx \geq 1-\varepsilon, \ \fo n \in \N.
\ee
For each $0<\varepsilon<1/2$, \eqref{kill v}, \eqref{compact}, and the fact that $\vertii{f_n}_{L^q}=1$, imply that $B(0,1)\cap B(x_n, R(\varepsilon))\neq \emptyset$, for each $n \in \N$. Hence, we also have
$B(x_n,R(\varepsilon)) \subset B(0,2R(\varepsilon)+1)$, and
\bes
\int_{B(0,2R(\varepsilon)+1)} \verti{f_n}^q \ dx \geq 1-\varepsilon, \ \fo n \in \N.
\ees
  We obtain that $(\verti{f_n}^q \ dx)$ is a tight sequence of probability measures, converging to a measure $\nu$, and thus 
  \be\label{proba}
  \nu \ \text{is a probability measure}. 
  \ee

By \eqref{lb 1}, we have $\vertii{\partial_{\xi}^s f_n}_{L^p} \geq C>0$, for each $\xi \in \mathbb{S}^{N-1}$, $n \in \N$.  Hence, Lemma \ref{rev fatou} yields
\be \label{limsup}
\ba
\limsup_n \int_{\mathbb{S}^{N-1}}\vertii{\partial_{\xi}^s f_n}_{L^p}^{-N/s} \ d\mathscr{H}^{N-1}(\xi) &\leq \int_{\mathbb{S}^{N-1}} \limsup_n \vertii{\partial_{\xi}^s f_n}_{L^p}^{-N/s} \ d\mathscr{H}^{N-1}(\xi)
\\&=\int_{\mathbb{S}^{N-1}}  \left(\liminf_{n} \vertii{\partial_{\xi}^s f_n}^p_{L^p}\right)^{-N/sp} \ d\mathscr{H}^{N-1}(\xi).
\ea
\ee
On the other hand, for each $\xi \in \mathbb{S}^{N-1}$,
\be \label{fatou weak}
\vertii{\partial^s_{\xi} f}_{L^p}^p+ \sum_{i \in I} \mu_{\xi}(\{x_i \}) \leq \vertii{\partial^s_{\xi} f}_{L^p}^p+ \mu_{\xi}(\R^N) \leq  \liminf_n \vertii{\partial^s_{\xi} f_n}_{L^p}^p
\ee
by \eqref{cl CC}. 
Combining \eqref{limsup} and \eqref{fatou weak}, we have
\bes
\limsup_n \int_{\mathbb{S}^{N-1}}\vertii{\partial_{\xi}^s f_n}_{L^p}^{-N/s} \ d\mathscr{H}^{N-1}(\xi) \leq \int_{\mathbb{S}^{N-1}}  \left(\vertii{\partial^s_{\xi} f}_{L^p}^p+ \sum_{i \in I} \mu_{\xi}(\{x_i \})\right)^{-N/sp} \ d\mathscr{H}^{N-1}(\xi), 
\ees
and, raising the last inequality to the $-sp/N$ power, we find
\bes
\liminf_n\mathscr{E}_{s,p}(f_n)^p=S^p \geq \left(\int_{\mathbb{S}^{N-1}}  \left(\vertii{\partial^s_{\xi} f}_{L^p}^p+ \sum_{i \in I} \mu_{\xi}(\{x_i \})\right)^{-N/sp} \ d\mathscr{H}^{N-1}(\xi) \right)^{-sp/N}.
\ees
Therefore, by the reverse Minkowski's inequality \eqref{reverse minko} and \eqref{ineq CC}, we have
\be\label{l com}
S^p \geq \mathscr{E}_{s,p}(f)^p + \sum_{i \in I}S^{p} \nu_i^{p/q}\geq S^{p}\left(\left(\int_{\R^N} \verti{f}^q \ dx\right)^{p/q} + \sum_{i \in I} \nu_i^{p/q}\right).
\ee
Since $\nu$ is a probability measure \eqref{proba}, $\int_{\R^N} \verti{f}^q \ dx + \sum_{i \in I} \nu_i=1$. Note that condition \eqref{kill v} implies that $\nu_i <1$, for each $i \in I$. Hence, if there exists $i \in I$ such that $\nu_i>0$, we find that
\bes
\left(\left(\int_{\R^N} \verti{f}^q \ dx \right)^{p/q}+ \sum_{i \in I} \nu_i^{p/q}\right) > \bigg( \int_{\R^N} \verti{f}^q \ dx + \sum_{i \in I} \nu_i\bigg)^{p/q}=1.
\ees
since $p/q<1$. By \eqref{l com}, this yields a contradiction. 

We may conclude that $\nu_i=0$, for each $i \in I$, and that $\vertii{f}_{L^q}=1$. 
As in the fractional case, by the affine Sobolev inequality \eqref{aff sob} and Lemma \ref{weak lsc}, we have
\bes
S \leq \mathscr{E}_{s,p}(f) \leq \liminf_n \mathscr{E}_{s,p}(f_n) = S,
\ees
  hence $f$ is an optimizer for the affine Sobolev inequality. 
 
 Moreover, since $\mathscr{E}_{s,p}(f_n)\to \mathscr{E}_{s,p}(f)$ and $f_n \rightharpoonup f$ in $\dot{W}^{s,p}$, we may conclude that $f_n \to f$ in $\dot{W}^{s,p}$, by Lemma \ref{not in 0}.
\end{proof}

\appendix
\section{Proof of Lemma \ref{non zero ex} and application to the existence of extremal functions in higher-order fractional Sobolev inequalities}\label{A}
In \cite{zhang2021optimizers}, Zhang studies the existence of extremal functions in the fractional Sobolev inequality when $0<s<1$ and $1<p<\infty$. His proof yields the following result. 
\begin{theo}\label{classic ex}
     Let $s>0$ be non-integer and $1 \leq p<\infty$ be such that $sp<N$. There exists $f \in \mathring{W}^{s,p}$ such that $\vertii{f}_{L^q}=1$ and $\verti{f}_{W^{s,p}}= \inf\{ \verti{g}_{W^{s,p}}; \, \vertii{g}_{L^q}=1 \}$.
\end{theo}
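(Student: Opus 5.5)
The plan is to run the argument of Section \ref{frac} again, with the isotropic semi-norm $\verti{\cdot}_{W^{s,p}}$ in place of $\mathscr{E}_{s,p}$. This removes every affine difficulty: no direction-uniform Brezis--Lieb lemma and no reverse Minkowski inequality are needed, and no $\text{SL}_N$ normalization is required.

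\textbf{Step 1 (setup).} Let $S_0\coloneq\inf\{\verti{g}_{W^{s,p}};\ \vertii{g}_{L^q}=1\}$. By \eqref{sob}, $S_0>0$. Take a minimizing sequence $(f_n)\subset\mathring{W}^{s,p}$ with $\vertii{f_n}_{L^q}=1$ and $\verti{f_n}_{W^{s,p}}\to S_0$; it is bounded in $\mathring{W}^{s,p}$.

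\textbf{Step 2 (nontrivial weak limit).} I first prove Lemma \ref{non zero ex}, following \cite[Lemma 3.2]{zhang2021optimizers}. By \eqref{improved} and the bounds $\vertii{f_n}_{L^q}=1$, $\sup_n\verti{f_n}_{W^{s,p}}<\infty$, we get $\inf_n\verti{f_n}_{B^{-N/q}_{\infty,\infty}}\geq c>0$. So there are $A_n>0$ and $x_n$ with
\[
A_n^{-N/q}\verti{\mathscr{F}^{-1}(\Psi(A_n^{-1}\cdot)\mathscr{F}f_n)(-x_n)}\geq c/2 .
\]
Set $R_n\coloneq A_n^{-1}$ and rescale to $g_n\coloneq R_n^{-N/q}f_n((\cdot-x_n)/R_n)$, adjusting signs so that the rescaling carries $(A_n,-x_n)$ to $(1,0)$. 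Both $\vertii{\cdot}_{L^q}$ and $\verti{\cdot}_{W^{s,p}}$ are invariant under this rescaling, since $N/q=N/p-s$. We get
\[
\verti{\textstyle\int g_n\,\check\Psi}\geq c/2,\qquad \check\Psi\coloneq\mathscr{F}^{-1}\Psi\in\mathscr{S}\subset L^{q'} .
\]
By Lemma \ref{weak comp}, some subsequence satisfies $g_n\rightharpoonup g$ in $\mathring{W}^{s,p}$, hence weakly in $L^q$. The displayed bound then passes to the limit, so $\verti{\int g\,\check\Psi}\geq c/2$ and $g\neq0$. The only delicate point is bookkeeping of the sign and scale conventions; no step uses $s<1$ or $p>1$.

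\textbf{Step 3 (splitting).} Replace $f_n$ by $g_n$ and write $f_n\rightharpoonup f\neq0$. I apply the Brezis--Lieb lemma to $F_n(x,h)\coloneq\Delta_h^{\lfloor s\rfloor+1}f_n(x)\,\verti{h}^{-s-N/p}$. This sequence is bounded in $L^p(\R^N\times\R^N)$. After a subsequence, $f_n\to f$ a.e. (from $L^p_{loc}$ convergence), so $F_n\to F$ a.e. This gives
\[
\verti{f_n}_{W^{s,p}}^p=\verti{f_n-f}_{W^{s,p}}^p+\verti{f}_{W^{s,p}}^p+o(1).
\]
Similarly $1=\vertii{f_n-f}_{L^q}^q+\vertii{f}_{L^q}^q+o(1)$.

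\textbf{Step 4 (strict subadditivity).} Apply $\verti{\cdot}_{W^{s,p}}\geq S_0\vertii{\cdot}_{L^q}$ to $f_n-f$ and to $f$. This gives
\[
S_0^p\geq S_0^p\bigl(\limsup_n\vertii{f_n-f}_{L^q}^p+\vertii{f}_{L^q}^p\bigr).
\]
Suppose $\limsup_n\vertii{f_n-f}_{L^q}>0$. Since $f\neq0$ and $p/q<1$, inequality \eqref{n fact} makes the right-hand side strictly larger than $S_0^p$, a contradiction. Hence $\vertii{f_n-f}_{L^q}\to0$ and $\vertii{f}_{L^q}=1$. Fatou's lemma (weak lower semicontinuity) gives $\verti{f}_{W^{s,p}}\leq\liminf_n\verti{f_n}_{W^{s,p}}=S_0$, while the definition of $S_0$ gives the reverse inequality, so $\verti{f}_{W^{s,p}}=S_0$.

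I expect the main obstacle to be Step 2, specifically for $p=1$ and $s>1$. There one must check that \eqref{improved} still applies, through Lemma \ref{paley}, and that the weak limit argument does not secretly rely on reflexivity. It does not: it uses only Lemma \ref{weak comp} together with the fact that $\check\Psi\in L^{q'}$.
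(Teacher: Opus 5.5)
Your proposal is correct and follows the paper's proof of Theorem \ref{classic ex} step for step. It uses Zhang's improved-Sobolev argument (Lemma \ref{non zero ex}) to get a nonzero weak limit after translation and dilation, the Brezis--Lieb lemma for $(h,x)\mapsto \Delta_h^{\lfloor s\rfloor+1}f_n(x)\verti{h}^{-s-N/p}$ and for $\vertii{f_n}_{L^q}^q$, the strict inequality \eqref{n fact} to rule out loss of mass, and then lower semicontinuity.

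The one slip is the explicit rescaling in Step 2, which dilates in the wrong direction. To send the frequency scale $A_n$ and the point $-x_n$ to $(1,0)$ you need $R_n=A_n$ with centre $A_nx_n$, i.e.\ $g_n=A_n^{-N/q}f_n(\cdot/A_n-x_n)$ up to a reflection, not $R_n=A_n^{-1}$. This is bookkeeping only and does not affect the argument.
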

We present this proof for the sake of completeness, and start with the one of Lemma \ref{non zero ex}, relying on the argument in \cite[Lemma 3.2]{zhang2021optimizers}.
\begin{proof}[Proof of Lemma \ref{non zero ex}]
    Since $(f_n)$ is bounded in $\mathring{W}^{s,p}$ and $\inf_n \vertii{f_n}_{L^q}>0$, using \eqref{improved}, we find that
    \be\label{inf bd}
     \alpha \coloneq \inf_n \sup_{A>0}A^{-N/q} \vertii{\mathscr{F}^{-1}\left(\Psi(A^{-1} \cdot) \mathscr{F}f_n\right)}_{L^{\infty}} >0,
    \ee
using the notation introduced in \eqref{improved}. Set $\varphi \coloneq \mathscr{F}^{-1}\Psi$ (note that $\varphi \in \mathscr{S}$). By \eqref{inf bd}, for each $n \in \N$, there exist $A_n>0$ and $x_n \in \R^N$, such that
\be\label{lbnd}
A_n^{N-N/q}\verti{(\varphi(A_n\cdot) \ast f_n) (x_n)} \geq \alpha/2. 
\ee
But 
\be\label{formula}
(A_n^{N-N/q}\varphi(A_n\cdot) \ast f_n) (x_n)= \int_{\R^N} \varphi(y) A_n^{-N/q}f_n(x_n-y/A_n) \ dy= \int_{\R^N} \varphi(y) \tilde{f_n}(y) \ dy, 
\ee
where $\tilde{f_n}(x) \coloneq A_n^{-N/q} f_n(x_n-x/A_n),$ for each $x\in \R^N$.

We have $\verti{\tilde{f_n}}_{W^{s,p}}=\verti{f_n}_{W^{s,p}}$, for each $n\in \N$, therefore $(\tilde{f_n})$ is bounded in $\mathring{W}^{s,p}$ and there exists $f \in \mathring{W}^{s,p}$ such that $f_n \rightharpoonup f \in \mathring{W}^{s,p}$ and in $L^q$, up to a subsequence, by Lemma \ref{weak comp}. We still denote by $(f_n)$ this subsequence, and we have
\be\label{limit}
\int_{\R^N} \varphi(y) f_n(y) \ dy \to \int_{\R^N} \varphi(y) f(y) \ dy, 
\ee
since $\varphi \in \mathscr{S}$. Hence, combining \eqref{lbnd}, \eqref{formula} and \eqref{limit}, we find that 
\bes
\verti{\int_{\R^N} \varphi(y) f(y) \ dy} \geq \alpha/2,
\ees
which implies that $f\neq 0$.
\end{proof}
We now turn to the 
\begin{proof}[Proof of Theorem \ref{classic ex}]
    Let $(f_n) \subset \mathring{W}^{s,p}$ be such that $\vertii{f_n}_{L^q}=1$ and 
    \bes \verti{f_n}_{W^{s,p}} \to I\coloneq \inf \{ \verti{g}_{W^{s,p}}; \, \vertii{g}_{L^q}=1 \}
, \ \text{as} \ n \to \infty.
    \ees
    Starting from $(f_n)$, we may define a sequence $(\tilde{f_n})$ such that $\|\tilde{f_n}\|_{L^q}=1$, $\|\tilde{f_n}\|_{W^{s,p}} \to I$ and $\tilde{f_n} \rightharpoonup f \neq 0$ in $\mathring{W}^{s,p}$, as $n \to \infty$ (as in \eqref{def til}). We still denote it by $(f_n)$.  
    
    By further extraction, we may also assume that $f_n \to f$ a.e., and find
    \be\label{BL1}
    \verti{f_n}_{W^{s,p}}^p= \verti{f_n-f}_{W^{s,p}}^p+\verti{f}^p_{W^{s,p}} + o(1),
    \ee
by an application of the Brezis-Lieb lemma to the sequence
\bes
\R^N \times \R^N \ni (h,x) \mapsto \frac{\Delta_{h}^{\lfloor s \rfloor +1} f_n(x)}{\verti{h}^{s+N/p}}.
\ees
Another application of the Brezis-Lieb lemma shows that
\bes\label{BL2}
\vertii{f_n}_{L^q}^q= \vertii{f_n-f}_{L^q}^q +\vertii{f}_{L^q}^q + o(1)
\ees
and thus that 
\be\label{lim fact}
\lim_n \vertii{f_n-f}_{L^q}^q +\vertii{f}_{L^q}^q=1.
\ee
By \eqref{BL1} and the Sobolev inequality, we have
\bes
\ba
\verti{f_n}_{W^{s,p}}^p &\geq I^p(\vertii{f_n-f}_{L^q}^p + \vertii{f}_{L^q}^p) +o(1).
\ea
\ees
Hence, if $\limsup_n \vertii{f_n -f}_{L^q} >0$, we find
\bes
\ba
I^p = \lim_n \verti{f_n}_{W^{s,p}}^p &\geq  I^p\left(\limsup_n \vertii{f_n-f}_{L^q}^p + \vertii{f}_{L^q}^p\right)
\\ &> I^p\left(\limsup_n \vertii{f_n-f}_{L^q}^q + \vertii{f}_{L^q}^q\right)^{p/q}
\\ & = I^p,
\ea
\ees
and a contradiction. Here, we rely on \eqref{n fact}, $\vertii{f}_{L^q}>0$, and $\limsup_n \vertii{f_n -f}_{L^q} >0$ for the second inequality, while we use \eqref{lim fact} to obtain the equality.  

We may now conclude, arguing as in the fractional case of the proof of Theorem \ref{exist sob}. \qedhere
\end{proof}

\bibliography{ext} 

\end{document}